\documentclass[11pt]{amsart}
\usepackage{amssymb,amsmath,amsthm}
\usepackage{verbatim}
\usepackage{enumerate}
\usepackage{graphicx}
\usepackage{epsfig}
\usepackage{color, soul}
\usepackage[all]{xy}
\usepackage{wasysym}
\usepackage{url}
\usepackage{mathrsfs}
\usepackage{cancel}

\voffset=-1cm \oddsidemargin 0cm \evensidemargin 0cm \textheight
22.5cm \textwidth 16.5cm

\DeclareMathOperator{\cb}{cb}

\DeclareMathOperator{\CB}{\mathcal{CB}}
\DeclareMathOperator{\tr}{tr}
\DeclareMathOperator{\proj}{proj}
\DeclareMathOperator{\spa}{span}

\DeclareMathOperator{\MIN}{MIN}
\DeclareMathOperator{\MAX}{MAX}
\DeclareMathOperator{\OH}{OH}

\DeclareMathOperator{\vN}{vN}

\newcommand{\tcc}{\tau_{\mathrm{cc}}}

\newcommand{\B}{ \mathcal{B} }

\newcommand{\n}[1]{ \left\|#1\right\| }

\newcommand{\N}{{\mathbb{N}}}

\newcommand{\C}{{\mathbb{C}}}

\newcommand{\pair}[2]{{\langle #1, #2 \rangle}}
\newcommand{\mpair}[2]{{\langle\langle #1, #2 \rangle\rangle}}

\newcommand\restr[2]{{
  \left.\kern-\nulldelimiterspace 
  #1 
  \vphantom{\big|} 
  \right|_{#2} 
  }}

\newcommand{\ORNP}[1]{{$#1$-tensor surjectivity property}}

\newtheorem{theorem}{Theorem}[section]
\newtheorem{lemma}[theorem]{Lemma}
\newtheorem{definition}[theorem]{Definition}
\newtheorem{corollary}[theorem]{Corollary}
\newtheorem{proposition}[theorem]{Proposition}
\newtheorem{remark}[theorem]{Remark}

\numberwithin{equation}{section}


\def\ker{{\rm ker\, }}


\begin{document}

\title{The $p$-Operator Approximation Property}

\author[J.A. Ch\'avez-Dom\'inguez]{Javier Alejandro Ch\'avez-Dom\'inguez}
\address{Department of Mathematics, University of Oklahoma, Norman, OK 73019-3103,
USA} \email{jachavezd@ou.edu}

\author[V. Dimant]{Ver\'onica Dimant}
\address{Departamento de Matem\'{a}tica y Ciencias, Universidad de San
Andr\'{e}s, Vito Dumas 284, (B1644BID) Victoria, Buenos Aires,
Argentina and CONICET} \email{vero@udesa.edu.ar}

\author[D. Galicer]{Daniel Galicer}
\address{Departamento de Matem\'{a}ticas y Estad\'{\i}stica, Universidad T. Di Tella, Av. Figueroa Alcorta 7350 (1428), Buenos Aires, Argentina and CONICET.
On leave from Departamento de Matem\'{a}tica, Facultad de Ciencias Exactas y Naturales, Universidad de Buenos Aires, (1428) Buenos Aires,
Argentina} \email{daniel.galicer@utdt.edu}
\date{}

\thanks{The first-named author was partially supported by NSF grants DMS-1900985 and DMS-2247374. The second-named author was partially supported by CONICET PIP 11220200101609CO and ANPCyT PICT 2018-04104. The third-named author was partially supported by CONICET-PIP 11220200102366CO
and ANPCyT PICT 2018-4250.}

\maketitle

\begin{abstract}
We study a notion analogous to the $p$-Approximation Property ($p$-AP) for Banach spaces, within the noncommutative context of operator spaces. Referred to as the $p$-Operator Approximation Property ($p$-OAP), this concept is linked to the ideal of operator $p$-compact mappings. We present several equivalent characterizations based on the density of finite-rank mappings within specific spaces for different topologies, and also one in terms of a slice mapping property.
Additionally, we investigate how this property transfers from the dual or bidual to the original space.
As an application, the $p$-OAP for the reduced $C^*$-algebra of a discrete group implies that operator $p$-compact Herz-Schur multipliers can be approximated in $\cb$-norm by finitely supported multipliers.
\end{abstract}

\section{Introduction}

The approximation property, prominently featured in the ``Scottish Book'', has significantly impacted modern mathematical theory.  A landmark moment came in the seventies when Enflo \cite{enflo1973counterexample} provided an example of a Banach space without the approximation property, solving a long-standing open question. 

Many variations of the approximation property have been considered in the literature throughout the years, the interested reader can find a nice account in the survey \cite{MR1863695}.
In many cases, these variations of the AP are \emph{stronger} properties where one requires additional conditions for the finite-rank maps used to approximate the identity. Some common examples of such conditions are being contractive, uniformly bounded, or commuting.

To give a taste of the fundamental ideas, recall that relatively compact subsets of a metric space are very useful and easy to handle because, up to any small error $\varepsilon>0$, they can be approximated by finite sets.
The approximation property (AP) of a Banach space $X$ is a functional analytic version of this idea of approximating by ``finite pieces''. This is already present in the most standard definition, i.e. that the identity map on $X$ can by approximated  by finite-rank operators uniformly on compact sets,
and there is an equivalent characterization where the analogy is arguably more transparent: a Banach space $X$ has AP if and only if for any Banach space $Y$, any compact operator $T : Y \to X$ can be approximated in operator norm by finite-rank operators.
There are many other equivalent characterizations of the AP, including the density of finite-rank mappings in $(\mathcal{L}(Y,X), \tau_c)$, the space of continuous linear operators equipped with the topology of uniform convergence on compact sets.

Inspired by Grothendieck's characterization of relatively compact sets \cite{grothendieck1955produits}, namely those enclosed within the convex hull of a null sequence of vectors, Sinha and Karn \cite{Sinha-Karn} introduced the stronger notion of $p$-compact sets. Loosely speaking, these sets correspond to Grothendieck's characterization above but replacing the null sequences in a Banach space by the smaller collection of the norm $p$-summable sequences.  This conceptual leap naturally gave rise to the notions of $p$-compact mappings and the $p$-approximation property ($p$-AP), which can be seen as a nuanced variation of the classical approximation property that focuses on the $p$-compact sets.
Since there are \emph{fewer} $p$-compact sets than compact ones,  
the $p$-approximation property is \emph{weaker} than the standard approximation property, in marked constrast with the aforementioned other variations of the AP. As Pietsch aptly stated in his book \cite{pietsch2007history}, \textit{``Life in Banach spaces with certain approximation property is much easier''}, so it is therefore intriguing to study this type of weaker properties.
For example, let us describe an interesting give-and-take that happens in this setting. As recalled above, the standard AP corresponds to being able to approximate, in operator norm, any compact mapping using finite-rank ones.
The $p$-approximation property, being weaker, is enjoyed by more spaces but yields a weaker conclusion: now we can only guarantee that the $p$-compact mappings can be approximated in operator norm by finite-rank ones.
Therefore, if one is working with a space which does not have the classical AP but it does have the $p$-AP, not everything is lost: the standard very useful machinery of approximating with finite-rank mappings is still available, just paying the price of having a smaller family of operators which can be approximated.

In the operator space setting, the history of approximation properties is similar the Banach space situation.
The most basic of such properties, corresponding to Grothendieck's AP, is the operator approximation property (OAP). The OAP is intimately connected to the operator space structure, and, as Pisier points out in his book \cite[Remark, page 85]{Pisier-Asterisque}, having the AP is entirely irrelevant when it comes to this stronger type of approximation. Indeed, there is a nice construction of Arias \cite{Arias} of an operator space isometric to a separable Hilbert space but lacking the OAP.

A variety of strengthenings of the OAP such as the strong operator approximation property (SOAP) and the completely bounded approximation property (CBAP) are standard in the theory.
Even stronger notions of approximation such as nuclearity are very useful in operator algebras, see e.g. the book \cite{Brown-Ozawa}, or \cite{Junge-Ruan,MR2106828}.

The purpose of the present work is to take a step in the opposite direction and study approximation properties which are weaker than the OAP, and are thus enjoyed by more operator spaces.
The original definition of the OAP 
\cite{Effros-Ruan-Approximation} had many parallels with various approaches to the classical AP, most notably the ones related to tensor products, but was lacking a corresponding notion of ``noncommutative compact set''.
The answer was provided by Webster \cite{Webster,webster1998matrix}, who found the appropriate notion (see Section \ref{Section preliminares} for more details).

Modifying Webster's noncommutative notion of compatness in a manner analogous to the $p$-compact sets of Sinha and Karn, the concepts of $p$-compactness and $p$-compact mappings in operator spaces were introduced and explored by the authors in \cite{ChaDiGa-Operator-p-compact, chavez2024revisiting}.
These concepts naturally lead to the introduction of a $p$-approximation property in the non-commutative setting, which is the main subject of the present work and which will be referred to as the $p$-operator approximation property ($p$-OAP).  The $p$-OAP is a new notion that complements the existing literature on operator space approximation properties.
This property, like the classical approximation property, involves approximating the identity by a net of finite-rank operators in a topology defined by seminorms linked to operator $p$-compact matrix sets.
It is worth mentioning that there was already another notion of non-commutative approximation property introduced by Yew, which is relative to a specific operator space. We relate our new definition to this earlier one.
Our definition of $p$-OAP is a generalization of the classical $p$-AP, in the sense that if an operator space has the former then it must have the latter as a Banach space (Proposition \ref{prop-p-AOP-implies-p-AP}).

If a matrix set is operator $q$-compact, then it is also operator $p$-compact for $p>q$ \cite{chavez2024revisiting}. This implies that as the parameter $p$ decreases, there are fewer operator $p$-compact matrix sets. Consequently, achieving the $p$-OAP for a lower $p$ becomes easier.
 Similar to what happens in the classical setting, we show that every operator space has the $2$-OAP (Corollary \ref{cor-2-OAP}).

In this article, we present various equivalences for a space to have the $p$-OAP, relating different topologies (Propositions \ref{prop-uniform-convergence-over-p-compact} and \ref{prop-p-OAP-density-in-CB}, Theorems \ref{thm-characterizations-p-OAP}, \ref{thm-p-OAP-and-quasi-p-nuclear}, and \ref{thm-p-OAP-and-weakly-p-compact}).
In particular, we show that $V$ has the $p$-OAP if and only if any operator $p$-compact mapping into $V$ can be approximated, in the completely bounded norm, by finite-rank mappings. 
 Recall that in the realm of Banach spaces, compactness for a set is characterized by multiple equivalent definitions. In our context, each of these definitions has its counterpart, yet their interrelations are not straightforward, adding complexity to the field.
In the context of multipliers on the reduced $C^*$-algebra of a discrete group, it is a well-known general fact that if a multiplier can be approximated by a finite-rank map then it can in fact be approximated by a finitely supported multiplier (see Lemma \ref{lemma-from-finite-rank-to-finitely-supported} below for a precise statement).
Using these ideas we prove a variation of \cite[Cor. 3.9]{He-Todorov-Turowska}, showing that the $p$-OAP for the reduced $C^*$-algebra of a discrete group implies that operator $p$-compact Herz-Schur multipliers can be approximated in $\cb$-norm by finitely supported multipliers (Corollary \ref{cor-multipliers}).

Recall that Tomiyama's notion of slice mapping properties \cite{Tomiyama}, which has significant applications in operator algebra theory, has been used by Effros and Ruan to characterize both the OAP and its strong version \cite[Cor. 11.3.2]{Effros-Ruan-book}. We prove an analogous result for the $p$-OAP by introducing a variant of the slice mapping property (Theorem \ref{thm-slice-mapping}).
We then relate this latter notion to a variant of the relative Fubini product (Proposition \ref{prop-Fubini-as-kernel}), which allows us to understand the $p$-OAP in terms of a condition involving short exact sequences which is reminiscent of exactness (Corollary \ref{cor-p-OAP-with-exact-sequences}). This approach yields an alternative proof, conceptually clearer, of the fact that every operator space has $2$-OAP (Theorem \ref{th-homogeneous-hilbertian}).

Another well-known property in the classical setting is  that a dual space $X'$ having the approximation property (AP) forces the space $X$ to have it, and the same holds true for the $p$-AP \cite{Choi-Kim}.
In the non-commutative context, a similar result is valid for the OAP \cite[Cor. 11.2.6]{Effros-Ruan-book}. Hence, the question arises about what happens for the $p$-OAP. We demonstrate a transference result, albeit under an additional assumption (Theorem \ref{thm-transfer-p-OAP-from-dual}).
Moreover, one would expect at least that the $p$-OAP is inherited from the bidual $V''$ to $V$. For this, one needs to relate finite-dimensional subspaces of the bidual to those of the ambient space, which can be done  by requiring a hypothesis of local reflexivity (Corollary \ref{cor: tranference from V''}).

\section{Notation and preliminaries} \label{Section preliminares}

We only assume familiarity with the basic theory of operator spaces; the books \cite{Pisier-OS-theory} and \cite{Effros-Ruan-book} are excellent references. Our notation follows closely that from \cite{Pisier-Asterisque, Pisier-OS-theory}, with one notable exception: we denote the dual of a space $V$ by $V'$.

Throughout the article, $V$ and $W$ denote operator spaces. For each $n$, $M_n(V)$ represents the space of $n\times n$ matrices with elements from $V$. We denote the $n$-amplification of a linear mapping $T:V\to W$ as $T_n:M_n(V)\to M_n(W)$. The space of completely bounded linear mappings from $V$ to $W$ is denoted by $\CB(V,W)$, with the subspace of finite-rank mappings represented by $\mathcal{F}(V,W)$. We write $M_\infty(V)$ to indicate the space of all infinite matrices with coefficients in $V$ such that the truncated matrices are uniformly bounded. Our notation for the minimal and projective operator space tensor products is respectively $\otimes_{\min}$ and $\otimes_{\proj}$.
The symbol $\widehat\otimes$ indicates the completion of the tensor product under consideration.
The canonical completely isometric embedding into the bidual is denoted by $\iota_V : V \to V''$.

A linear map $Q : V \to W$ between operator spaces is called a \emph{complete 1-quotient}  if it is onto and the associated map from $V/\ker(Q)$ to $W$ is a completely isometric isomorphism.
In \cite[Sec. 2.4]{Pisier-OS-theory}, these maps are referred to as complete metric surjections. It is proven therein that a linear map $T:V \to W$ is a complete 1-quotient if and only if its adjoint $T' : W' \to V'$ is a completely isometric embedding.

The $\ell_p$ spaces are essential in defining and studying $p$-compactness in Banach spaces. The noncommutative analog of $\ell_p$ is the Schatten class $\mathcal{S}_p$. For $1 \le p < \infty$, $\mathcal{S}_p$ comprises all compact mappings $T$ on $\ell_2$ such that $\tr |T|^p < \infty$, equipped with the norm $\n{T}_{\mathcal{S}_p} = \big( \tr |T|^p \big)^{1/p}$. For $p=\infty$, $\mathcal{S}_\infty$ denotes the space of all compact mappings on $\ell_2$ with the operator norm.
The analogy  is shown by identifying $\ell_p$  with the diagonal mappings in $\mathcal{S}_p$,  noting that any two diagonal mappings commute.
The operator space structure on $\mathcal{S}_p$ is provided through Pisier's theory of complex interpolation for operator spaces \cite[Sec. 2]{Pisier-interpolation},  \cite[Sec. 2.7]{Pisier-OS-theory}. Noting that
 $\mathcal{S}_\infty$ has a canonical operator space structure as it is a $C^*$-algebra  \cite[p. 21]{Effros-Ruan-book} and $\mathcal{S}_1'= \mathcal{B}(\ell_2)$
 also possesses a natural operator space structure  via duality, it is possible to give an operator space structure to each intermediate $\mathcal{S}_p$ space. 
As remarked in \cite[p. 141]{Pisier-OS-theory}, this abstract approach realizes $\mathcal{S}_p$ as a subspace of a $\mathcal B(H)$ space in a highly nonstandard way.
More generally, an operator space $V$ yields a $V$-valued version of $\mathcal{S}_p$, denoted by $\mathcal{S}_p[V]$:
$\mathcal{S}_\infty[V]$ is the minimal operator space tensor product of $\mathcal{S}_\infty$ and $V$,
$\mathcal{S}_1[V]$ is the operator space projective tensor product of $\mathcal{S}_1$ and $V$,
and once again in the case $1 < p < \infty$ we define $\mathcal{S}_p[V]$ via complex interpolation between $\mathcal{S}_\infty[V]$ and $\mathcal{S}_1[V]$  \cite{Pisier-Asterisque}.
For $1 < p \le \infty$, the dual of $\mathcal{S}_p[V]$ can be canonically identified with $\mathcal{S}_{p'}[V']$, where $p'$ satisfies $1/p +  1/p' = 1$ \cite[Cor. 1.8]{Pisier-Asterisque}.
In the discussion above, if we replace $\mathcal{S}_1$ by the space $\mathcal{S}_1^n$ of $n\times n$ matrices with the trace norm, and $\mathcal{S}_\infty$ by the space $M_n$, we can analogously construct operator spaces $\mathcal{S}_p^n$ and $\mathcal{S}_p^n[V]$.
We will often consider the elements of the spaces $\mathcal{S}_p \widehat\otimes_{\min} V$ and $\mathcal{S}_p[V]$ as infinite matrices with entries in $V$.
In the first case the meaning is clear: since $\mathcal{S}_p \widehat\otimes_{\min} V$ completely isometrically embeds into $\CB(\mathcal{S}_p',V)$, we identify $v \in \mathcal{S}_p \widehat\otimes_{\min} V$ with the infinite $V$-valued matrix that arises from applying $v$ (considered as a map $\mathcal{S}_p' \to V$) to the matrix units in $\mathcal{S}_p'$.
For $v \in \mathcal{S}_p[V]$ the identification as an infinite matrix is not immediately clear, since $\mathcal{S}_p[V]$ was constructed using complex interpolation.
The reader is invited to check \cite[pp. 18--20]{Pisier-Asterisque} for further details.

A \emph{mapping ideal} $(\mathfrak{A},\mathbf{A})$ is an assignment, for each pair of operator spaces $V,W$, of a linear space $\mathfrak{A}(V,W) \subseteq \CB(V,W)$  together with an operator space structure $\mathbf{A}$ on $\mathfrak{A}(V,W)$ such that
\begin{enumerate}[(a)]
\item The identity map $\mathfrak{A}(V,W) \to \CB(V,W)$ is a complete contraction.
\item For every $v'\in M_n(v')$ and $w\in M_m(w)$ the mapping $v'\otimes w$ belongs to $M_{nm}(\mathfrak{A}(V,W))$ and $\mathbf{A}_{nm}(v'\otimes w)=\|v'\|_{M_n(V')} \|y\|_{M_m(W)}$.
\item The ideal property: whenever $T \in M_n(\mathfrak{A}(V,W))$, $R \in \CB(V_0,V)$ and $S \in \CB(W,W_0)$, it follows that $S_n \circ T \circ R$ belongs to $ M_n(\mathfrak{A}(V_0,W_0))$ with
$$
\mathbf{A}_n( S_n \circ T \circ R ) \le \n{S}_{\cb} \mathbf{A}_n(T) \n{R}_{\cb}.
$$
\end{enumerate}
Note that this is the definition of \cite[Def. 7.1.1]{ChaDiGa-tensor-norms}, which is stronger than that of \cite[Sec. 12.2]{Effros-Ruan-book} (because of the item (b)).
All of the mapping ideals considered in the present paper have been checked to satisfy this stronger definition, see \cite[Chap. 7]{ChaDiGa-tensor-norms}.

We now review some fundamental definitions focusing on operator approximation properties. An operator space \(V\) has the \emph{Operator Approximation Property} (OAP) if there exists a net of finite-rank mappings $T_{\alpha} \in \CB(V,V)$ converging to the identity mapping $Id_V$ in the stable point-norm topology (i.e., the weakest topology in which the seminorms $$T \mapsto \Vert Id_{ \mathcal S_{\infty}}\otimes T(v) \Vert, \;\; v \in \mathcal S_{\infty} \widehat{\otimes}_{\min} V$$ are continuous).
Equivalently, for each $v_1, \dots, v_n \in \mathcal S_{\infty} \widehat{\otimes}_{\min} V$ and $\varepsilon >0$, there is a finite-rank mapping $T\in \CB(V,V)$ such that
$$
\Vert Id_{ \mathcal S_{\infty}} \otimes T(v_k) - v_k \Vert < \varepsilon,
$$
for all $k=1, \dots, n$.

An operator space has the \emph{Strong Operator Approximation Property} (SOAP) if for any $v_1, \dots, v_n \in \mathcal{B}(\ell_2) \widehat{\otimes}_{\min} V$ and $\varepsilon >0$, there is a finite-rank mapping $T\in \CB(V,V)$ such that
$$
\Vert Id_{\mathcal B(\ell_2)} \otimes T(v_k) - v_k \Vert < \varepsilon,
$$
for all $k=1, \dots, n$.

An operator space \(V\) has the \emph{Completely Bounded Approximation Property} (CBAP) if there exists a constant \(C \geq 1\) and a net of finite-rank mappings \(T_{\alpha} \in \CB(V,V)\) such that \(\|T_{\alpha}\|_{\cb} \leq C\) and \(T_{\alpha} \to \text{id}_V\) pointwise.

A \emph{matrix set $\mathbf{K} = (K_n)_n$ over an operator space $V$} is a sequence of subsets $K_n \subseteq M_n(V)$ for each $n\in\N$. A typical example of a matrix set over $V$ is the \emph{closed matrix unit ball of $V$} given by
$\mathbf{B}_V = \big( B_{M_n(V)} \big)_n$. For a linear map $T : V \to W$ between operator spaces, the expression $T(\mathbf{K})$ denotes the matrix set $\big(T_n(K_n)\big)_n$ where $T_n$ is the $n$-th amplification of $T$. For two matrix sets $\mathbf{K} = (K_n)_n$ and $\mathbf{L} = (L_n)_n$ defined over the same operator space $V$, we denote $\mathbf{K} \subseteq \mathbf{L}$ to signify that $K_n \subseteq L_n$ holds for all $n \in \mathbb{N}$.

Note that the language of matrix sets allows us to more transparently see the analogy between bounded and completely bounded linear maps. A linear map $T : X \to Y$ between Banach spaces is bounded with norm at most $C$ if and only if $T(B_X) \subseteq C B_Y$, whereas a linear map $T : V \to W$ between operator spaces is completely bounded with completely bounded norm at most $C$ if and only if $T(\mathbf{B}_V) \subseteq C \mathbf{B}_W$.

\subsection{Operator $p$-compactness}

In this subsection, we introduce the notion of operator $p$-compact mappings. We first recall \cite[Lem. 3.2]{chavez2024revisiting} which says that each element of $\mathcal{S}_p[V]$ can be seen as a completely bounded mapping from $\mathcal{S}_p'$ to $V$.

\begin{lemma}\label{lemma-Theta-well-defined}
The formal identity $\mathcal{S}_p[V] \to \mathcal{S}_p\widehat\otimes_{\min} V \hookrightarrow \CB(\mathcal{S}_p',V)$ is a contractive injection. Moreover, the image of $v \in \mathcal{S}_p[V]$ is the map $\Theta^v : \mathcal{S}_p' \to V$ given by $(\alpha_{ij}) \mapsto \lim_{N\to\infty}\sum_{i,j=1}^{N} \alpha_{ij} v_{ij}$.
\end{lemma}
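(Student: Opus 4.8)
The second map, $\mathcal{S}_p\widehat\otimes_{\min}V\hookrightarrow\CB(\mathcal{S}_p',V)$, is the canonical completely isometric embedding of a minimal tensor product into a space of completely bounded maps --- the very one used in Section~\ref{Section preliminares} to view elements of $\mathcal{S}_p\widehat\otimes_{\min}V$ as infinite matrices: it sends an elementary tensor $a\otimes w$ to the rank-one map $\alpha\mapsto\pair{a}{\alpha}w$, where $\pair{a}{\alpha}=\sum_{i,j}a_{ij}\alpha_{ij}$ is the trace duality between $\mathcal{S}_p$ and $\mathcal{S}_p'$, and it realizes $\mathcal{S}_p\widehat\otimes_{\min}V$ as the closed linear span of such maps. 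Hence the whole content of the statement lies in the first arrow. My plan is to (i) produce a jointly completely contractive bilinear map $\mathcal{S}_p[V]\times\mathcal{S}_p'\to V$, equivalently a complete contraction $\Theta\colon\mathcal{S}_p[V]\to\CB(\mathcal{S}_p',V)$, which on $\mathcal{S}_p\otimes V$ agrees with the composite just described; (ii) observe that its range automatically sits inside $\mathcal{S}_p\widehat\otimes_{\min}V$; and (iii) verify injectivity and the series formula.

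For (i) I would interpolate. Recall that $\mathcal{S}_p[V]=(\mathcal{S}_\infty[V],\mathcal{S}_1[V])_{1/p}$ with $\mathcal{S}_\infty[V]=\mathcal{S}_\infty\widehat\otimes_{\min}V$ and $\mathcal{S}_1[V]=\mathcal{S}_1\widehat\otimes_{\proj}V$, while $\mathcal{S}_p'=(\mathcal{S}_1,\mathcal{B}(\ell_2))_{1/p}$ (the dual of $(\mathcal{S}_\infty,\mathcal{S}_1)_{1/p}=\mathcal{S}_p$, using $\mathcal{S}_\infty'=\mathcal{S}_1$ and $\mathcal{S}_1'=\mathcal{B}(\ell_2)$). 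Consider the bilinear map $(v,\alpha)\mapsto\Theta^v(\alpha)$, unambiguously defined on finite matrices $v$ and arbitrary $\alpha$ by $\sum\alpha_{ij}v_{ij}$. At the endpoint $\theta=0$ it is the pairing $\mathcal{S}_\infty\widehat\otimes_{\min}V\times\mathcal{S}_\infty'\to V$, jointly completely contractive by the defining property of the minimal tensor norm; at the endpoint $\theta=1$ it is $\mathcal{S}_1[V]\times\mathcal{B}(\ell_2)\to V$, jointly completely contractive since it factors as $\mathcal{S}_1[V]=\mathcal{S}_1\widehat\otimes_{\proj}V\to\mathcal{S}_1\widehat\otimes_{\min}V\hookrightarrow\CB(\mathcal{B}(\ell_2),V)$ (the canonical projective-to-minimal map followed by the embedding recalled above, now for $p=1$). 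Bilinear complex interpolation at $\theta=1/p$ then yields a jointly completely contractive bilinear map $\mathcal{S}_p[V]\times\mathcal{S}_p'\to V$, i.e.\ the desired complete contraction $\Theta$. By construction $\Theta(a\otimes w)$ is $\alpha\mapsto\pair{a}{\alpha}w$, so in matrix notation $\Theta^v(e_{ij})=v_{ij}$ for every $v\in\mathcal{S}_p[V]$ and every matrix unit $e_{ij}\in\mathcal{S}_p'$.

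For (ii), since $\mathcal{S}_p\otimes V$ is dense in $\mathcal{S}_p[V]$ and $\Theta$ is continuous, $\Theta(\mathcal{S}_p[V])$ lies in the closure of $\Theta(\mathcal{S}_p\otimes V)$, which is precisely the copy of $\mathcal{S}_p\widehat\otimes_{\min}V$ inside $\CB(\mathcal{S}_p',V)$; thus $\Theta$ factors as $\mathcal{S}_p[V]\to\mathcal{S}_p\widehat\otimes_{\min}V\hookrightarrow\CB(\mathcal{S}_p',V)$, the first arrow being the asserted formal identity, which is a (complete) contraction. It remains to prove injectivity. Let $P_N$ denote the truncation to the first $N$ coordinates; the maps $v\mapsto P_NvP_N$ are complete contractions on $\mathcal{S}_p[V]$, and because finite matrices are dense in $\mathcal{S}_p[V]$ for $p<\infty$ they converge strongly to the identity. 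If $\Theta^v=0$, then every entry $v_{ij}=\Theta^v(e_{ij})$ vanishes, so each finite matrix $P_NvP_N\in\mathcal{S}_p^N\otimes V$ --- being determined by its entries --- is zero, whence $v=\lim_N P_NvP_N=0$. Finally, for the formula: $\Theta^{P_NvP_N}$ is literally the map $\alpha\mapsto\sum_{i,j=1}^N\alpha_{ij}v_{ij}$, and from $\n{P_NvP_N-v}_{\mathcal{S}_p[V]}\to0$ together with the boundedness of $\Theta$ we obtain $\sum_{i,j=1}^N\alpha_{ij}v_{ij}=\Theta^{P_NvP_N}(\alpha)\to\Theta^v(\alpha)$ in $V$ for every $\alpha=(\alpha_{ij})\in\mathcal{S}_p'$ --- exactly the stated formula.

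The main obstacle is step (i): one must confirm that $\mathcal{S}_p[V]$, the scale $\mathcal{S}_p'=(\mathcal{S}_1,\mathcal{B}(\ell_2))_{1/p}$, and the bilinear construction above are all mutually compatible with complex interpolation, and --- crucially --- that the two endpoint maps are \emph{completely} contractive, not merely contractive. This amounts to careful bookkeeping on top of Pisier's theory of vector-valued Schatten classes and its duality \cite{Pisier-Asterisque}, \cite[Sec.~2.7]{Pisier-OS-theory}; once $\Theta$ is in hand, the density and truncation arguments of step (iii) are soft. (For $p=\infty$ there is nothing to prove, as $\mathcal{S}_\infty[V]=\mathcal{S}_\infty\widehat\otimes_{\min}V$ by definition.)
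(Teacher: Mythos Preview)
The paper does not prove this lemma at all: it simply recalls it from the authors' earlier work \cite[Lem.~3.2]{chavez2024revisiting}. So there is no in-paper proof to compare against, and your attempt should be judged on its own merits.

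Your argument is sound. The decomposition into (i)--(iii) is natural, and steps (ii) and (iii) are entirely correct and complete: density of finite matrices in $\mathcal{S}_p[V]$ together with uniform contractivity of the truncations $v\mapsto P_NvP_N$ does yield $P_NvP_N\to v$, which immediately gives both injectivity and the series formula. For step (i), bilinear complex interpolation (Calder\'on's theorem in the operator space category) applied to the endpoint pairings
\[
\mathcal{S}_\infty\widehat\otimes_{\min}V\times\mathcal{S}_1\to V \qquad\text{and}\qquad \mathcal{S}_1\widehat\otimes_{\proj}V\times\mathcal{B}(\ell_2)\to V
\]
does produce the required jointly completely contractive map $\mathcal{S}_p[V]\times\mathcal{S}_{p'}\to V$; the operator space version of bilinear interpolation appears in Pisier's work and is exactly what is needed here. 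Your identification $\mathcal{S}_p'=(\mathcal{S}_1,\mathcal{B}(\ell_2))_{1/p}$ is fine for $1<p<\infty$ since $(\mathcal{S}_1,\mathcal{B}(\ell_2))_{1/p}=(\mathcal{S}_1,\mathcal{S}_\infty)_{1/p}=\mathcal{S}_{p'}$ in that range. You are right to flag the verification of the interpolation hypotheses as the only nontrivial bookkeeping; once that is done, the rest is soft. In short: your proof is correct, and there is nothing in the present paper to contrast it with.
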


This allow us to present the following definition for matrix sets.

A matrix set $\mathbf{K}=(K_n)$ over $V$ is \emph{relatively operator $p$-compact} if there exists $v \in \mathcal{S}_p[V]$ such that $\mathbf{K}\subset \Theta^v(\mathbf{B}_{\mathcal{S}_p'})$.  

In the case $p=\infty$, we just say \emph{operator compact matrix set} instead of \emph{operator $\infty$-compact matrix set.}

In the context of Banach spaces, a $p$-compact map is defined as a mapping that sends the unit ball into a relatively $p$-compact set. In \cite{ChaDiGa-Operator-p-compact, chavez2024revisiting} we study the extension of this concept to the non-commutative framework.  The analogy is given by mapping the matrix unit ball into a relatively operator $p$-compact matrix set.

\begin{definition} \label{defn: operator p-compact maps}
A completely bounded mapping  $T: V \to W$  is \emph{operator $p$-compact} ($1\le p\le \infty$) if $T(\mathbf{B}_V)$ is a relatively operator $p$-compact matrix set in $W$. The operator $p$-compact norm of $T$ is defined as   
\begin{equation}\label{kpo}
\kappa_p^o(T) = \inf\{ \Vert w \Vert_{\mathcal{S}_p[W]} : T(\mathbf{B}_V) \subset \Theta^w(\mathbf{B}_{\mathcal{S}_p'})\}.
\end{equation}
\end{definition}

In the field of Banach spaces, the notion of a set being compact has multiple equivalent and useful formulations. However, when this concept is generalized to the non-commutative context, distinct and non-equivalent definitions emerge. 
One such generalization is the idea of operator compact matrix sets, which we have already referred to. 
Another one is the concept of completely compact matrix set defined by Webster in \cite[Def. 4.1.2]{Webster}:

\begin{definition}
    A matrix set $\mathbf{K}=(K_n)$ over $V$ is \emph{ completely compact} if it is closed, completely bounded  and for every $\varepsilon>0$ there exists a finite-dimensional $V_\varepsilon \subseteq V$ such that for each $n\in\N$ and $x\in K_n$ there is  $v\in M_n(V_\varepsilon) $ with $\|x-v\|\le \varepsilon$.
    A linear mapping $T : V \to W$ between operator spaces is called \emph{completely compact} if $\overline{T(\mathbf{B}_V)}$ is completely compact.
\end{definition}

 It is important to highlight \cite{Webster} that this definition is weaker; specifically, if a mapping is operator compact, then it is also completely compact.
 
The following  result appeared in \cite[Thm. 3.15]{chavez2024revisiting}. We state it here because it will be useful to our purposes.

\begin{theorem}\label{thm-factorization-Choi-Kim}
Any operator $p$-compact mapping $T: V \to W$ can be factored as $AT_0B$ where $A$ is operator compact, $T_0$ is operator $p$-compact, and $B$ is completely compact. Moreover, $\kappa_p^o(T) = \inf\{ \n{A}_{\cb} \kappa_p^o(T_0) \n{B}_{\cb} \}$ where the infimum is taken over all such factorizations.
In the case $p=\infty$, we can moreover take $B$ to be operator compact. 
\end{theorem}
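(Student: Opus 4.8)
The plan is to unwind Definition~\ref{defn: operator p-compact maps}, then isolate an operator-compact factor on the left by a H\"older-type splitting of the defining element of $\mathcal S_p[W]$, and finally produce the completely compact factor on the right as a (necessarily non-isometric) lifting. To begin, fix $\eps>0$ and, using the formula defining $\kappa_p^o$, choose $w\in\mathcal S_p[W]$ with $T(\mathbf B_V)\subseteq\Theta^w(\mathbf B_{\mathcal S_p'})$ and $\n{w}_{\mathcal S_p[W]}\le\kappa_p^o(T)+\eps$. By Lemma~\ref{lemma-Theta-well-defined} the map $\Theta^w\colon\mathcal S_p'\to W$ is completely bounded; it is the natural ``model'' operator $p$-compact map, and the goal is to factor $T$ through $\Theta^w$ and then to factor $\Theta^w$ itself.

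The first structural step is to split $\Theta^w$. The $W$-valued matrix H\"older inequality from Pisier's theory lets one write $w=w_0\cdot\beta$ with $w_0\in\mathcal S_\infty[W]=\mathcal S_\infty\widehat\otimes_{\min}W$ and a scalar Schatten matrix $\beta\in\mathcal S_p$, subject to $\n{w_0}_{\mathcal S_\infty[W]}\,\n{\beta}_{\mathcal S_p}\le\n{w}_{\mathcal S_p[W]}+\eps$, where one absorbs the ``size'' of the matrix entries of $w$ into $\beta$. A direct manipulation of the series defining $\Theta$ then gives $\Theta^w=A\circ T_0$, where $A:=\Theta^{w_0}\colon\mathcal S_1\to W$ and $T_0\colon\mathcal S_p'\to\mathcal S_1$ is a Schatten multiplication map built from $\beta$ (it lands in $\mathcal S_1$ because $\mathcal S_p\cdot\mathcal S_p'\subseteq\mathcal S_1$ by H\"older, with $\n{T_0}_{\cb}\le\n{\beta}_{\mathcal S_p}$). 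Since $w_0\in\mathcal S_\infty[W]$, the matrix set $A(\mathbf B_{\mathcal S_1})=\Theta^{w_0}(\mathbf B_{\mathcal S_\infty'})$ is by definition relatively operator compact, so $A$ is operator compact with $\n{A}_{\cb}\le\n{w_0}_{\mathcal S_\infty[W]}$; and $T_0$ is operator $p$-compact with $\kappa_p^o(T_0)\le\n{\beta}_{\mathcal S_p}$, which one checks by writing down the element of $\mathcal S_p[\mathcal S_1]$ representing it.

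The crux is to produce $B\in\CB(V,\mathcal S_p')$ with $\n{B}_{\cb}\le1$, with $B$ completely compact, and with $T=\Theta^w\circ B$, so that $T=A\,T_0\,B$. This is the step I expect to be genuinely hard: the matrix-set inclusion $T(\mathbf B_V)\subseteq\Theta^w(\mathbf B_{\mathcal S_p'})$ does \emph{not} automatically yield such a $B$, because $\Theta^w$ need not be a complete $1$-quotient onto its range, so there is no naive lifting. I see two routes. One is an operator-space Davis--Figiel--Johnson--Pe\l czy\'nski-type factorization applied to the matrix set $\overline{T(\mathbf B_V)}$: being operator $p$-compact, $T$ is in particular completely compact, so $\overline{T(\mathbf B_V)}$ is, up to any $\eps$, approximable within a single finite-dimensional subspace of $W$; one then builds an intermediate operator space through which $T$ factors so that the induced map $B$ inherits exactly this approximability, i.e.\ complete compactness. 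The other route is to pass to the adjoint $T'\colon W'\to V'$, identify it with the appropriate dual ``quasi operator $p$-nuclear'' type of map (one factoring completely isometrically through a subspace of a Schatten-valued space after a diagonal multiplier), and dualize, the outer pieces turning into the operator compact and completely compact factors. The appearance of the weaker notion ``completely compact'' (rather than ``operator compact'') for $B$ when $p<\infty$ is an artifact of this construction, which preserves only the weaker form of compactness.

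Finally, when $p=\infty$ the H\"older splitting above is trivial and $\overline{T(\mathbf B_V)}$ is itself relatively operator compact; running the argument while keeping track of this stronger compactness (the witnessing datum is now a genuine $\mathcal S_\infty$-tensor) allows one to take $B$ operator compact, which is the last assertion. For the equality $\kappa_p^o(T)=\inf\{\n{A}_{\cb}\kappa_p^o(T_0)\n{B}_{\cb}\}$, the inequality ``$\le$'' holds for \emph{every} factorization of the stated form, by the ideal property of mapping ideals applied to $(\text{operator }p\text{-compact},\kappa_p^o)$ (composing the operator $p$-compact $T_0$ on both sides with the completely bounded maps $A$ and $B$); the reverse inequality follows by tracking norms through the construction, where $\n{A}_{\cb}\,\kappa_p^o(T_0)\,\n{B}_{\cb}\le\n{w_0}_{\mathcal S_\infty[W]}\,\n{\beta}_{\mathcal S_p}\le\n{w}_{\mathcal S_p[W]}+\eps\le\kappa_p^o(T)+2\eps$, and $\eps>0$ is arbitrary.
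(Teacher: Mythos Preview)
First, note that this paper does not actually prove Theorem~\ref{thm-factorization-Choi-Kim}: it is quoted from \cite[Thm.~3.15]{chavez2024revisiting} and used as a black box (see the sentence preceding the statement). So there is no ``paper's own proof'' here to compare against; I can only assess your argument on its own merits and against what the usage in the present paper reveals about the structure of the original proof.

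Your left/middle step is sound in spirit: a H\"older-type factorization $w=w_0\cdot\beta$ (or the symmetric three-factor version from \cite{Pisier-Asterisque}) does yield $\Theta^w=\Theta^{w_0}\circ M_\beta$ with $\Theta^{w_0}$ operator compact and $M_\beta$ a Schatten multiplication, though you should be careful that Pisier's result gives the two-sided decomposition $w=\alpha\cdot w_0\cdot\beta$ with $\alpha,\beta\in\mathcal S_{2p}$, not directly the asymmetric one you wrote; extracting an $\mathcal S_\infty$ factor on one side requires an additional ``$c_0$-tail'' argument. Also, the claim that $M_\beta:\mathcal S_{p'}\to\mathcal S_1$ is operator $p$-compact with $\kappa_p^o(M_\beta)\le\n{\beta}_{\mathcal S_p}$ is plausible but you have not verified it; writing down the witnessing element of $\mathcal S_p[\mathcal S_1]$ and checking its norm is a nontrivial calculation.

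The genuine gap is exactly where you flag it: producing $B:V\to\mathcal S_{p'}$ with $T=\Theta^w\circ B$. Such a lifting need not exist (as you correctly say), and neither of your two sketched routes is carried out. The DFJP-type idea is vague as stated, and the adjoint route via quasi completely $p$-nuclear maps, while pointing in the right direction, requires real work: one must show that a quasi completely $p$-nuclear map $T'$ factors as (completely compact)${}\circ{}$(quasi completely $p$-nuclear), and then check that taking adjoints back produces factors of the correct type and in the correct order, which in the operator space setting is delicate. Looking at how Theorem~\ref{thm-factorization-Choi-Kim} is invoked in the proof of Theorem~\ref{thm-characterizations-p-OAP} $(iv)\Rightarrow(i)$, the authors use that $\Theta^v:\mathcal S_{p'}\to V$ itself factors as $\Theta^v=BA$ with $A:\mathcal S_{p'}\to\mathcal S_{p'}$ completely compact; this strongly suggests the right-hand completely compact factor is obtained not by lifting $T$ through $\Theta^w$, but by decomposing $w$ from the \emph{left} (writing $w=\gamma\cdot w_1$ with $\gamma\in\mathcal S_\infty$ a scalar compact matrix and $w_1\in\mathcal S_p[W]$), so that $\Theta^w=\Theta^{w_1}\circ L_{\gamma^t}$ and $L_{\gamma^t}$, being a norm limit of finite-rank Schatten multiplications, is completely compact. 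For a general domain $V$ one still has to connect this to $T$, and that is precisely the missing ingredient in your write-up.
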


A weaker notion of $p$-compactness which has its roots in the classical realm, was also introduced in \cite[Def. 5.1]{chavez2024revisiting}. To state the definition we need first to recall a  non-commutative version of the sequence space $\ell_p^w(X)$:

$$ S_{p}^w[V]:=\{ v \in M_{\infty}(V) :   \sup_{N} \Vert (v_{ij})_{i,j=1}^N  \Vert_{S_p^N \widehat{\otimes}_{\min} V } < \infty \}. $$

This is an operator space endowed with the matricial norm structure given by

$$ \Vert  \big ((v_{ij}^{kl})_{i,j} \big)_{k,l=1}^n  \Vert_{M_n(S_{p}^w[V])} := \sup_{N} \Vert \big((v_{ij}^{kl})_{i,j=1}^N \big)_{k,l=1}^{n} \Vert_{M_n(S_p^N \widehat{\otimes}_{\min} V) }.$$

For $1 \leq p \leq \infty$, we can equivalently define $S_{p}^w[V]$  through the following completely isometric identification \cite[Lem. 2.4]{ChaDiGa-Operator-p-compact} $$S_{p}^w[V] = \CB(S_{p'},V).$$

\begin{definition}
Let $1 \le p \le \infty$.
 A matrix set $\mathbf{K}=(K_n)$ over $V$ is called \emph{relatively operator weakly $p$-compact} if there exists $v \in \mathcal{S}_p^w[V]$  
 such that $\mathbf{K}\subset \Theta^v(\mathbf{B}_{\mathcal{S}_p'})$. 
A linear map $T : V \to W$ is called \emph{operator weakly $p$-compact} if the matrix set $T(\mathbf{B}_V)$ is relatively operator weakly $p$-compact, and in this case we define the operator weakly $p$-compact norm of $T$ by
\[
\omega_p^o(T) := \inf\left\{\| v\|_{ \mathcal{S}_p^w[V] } \;:\; v\in  \mathcal{S}_p^w[V], \; T(\mathbf{B}_V)\subset \Theta^v(\mathbf{B}_{\mathcal{S}_p'}) \right\}.
\]
We denote by $\mathcal{W}_p^o(V,W)$ the set of all operator weakly $p$-compact maps from $V$ to $W$.
\end{definition}

\section{The $p$-Operator Approximation Property} \label{sec p-oap}

In Banach spaces, the $p$-approximation property has been studied by many researchers in the field, reflecting a substantial interest in the area (see, for instance, the pioneering works  \cite{Choi-Kim, Delgado-Oja-Pineiro-Serrano,lassalle2012p}). In this section, we delineate its non-commutative counterpart. To facilitate this, we introduce a specific topology that is inspired both in the definitions given in the classical context and the one employed in defining the operator approximation property.
For that note, on the one hand, that   $T \in \CB(V,W)$ induces a completely bounded mapping $Id_{\mathcal{S}_p} \otimes T: \mathcal{S}_p \widehat{\otimes}_{\min} V \to \mathcal{S}_p \widehat{\otimes}_{\min} W$. On the other hand, by \cite[Cor. 1.2]{Pisier-Asterisque}, we can reinterpret this assignment by restricting its domain and codomain to get 
$Id_{\mathcal{S}_p} \otimes T: \mathcal{S}_p [V] \to \mathcal{S}_p [W]$. In turn, due to Lemma \ref{lemma-Theta-well-defined}, this yields a continuous map (also referred to by the same name)  $Id_{\mathcal{S}_p} \otimes T: \mathcal{S}_p[V] \to \mathcal{S}_p \widehat{\otimes}_{\min} W$.

\begin{definition}
Let $V$ and $W$ be operator spaces.
Let $1\le p\le \infty$. We define the topology $\tau_p$ on $\CB(V,W)$ to be the weakest topology in which the seminorms
\[
T \mapsto \|(Id_{\mathcal{S}_p} \otimes T)(v)\|_{\mathcal{S}_p \widehat{\otimes}_{\min} W}, \qquad v \in \mathcal{S}_p[V]
\]
are continuous.

We say that $V$ has the \emph{$p$-Operator Approximation Property} ($p$-OAP, for short) if there exists a net $(T^\alpha)$ in $\mathcal{F}(V,V)$ converging to the identity map $Id_V$ in this topology.
\end{definition}

By standard arguments, $V$ has the $p$-OAP if and only if for every $\varepsilon>0$ and every $v_1,v_2,\dotsc,v_n \in \mathcal{S}_p[V]$ there is  $T \in \mathcal{F}(V,V)$ such that for $k=1,2,\dotsc,n$ we have
\begin{equation}\label{eqn-approx-of-identity}
    \|(Id_{\mathcal{S}_p} \otimes T)(v_k) - v_k \|_{\mathcal{S}_p \widehat{\otimes}_{\min} V} < \varepsilon.
\end{equation}

The sum $v_1 \oplus \cdots \oplus v_n$ can be clearly identified with a single element $v \in \mathcal{S}_p[V]$, so $(Id_{\mathcal{S}_p} \otimes T)(v)$ is then understood as $(Id_{\mathcal{S}_p} \otimes T)(v_1) \oplus \cdots \oplus (Id_{\mathcal{S}_p} \otimes T)(v_n)$. In this case, since the restrictions to blocks are complete contractions $\mathcal{S}_p \to \mathcal{S}_p$, and when tensored with $Id_V$ we get a complete contraction $\mathcal{S}_p \widehat{\otimes}_{\min} V\to \mathcal{S}_p \widehat{\otimes}_{\min} V$,  for each  $k=1,2,\dotsc,n$ we have
\[
\|(Id_{\mathcal{S}_p} \otimes T)(v_k) - v_k \|_{\mathcal{S}_p \widehat{\otimes}_{\min} V} \le \|(Id_{\mathcal{S}_p} \otimes T)(v) - v \|_{\mathcal{S}_p \widehat{\otimes}_{\min} V}.
\]
Therefore, for $V$ to have the $p$-OAP it suffices to have the approximation \eqref{eqn-approx-of-identity} with $n=1$.

In the case of $p=\infty$, this corresponds to the classical Operator Approximation Property (OAP).

Yew \cite{Yew} defined an approximation property related to a fixed operator space $Z$. Namely, $V$ has the \textit{$Z$-approximation property ($Z$-AP)} if for every $\varepsilon>0$ and every $u\in Z\widehat{\otimes}_{\min} V$ there is $T\in\mathcal F(V,V)$ such that $\|(Id_{Z} \otimes T)(u) - u \|_{Z \widehat{\otimes}_{\min} V} < \varepsilon.$
Not that our $p$-OAP is weaker than Yew's $\mathcal{S}_p$-AP, since  we consider only $v \in \mathcal{S}_p[V]$ rather than $v \in \mathcal{S}_p \widehat{\otimes}_{\min} V$ (and therefore, for $p=\infty$, both notions coincide because $\mathcal{S}_\infty[V]=\mathcal{S}_\infty\widehat\otimes_{\min} V$).

We remark that the terminology $p$-OAP had already been used in \cite{An-Lee-Ruan} for a different approximation property. However, there is no risk of confusion as their notion is for $p$-operator spaces and not traditional operator spaces.

We now observe that the $p$-Operator Approximation Property implies the classical $p$-Approximation Property ($p$-AP), as expected, which further justifies the chosen nomenclature.

\begin{proposition}\label{prop-p-AOP-implies-p-AP}
Let $1 \le p \le \infty$. If an operator space $V$ has $p$-OAP, then it has $p$-AP as a Banach space.    
\end{proposition}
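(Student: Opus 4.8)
The plan is to verify the Banach-space $p$-AP of $V$ directly from its definition in terms of uniform approximation on relatively $p$-compact sets, testing the $p$-OAP only against the simplest elements of $\mathcal{S}_p[V]$, namely diagonal matrices. First I would recall from \cite{Sinha-Karn} that a Banach space $X$ has the $p$-AP precisely when $\mathcal{F}(X,X)$ is dense in $\mathcal{L}(X,X)$ for the topology of uniform convergence on relatively $p$-compact sets, and that every such set is contained in the ``$p$-convex hull'' $\big\{\sum_n \alpha_n x_n : (\alpha_n) \in B_{\ell_{p'}}\big\}$ of some sequence $(x_n)_n \in \ell_p(X)$, with the usual conventions $\ell_p(X) = c_0(X)$ and $\ell_{p'} = \ell_1$ when $p = \infty$. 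Since for a linear $T$ the supremum of $\|(T - Id_V)(\cdot)\|$ over that $p$-convex hull equals $\sup_{(\alpha_n) \in B_{\ell_{p'}}} \big\| \sum_n \alpha_n (Tx_n - x_n) \big\|_V$, it suffices to prove the following: for every $(x_n)_n \in \ell_p(V)$ and every $\varepsilon > 0$ there is $T \in \mathcal{F}(V,V)$ with $\sup_{(\alpha_n) \in B_{\ell_{p'}}} \big\| \sum_n \alpha_n (Tx_n - x_n) \big\|_V < \varepsilon$.

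Given such a sequence $(x_n)_n$ and $\varepsilon>0$, the next step is to attach to it the formal diagonal matrix $v := \sum_n e_{nn} \otimes x_n$ and to check that it is a genuine element of $\mathcal{S}_p[V]$. For finite matrices, the completely isometric diagonal inclusion $\ell_p^N \hookrightarrow \mathcal{S}_p^N$ upgrades --- tensoring with $V$ and interpolating between the extreme cases $p=1$ and $p=\infty$ --- to a completely isometric inclusion $\ell_p^N(V) \hookrightarrow \mathcal{S}_p^N[V] \subseteq \mathcal{S}_p[V]$, so that $\big\| \operatorname{diag}(x_1,\dots,x_N) \big\|_{\mathcal{S}_p[V]} = \big( \sum_{n=1}^N \|x_n\|_V^p \big)^{1/p}$; letting $N \to \infty$ shows $v \in \mathcal{S}_p[V]$ (only this membership, not the precise value of its norm, is used below). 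For $p = \infty$ one has directly $\operatorname{diag}(x_n) \in \mathcal{S}_\infty \widehat{\otimes}_{\min} V = \mathcal{S}_\infty[V]$ for any $(x_n) \in c_0(V)$. Applying the $p$-OAP to the single element $v$, via the characterization \eqref{eqn-approx-of-identity}, produces $T \in \mathcal{F}(V,V)$ with $\| (Id_{\mathcal{S}_p} \otimes T)(v) - v \|_{\mathcal{S}_p \widehat{\otimes}_{\min} V} < \varepsilon$; since $(Id_{\mathcal{S}_p} \otimes T)(v) = \sum_n e_{nn} \otimes Tx_n$, this reads $\big\| \operatorname{diag}(Tx_n - x_n) \big\|_{\mathcal{S}_p \widehat{\otimes}_{\min} V} < \varepsilon$.

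Finally I would slice this matrix estimate back down to $V$. For $(\alpha_n) \in B_{\ell_{p'}}$ set $D_\alpha := \operatorname{diag}(\alpha_n)$; a direct check gives $\| D_\alpha \|_{\mathcal{S}_p'} = \|(\alpha_n)\|_{\ell_{p'}} \le 1$ in every case (using $\mathcal{S}_p' = \mathcal{S}_{p'}$ for $p > 1$ and $\mathcal{S}_1' = \mathcal{B}(\ell_2)$). Viewing elements of $\mathcal{S}_p \widehat{\otimes}_{\min} V$ as maps $\mathcal{S}_p' \to V$ via the completely isometric embedding recalled after Lemma \ref{lemma-Theta-well-defined}, any such $u$ satisfies $\| u(D_\alpha) \|_V \le \| u \|_{\mathcal{S}_p \widehat{\otimes}_{\min} V}$ whenever $\| D_\alpha \|_{\mathcal{S}_p'} \le 1$. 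Taking $u = \operatorname{diag}(Tx_n - x_n)$ and computing from the matrix-unit description of this identification that $u(D_\alpha) = \sum_n \alpha_n (Tx_n - x_n)$ (an absolutely convergent series, since $(Tx_n - x_n) \in \ell_p(V)$ and $(\alpha_n) \in \ell_{p'}$), I obtain $\big\| \sum_n \alpha_n (Tx_n - x_n) \big\|_V < \varepsilon$ for all $(\alpha_n) \in B_{\ell_{p'}}$, which is the criterion isolated above. Hence $V$ has the $p$-AP.

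The hard part is the claim in the middle paragraph that a diagonal $\ell_p(V)$-sequence really is an element of $\mathcal{S}_p[V]$: this is the only point where the structure of Pisier's vector-valued Schatten classes enters, but it follows from the elementary fact that the diagonal compression is completely contractive on $\mathcal{S}_\infty$ and on $\mathcal{S}_1$ together with complex interpolation (and, for the isometric statement rather than merely the inclusion, from $[c_0(V), \ell_1(V)]_{1/p} = \ell_p(V)$). Everything else is formal manipulation with the minimal tensor product and the identification of $\mathcal{S}_p \widehat{\otimes}_{\min} V$ with matrices; the argument runs uniformly over $p \in [1,\infty]$, the case $p = \infty$ --- where the $p$-OAP is the OAP and the $p$-AP is the AP --- being covered by the conventions above.
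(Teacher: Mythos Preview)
Your proof is correct and follows essentially the same route as the paper's: embed an $\ell_p(V)$-sequence as a diagonal matrix in $\mathcal{S}_p[V]$ (the paper cites \cite[Cor.~1.3]{Pisier-Asterisque} where you sketch the interpolation argument), apply the $p$-OAP to this single element, and then slice the resulting $\mathcal{S}_p\widehat\otimes_{\min}V$-estimate back to $V$ via the identification $\mathcal{S}_p\widehat\otimes_{\min}V \subset \CB(\mathcal{S}_{p'},V)$. Your evaluation at the diagonal $D_\alpha$ is exactly the paper's restriction of the induced map to $\ell_{p'}\subset\mathcal{S}_{p'}$, and your supremum criterion $\sup_{(\alpha_n)\in B_{\ell_{p'}}}\bign{\sum_n\alpha_n(Tx_n-x_n)}$ is precisely the $\n{\cdot}_{\mathcal{B}(\ell_{p'},V)}$-norm the paper invokes.
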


\begin{proof}
Let $x = (x_n)_{n=1}^\infty \in \ell_p(V)$ (or $c_0(V)$ in the case $p=\infty$). 
By identifying it with a diagonal matrix, we can think of $x$ as an element of $\mathcal{S}_p[V]$ \cite[Cor. 1.3]{Pisier-Asterisque}.
Therefore, given $\varepsilon>0$ there exists $T \in \mathcal{F}(V,V)$ such that $\n{ (Id_{\mathcal{S}_p} \otimes T)(x)-x }_{\mathcal{S}_p \widehat{\otimes}_{\min} V} < \varepsilon$.
Note that $y = (Id_{\mathcal{S}_p} \otimes T)(x)-x$ is still a diagonal matrix.
Moreover, since $\mathcal{S}_p \widehat{\otimes}_{\min} V\subset\CB(\mathcal{S}_{p'},V)$, $y$ defines a bounded map from $\ell_{p'}$ to $V$. Therefore,
\[
\n{ (T (x_n)) - (x_n) }_{\ell_{p'}^w[V]} = \n{y}_{\mathcal{B}(\ell_{p'},V)} \le \n{y}_{\CB(\mathcal{S}_{p'},V)} < \varepsilon
\]
which implies that $V$ has the $p$-AP \cite[Comment between Defn. 6.1 and Prop. 6.2]{Sinha-Karn}.
\end{proof}

For any $2<p\le \infty$ it is known that there exists a Banach space without $p$-AP \cite[Thm. 6.7]{Sinha-Karn}, and therefore by Proposition \ref{prop-p-AOP-implies-p-AP} there exists an operator space without $p$-OAP: simply endow the aforementioned Banach space example with any compatible operator space structure. 
It would be interesting to find an operator space that has the $p$-AP but fails the $p$-OAP, although such a construction would likely be delicate.
In the case $p=\infty$, it is known that such a thing can happen in a rather extreme way: Arias \cite{Arias} has shown that there exist Hilbertian operator spaces without the OAP, see also \cite{OikhbergRicard2004MathAnn} for related examples.

As in the classical setting, the topology defining the $p$-OAP can be understood in terms of uniform convergence on (operator) $p$-compact (matrix) sets.

\begin{proposition}\label{prop-uniform-convergence-over-p-compact}
Let $V$ and $W$ be operator spaces.
Let $1\le p\le \infty$. The topology $\tau_p$ on $\CB(V,W)$ coincides with the weakest topology induced by the seminorms
\[
T \mapsto \|T\|_{\mathbf{K}} := \sup \{ \| T_n(x) \|_{M_n(W)} \;:\; x \in K_n,\, n\in\N \}
\]
where $\mathbf{K} = (K_n)_{n=1}^\infty$ is an operator $p$-compact matrix set over $V$.
In other words, the topology $\tau_p$ is the topology of uniform convergence on  relatively operator $p$-compact matrix sets.
\end{proposition}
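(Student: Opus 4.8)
The plan is to show that the two families of seminorms are equivalent, i.e.\ each is dominated by (a constant times) the other, which suffices for the induced topologies to coincide. Concretely, I would prove the two inclusions: (i) for every $v\in\mathcal{S}_p[V]$, the seminorm $T\mapsto\|(Id_{\mathcal{S}_p}\otimes T)(v)\|_{\mathcal{S}_p\widehat\otimes_{\min}W}$ is controlled by $T\mapsto\|T\|_{\mathbf{K}}$ for a suitable operator $p$-compact matrix set $\mathbf{K}$ over $V$; and (ii) conversely, for every operator $p$-compact matrix set $\mathbf{K}$ over $V$, the seminorm $\|T\|_{\mathbf{K}}$ is controlled by finitely many (in fact one) seminorm of the first type.

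For direction (i): given $v\in\mathcal{S}_p[V]$, consider the associated map $\Theta^v:\mathcal{S}_p'\to V$ from Lemma~\ref{lemma-Theta-well-defined} and the matrix set $\mathbf{K}=\Theta^v(\mathbf{B}_{\mathcal{S}_p'})$, which is relatively operator $p$-compact by definition. The key identity to exploit is the functoriality of the $\Theta$ construction under postcomposition: for $T\in\CB(V,W)$ one has $\Theta^{(Id_{\mathcal{S}_p}\otimes T)(v)} = T\circ\Theta^v$ as maps $\mathcal{S}_p'\to W$. Then $\|(Id_{\mathcal{S}_p}\otimes T)(v)\|_{\mathcal{S}_p\widehat\otimes_{\min}W}$ equals the $\CB(\mathcal{S}_p',W)$-norm of $T\circ\Theta^v$, which (since $\mathcal{S}_p\widehat\otimes_{\min}W$ embeds completely isometrically in $\CB(\mathcal{S}_p',W)$ and a cb-norm can be computed via amplifications on the matrix unit ball) is at most $\sup_n\sup\{\|T_n(x)\|_{M_n(W)}: x\in (T\circ\Theta^v)\text{-image at level }n\}$—wait, more precisely it is $\sup_n\{\|(T\circ\Theta^v)_n(\alpha)\|: \alpha\in\mathbf{B}_{M_n(\mathcal{S}_p')}\} = \sup_n\{\|T_n(x)\|: x\in (\Theta^v)_n(\mathbf{B}_{M_n(\mathcal{S}_p')})\} = \|T\|_{\mathbf{K}}$. (I would check that $(\Theta^v)_n(\mathbf{B}_{M_n(\mathcal{S}_p')})$ is exactly the level-$n$ part of $\mathbf{K}=\Theta^v(\mathbf{B}_{\mathcal{S}_p'})$, which is how $\Theta^v(\mathbf{B}_{\mathcal{S}_p'})$ is defined as a matrix set via amplifications of $\Theta^v$.)

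For direction (ii): let $\mathbf{K}=(K_n)$ be operator $p$-compact, so there exists $v\in\mathcal{S}_p[V]$ with $K_n\subseteq (\Theta^v)_n(\mathbf{B}_{M_n(\mathcal{S}_p')})$ for all $n$. For $x\in K_n$ pick $\alpha\in M_n(\mathcal{S}_p')$ with $\|\alpha\|\le 1$ and $(\Theta^v)_n(\alpha)=x$; then $T_n(x)=(T\circ\Theta^v)_n(\alpha) = \Theta^{(Id_{\mathcal{S}_p}\otimes T)(v)}_n(\alpha)$, so $\|T_n(x)\|_{M_n(W)}\le \|\Theta^{(Id_{\mathcal{S}_p}\otimes T)(v)}\|_{\CB(\mathcal{S}_p',W)} = \|(Id_{\mathcal{S}_p}\otimes T)(v)\|_{\mathcal{S}_p\widehat\otimes_{\min}W}$, using again that the embedding $\mathcal{S}_p\widehat\otimes_{\min}W\hookrightarrow\CB(\mathcal{S}_p',W)$ is a complete isometry. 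Taking the supremum over $x\in K_n$ and $n$ gives $\|T\|_{\mathbf{K}}\le \|(Id_{\mathcal{S}_p}\otimes T)(v)\|_{\mathcal{S}_p\widehat\otimes_{\min}W}$, the desired domination.

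The main obstacle is bookkeeping around the identification $\Theta^{(Id_{\mathcal{S}_p}\otimes T)(v)}=T\circ\Theta^v$ at the matricial level and the precise meaning of $\Theta^v(\mathbf{B}_{\mathcal{S}_p'})$ as a matrix set (i.e.\ that its level $n$ is $(\Theta^v)_n(B_{M_n(\mathcal{S}_p')})$), together with justifying that the norm in $\mathcal{S}_p\widehat\otimes_{\min}W$ of an element can be read off as the completely bounded norm of the corresponding map in $\CB(\mathcal{S}_p',W)$ evaluated on the matrix unit ball. All of these are essentially definitional once one unwinds Lemma~\ref{lemma-Theta-well-defined} and the completely isometric embedding $\mathcal{S}_p\widehat\otimes_{\min}W\hookrightarrow\CB(\mathcal{S}_p',W)$, but care is needed because $\mathcal{S}_p[V]$ is defined by interpolation and the matrix-set description of its elements (as in \cite[pp.~18--20]{Pisier-Asterisque}) must be invoked correctly; the map $Id_{\mathcal{S}_p}\otimes T:\mathcal{S}_p[V]\to\mathcal{S}_p\widehat\otimes_{\min}W$ is exactly the one set up in the paragraph preceding the definition of $\tau_p$, so its compatibility with $\Theta$ is the one genuinely technical point to nail down.
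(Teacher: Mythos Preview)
Your proposal is correct and follows essentially the same approach as the paper: both arguments hinge on the identity $T\circ\Theta^v=\Theta^{(Id_{\mathcal{S}_p}\otimes T)(v)}$ together with the completely isometric embedding $\mathcal{S}_p\widehat\otimes_{\min}W\hookrightarrow\CB(\mathcal{S}_p',W)$, which yields the exact equality $\|T\|_{\mathbf{K}}=\|(Id_{\mathcal{S}_p}\otimes T)(v)\|_{\mathcal{S}_p\widehat\otimes_{\min}W}$ when $\mathbf{K}=\Theta^v(\mathbf{B}_{\mathcal{S}_p'})$. The paper simply observes up front that one may restrict to such $\mathbf{K}$ and then proves the equality directly, whereas you split it into two inequalities---but the content is identical.
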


\begin{proof}
From the definition of relatively operator $p$-compact matrix set, note that the topology in the statement does not change if we restrict to the case $\mathbf{K} = \Theta^v(\mathbf B_{\mathcal{S}_p'})$ for some $v \in \mathcal{S}_p[V]$.

Consider $n\in\N$ and $\sigma \in M_n(\mathcal{S}_{p'})$ with $ \Vert \sigma \Vert_{M_n(\mathcal{S}_{p'})} \leq 1$.
By \cite[Cor.1.2]{Pisier-Asterisque} there is a natural way to associate to $T$ a completely bounded mapping $Id_{\mathcal{S}_p} \otimes T:\mathcal{S}_p[V]\to \mathcal{S}_p[W]$ with $\|T\|=\|Id_{\mathcal{S}_p} \otimes T\|$ 
We have
$T_n(\Theta_n^v(\sigma))=\Theta^{w}_n(\sigma)$ with $w=Id_{\mathcal{S}_p} \otimes T(v)$
and the quantity $\|T\|_{\mathbf{K}}$ is precisely the supremum of the norms in $M_n(W)$ of these expressions. In other words, $\n{T}_{\mathbf{K}} = \|\Theta^{w}\|_{\CB(\mathcal{S}_p',W)}$.

On the other hand, the quantity $\|(Id_{\mathcal{S}_p} \otimes T)(v)\|_{\mathcal{S}_p \widehat{\otimes}_{\min} W}$ is the $\cb$-norm of the associated mapping $\mathcal{S}_{p'} \to W$, 
which is, precisely, $\Theta^{w}$,
from where the desired conclusion follows.
\end{proof}

In fact, note that the previous proof gives the following more precise statement: 
if $\mathbf{K} = \Theta^v(\mathbf B_{\mathcal{S}_p'})$ for some $v \in \mathcal{S}_p[V]$, then for any $T \in \CB(V,W)$ we have 
\begin{equation}\label{eqn-equality-norm-K-norm-Sp-min}
\n{T}_{\mathbf{K}} = \|(Id_{\mathcal{S}_p} \otimes T)(v)\|_{\mathcal{S}_p \widehat{\otimes}_{\min} W}.     
\end{equation}

Since every relatively operator $q$-compact matrix set is operator $p$-compact (for $q<p$) \cite[Prop. 3.12]{chavez2024revisiting}, as a direct consequence of Proposition  \ref{prop-uniform-convergence-over-p-compact} we get the next monotonicity result:

\begin{proposition}\label{prop-monotonicity-p-OAP}
 Let $1\le q<p\le \infty$.  If an operator space $V$ has the $p$-OAP then it also has the $q$-OAP.   
\end{proposition}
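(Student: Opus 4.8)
The plan is to deduce this monotonicity statement directly from Proposition \ref{prop-uniform-convergence-over-p-compact}, which identifies the topology $\tau_p$ with the topology of uniform convergence on relatively operator $p$-compact matrix sets. The key input is the already-cited inclusion of matrix-set classes: every relatively operator $q$-compact matrix set over $V$ is also a relatively operator $p$-compact matrix set when $q < p$ \cite[Prop. 3.12]{chavez2024revisiting}.

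First I would observe that, by Proposition \ref{prop-uniform-convergence-over-p-compact}, the topology $\tau_q$ on $\CB(V,V)$ is generated by the seminorms $T \mapsto \n{T}_{\mathbf{K}}$ where $\mathbf{K}$ ranges over relatively operator $q$-compact matrix sets over $V$, and similarly $\tau_p$ is generated by the seminorms $T \mapsto \n{T}_{\mathbf{L}}$ with $\mathbf{L}$ ranging over relatively operator $p$-compact matrix sets. Since the collection of relatively operator $q$-compact matrix sets is contained in the collection of relatively operator $p$-compact ones, every $\tau_q$-generating seminorm is also a $\tau_p$-generating seminorm; hence $\tau_q \subseteq \tau_p$ as topologies on $\CB(V,V)$.

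Now suppose $V$ has the $p$-OAP, so there is a net $(T^\alpha)$ in $\mathcal{F}(V,V)$ with $T^\alpha \to Id_V$ in $\tau_p$. Since $\tau_q$ is a coarser topology, the same net converges to $Id_V$ in $\tau_q$. Therefore $V$ has the $q$-OAP. One could alternatively phrase this without invoking Proposition \ref{prop-uniform-convergence-over-p-compact}: given $v \in \mathcal{S}_q[V]$ and $\varepsilon > 0$, one notes that $\Theta^v(\mathbf{B}_{\mathcal{S}_{q'}})$ is a relatively operator $q$-compact hence relatively operator $p$-compact matrix set, pick $w \in \mathcal{S}_p[V]$ realizing it, apply the $p$-OAP to $w$, and use \eqref{eqn-equality-norm-K-norm-Sp-min} to transfer the estimate back. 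Either way, there is essentially no obstacle here: the content is entirely carried by the inclusion of matrix-set classes from \cite{chavez2024revisiting}, and the only mild point to be careful about is matching the seminorm descriptions of $\tau_q$ and $\tau_p$ correctly via Proposition \ref{prop-uniform-convergence-over-p-compact}, which is precisely what makes the comparison of topologies immediate.
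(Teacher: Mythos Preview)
Your proof is correct and follows exactly the same approach as the paper: both argue that the inclusion of matrix-set classes from \cite[Prop.~3.12]{chavez2024revisiting} combined with Proposition~\ref{prop-uniform-convergence-over-p-compact} immediately yields $\tau_q \subseteq \tau_p$, whence the $p$-OAP implies the $q$-OAP.
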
 

The following is a generalization of the analogous result for the OAP \cite[Lemma 11.2.1]{Effros-Ruan-book}, and the proof is the same just using the topology $\tau_p$ instead of $\tau_\infty$.

\begin{proposition}\label{prop-p-OAP-density-in-CB}
    Let $V$ be an operator space
and $1\le p\le \infty$.
The following are equivalent:
\begin{enumerate}[(i)]
    \item $V$ has $p$-OAP.
    \item $\mathcal{F}(V,V)$ is $\tau_p$-dense in $\CB(V,V)$.
    \item For every operator space $W$, $\mathcal{F}(V,W)$ is $\tau_p$-dense in $\CB(V,W)$.
    \item For every operator space $W$, $\mathcal{F}(W,V)$ is $\tau_p$-dense in $\CB(W,V)$.
\end{enumerate}
\end{proposition}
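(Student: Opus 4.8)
The proof follows the standard template for approximation-property equivalences (e.g.\ \cite[Lemma 11.2.1]{Effros-Ruan-book}), and indeed the paper signals this. The implications $(iii)\Rightarrow(ii)$ and $(iv)\Rightarrow(ii)$ are trivial (take $W=V$), and $(ii)\Rightarrow(i)$ is immediate from the definition of $p$-OAP, since $Id_V \in \CB(V,V)$ can then be $\tau_p$-approximated by elements of $\mathcal{F}(V,V)$. Conversely, $(i)\Rightarrow(ii)$ follows because if $(T^\alpha)$ is a net in $\mathcal{F}(V,V)$ with $T^\alpha \to Id_V$ in $\tau_p$, then for any $S \in \CB(V,V)$ the net $S\circ T^\alpha$ lies in $\mathcal{F}(V,V)$ and converges to $S$; the key point is $\tau_p$-continuity of left composition by $S$, which is where the ideal structure enters. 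So the heart of the matter is proving $(i)\Rightarrow(iii)$ and $(i)\Rightarrow(iv)$.

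For $(i)\Rightarrow(iii)$: given $S \in \CB(V,W)$, a relatively operator $p$-compact matrix set $\mathbf{K}$ over $V$, and $\varepsilon>0$, we want $R \in \mathcal{F}(V,W)$ with $\|S-R\|_{\mathbf{K}} < \varepsilon$. Using Proposition \ref{prop-uniform-convergence-over-p-compact} we may assume $\mathbf{K} = \Theta^v(\mathbf{B}_{\mathcal{S}_p'})$ for some $v \in \mathcal{S}_p[V]$, and by \eqref{eqn-equality-norm-K-norm-Sp-min} the seminorm $\|\cdot\|_{\mathbf{K}}$ on $\CB(V,W)$ equals $T \mapsto \|(Id_{\mathcal{S}_p}\otimes T)(v)\|_{\mathcal{S}_p \widehat{\otimes}_{\min} W}$. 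Now pick $T \in \mathcal{F}(V,V)$ approximating $Id_V$ at the element $v \in \mathcal{S}_p[V]$, i.e.\ with $\|(Id_{\mathcal{S}_p}\otimes T)(v) - v\|_{\mathcal{S}_p \widehat{\otimes}_{\min} V} < \varepsilon/\|S\|_{\cb}$ (assume $S \neq 0$), and set $R := S\circ T \in \mathcal{F}(V,W)$. Then, since $Id_{\mathcal{S}_p}\otimes S : \mathcal{S}_p \widehat{\otimes}_{\min} V \to \mathcal{S}_p \widehat{\otimes}_{\min} W$ has cb-norm $\|S\|_{\cb}$ and $Id_{\mathcal{S}_p}\otimes R = (Id_{\mathcal{S}_p}\otimes S)\circ(Id_{\mathcal{S}_p}\otimes T)$, we get
\[
\|(Id_{\mathcal{S}_p}\otimes R)(v) - (Id_{\mathcal{S}_p}\otimes S)(v)\|_{\mathcal{S}_p \widehat{\otimes}_{\min} W} \le \|S\|_{\cb}\,\|(Id_{\mathcal{S}_p}\otimes T)(v) - v\|_{\mathcal{S}_p \widehat{\otimes}_{\min} V} < \varepsilon,
\]
which says precisely $\|R - S\|_{\mathbf{K}} < \varepsilon$. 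A minor technical care: $(Id_{\mathcal{S}_p}\otimes S)(v)$ a priori lives in $\mathcal{S}_p \widehat{\otimes}_{\min} W$ rather than $\mathcal{S}_p[W]$, but that is the ambient space of the seminorm anyway, so the computation is legitimate once one checks the factorization $Id_{\mathcal{S}_p}\otimes R = (Id_{\mathcal{S}_p}\otimes S)\circ(Id_{\mathcal{S}_p}\otimes T)$ at the level of $\mathcal{S}_p \widehat{\otimes}_{\min} V$, which is clear since $Id_{\mathcal{S}_p}\otimes T$ maps $\mathcal{S}_p[V]$ into $\mathcal{S}_p \widehat{\otimes}_{\min} V$ by Lemma \ref{lemma-Theta-well-defined}.

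For $(i)\Rightarrow(iv)$: given $S \in \CB(W,V)$, a relatively operator $p$-compact matrix set $\mathbf{L} = \Theta^u(\mathbf{B}_{\mathcal{S}_p'})$ over $W$ (for $u \in \mathcal{S}_p[W]$), and $\varepsilon>0$, the idea is to push $\mathbf{L}$ forward: $S(\mathbf{L}) = \Theta^{v}(\mathbf{B}_{\mathcal{S}_p'})$ where $v = (Id_{\mathcal{S}_p}\otimes S)(u) \in \mathcal{S}_p[V]$ (using that $Id_{\mathcal{S}_p}\otimes S$ maps $\mathcal{S}_p[W]$ to $\mathcal{S}_p[V]$ by \cite[Cor.~1.2]{Pisier-Asterisque}), so $S(\mathbf{L})$ is relatively operator $p$-compact over $V$. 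Choose $T \in \mathcal{F}(V,V)$ with $\|(T_n - Id)(y)\|_{M_n(V)} < \varepsilon$ for all $y \in S_n(L_n)$, $n\in\N$ — that is, $\|T\circ S - S\|_{\mathbf{L}} = \|T - Id_V\|_{S(\mathbf{L})} < \varepsilon$ — and set $R := T\circ S \in \mathcal{F}(W,V)$. Then $\|R - S\|_{\mathbf{L}} < \varepsilon$. The main obstacle, such as it is, is keeping straight the bookkeeping between the three interfaces for $\mathcal{S}_p$-valued objects — $\mathcal{S}_p[V]$, $\mathcal{S}_p \widehat{\otimes}_{\min} V$, and $\CB(\mathcal{S}_p',V)$ — and in particular verifying that pushing forward a relatively operator $p$-compact matrix set along a completely bounded map again yields one (with $v = (Id_{\mathcal{S}_p}\otimes S)(u)$ as witness); all of this is contained in Lemma \ref{lemma-Theta-well-defined}, \cite[Cor.~1.2]{Pisier-Asterisque}, Proposition \ref{prop-uniform-convergence-over-p-compact}, and the identity \eqref{eqn-equality-norm-K-norm-Sp-min}. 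No genuinely new difficulty arises relative to the $p=\infty$ case, which is exactly why the paper remarks that ``the proof is the same.''
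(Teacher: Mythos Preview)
Your proof is correct and follows exactly the standard template that the paper alludes to (the argument of \cite[Lemma 11.2.1]{Effros-Ruan-book} adapted from $\tau_\infty$ to $\tau_p$): trivially reduce to proving $(i)\Rightarrow(iii)$ and $(i)\Rightarrow(iv)$, then compose a fixed $S$ with a finite-rank approximant of $Id_V$ on the appropriate side. Your care in tracking which of $\mathcal{S}_p[V]$, $\mathcal{S}_p\widehat\otimes_{\min}V$, and $\CB(\mathcal{S}_p',V)$ each object lives in---and invoking \cite[Cor.~1.2]{Pisier-Asterisque} to ensure $v=(Id_{\mathcal{S}_p}\otimes S)(u)\in\mathcal{S}_p[V]$ in $(iv)$---is exactly the small amount of extra bookkeeping needed relative to the $p=\infty$ case, and you handle it correctly.
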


In Banach spaces, when dealing with bounded nets of mappings, different notions of convergence coincide. As expected, many similar equivalences also occur in our non-commutative framework. We present now some such results which will be useful for the characterization of the $p$-OAP.

First, we state a generalization of \cite[Cor. 6.2]{webster1998matrix}.

\begin{lemma}\label{lemma-finite-matrix-ball}
For any $1\le p \le \infty$, the matrix unit ball of a finite-dimensional operator space is operator $p$-compact.
\end{lemma}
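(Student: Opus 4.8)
The goal is to show that for a finite-dimensional operator space $E$, the matrix unit ball $\mathbf{B}_E = (B_{M_n(E)})_n$ is operator $p$-compact, i.e.\ that there exists $v \in \mathcal{S}_p[E]$ with $\mathbf{B}_E \subseteq \Theta^v(\mathbf{B}_{\mathcal{S}_p'})$. By the monotonicity statement (every relatively operator $q$-compact matrix set is operator $p$-compact for $q < p$, and $\mathcal{S}_\infty[E] = \mathcal{S}_\infty \widehat\otimes_{\min} E$), it suffices to handle the hardest case $p = \infty$, and then the general $p$ follows for free; alternatively one proves it directly for all $p$, which is no harder once the $p = \infty$ argument is set up. So I would concentrate on producing $v \in \mathcal{S}_\infty \widehat\otimes_{\min} E$, i.e.\ a completely bounded map $\Theta^v : \mathcal{S}_1 = \mathcal{S}_\infty' \to E$, such that $\Theta^v$ maps the matrix ball of $\mathcal{S}_1$ onto a matrix set containing $\mathbf{B}_E$.

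The key point is that since $E$ is finite-dimensional, $E$ is completely isometrically a quotient of $\mathcal{S}_1^N$ for $N = \dim E$ — more precisely, because $E'$ embeds completely isometrically into $M_N$ (any finite-dimensional operator space embeds completely isometrically into $M_N$ for $N$ large, e.g.\ $N = \dim E'$), dualizing gives a complete $1$-quotient map $q : \mathcal{S}_1^N \to E$ (using that complete $1$-quotients are exactly the preduals of completely isometric embeddings, as recalled in the preliminaries). A complete $1$-quotient sends the \emph{open} matrix unit ball of $\mathcal{S}_1^N$ onto the open matrix unit ball of $E$, hence $q(\mathbf{B}_{\mathcal{S}_1^N}) \supseteq r\,\mathbf{B}_E$ for every $r<1$, and in fact a routine surjectivity argument (or replacing $q$ by $(1+\eps)q$) gives $q(\mathbf{B}_{\mathcal{S}_1^N}) \supseteq \mathbf{B}_E$. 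Now compose with the natural complete contraction $\mathcal{S}_1 \to \mathcal{S}_1^N$ given by compression to the top-left $N \times N$ corner; this realizes $q$ as $\Theta^v$ for the element $v \in \mathcal{S}_\infty \widehat\otimes_{\min} E$ corresponding to $q$ viewed as a finite-rank (hence norm-limit-free, genuinely finite) map $\mathcal{S}_1 \to E$. Since this $v$ has finitely many nonzero entries and $E$ is finite-dimensional, $v \in \mathcal{S}_p[E]$ for every $p$ automatically, and $\mathbf{B}_E \subseteq \Theta^v(\mathbf{B}_{\mathcal{S}_p'}) $ follows from $\mathbf{B}_{\mathcal{S}_1^N} \subseteq \mathbf{B}_{\mathcal{S}_{p'}}$ after the identification of $\mathcal{S}_1^N$ as a corner of $\mathcal{S}_{p'}$ — here one should check the compression $\mathcal{S}_{p'} \to \mathcal{S}_1^N$ is a complete contraction, which holds because on $N\times N$ matrices the $\mathcal{S}_{p'}$-norm dominates the $\mathcal{S}_1^N$-norm only when $p' \ge 1$... wait, this needs care, see below.

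\textbf{Main obstacle.} The delicate point is the norm bookkeeping between $\mathcal{S}_1^N$ and $\mathcal{S}_{p'}$. One does \emph{not} have $\|\cdot\|_{\mathcal{S}_1^N} \le \|\cdot\|_{\mathcal{S}_{p'}}$ for finite matrices; rather $\|\cdot\|_{\mathcal{S}_1^N} \ge \|\cdot\|_{\mathcal{S}_{p'}}$. So the clean route is instead: pick $v$ corresponding to a complete $1$-quotient $\mathcal{S}_1^N \to E$, note that for \emph{each fixed} $p$ the inclusion $\mathcal{S}_{p'}^N \hookrightarrow \mathcal{S}_1^N$ is a complete contraction (since $p' \le \infty$ gives $\|\cdot\|_{\mathcal{S}_1^N}\le N^{1/p}\|\cdot\|_{\mathcal{S}_{p'}^N}$ — only bounded, not contractive), which forces one to absorb a dimensional constant $N^{1/p}$ into $v$; this is harmless since we only need \emph{some} $v\in\mathcal{S}_p[E]$, not a norm estimate. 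Concretely I would fix $p$, set $v$ to be $N^{1/p}$ times (the element corresponding to) the complete $1$-quotient $\mathcal{S}_1^N \to E$ composed with the corner compression $\mathcal{S}_{p'} \to \mathcal{S}_{p'}^N \to \mathcal{S}_1^N$, and verify $\Theta^v(\mathbf{B}_{\mathcal{S}_{p'}}) \supseteq \mathbf{B}_E$ directly from the complete quotient property. The only thing to be genuinely careful about is that $\Theta^v$ of the \emph{matrix} ball, at level $n$, covers $B_{M_n(E)}$ for \emph{all} $n$ simultaneously with the \emph{same} $v$ — but this is exactly what "complete $1$-quotient" delivers (surjectivity of each amplification onto the respective ball), so there is no real difficulty there. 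An alternative, perhaps cleaner, write-up uses Theorem \ref{thm-factorization-Choi-Kim} or the known fact that the identity on a finite-dimensional operator space is operator $p$-compact (if that has been established); but I expect the self-contained quotient-map argument above to be the shortest honest route.
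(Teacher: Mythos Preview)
Your overall strategy---realizing $E$ as a complete quotient of $\mathcal{S}_1^N$ and pulling back the matrix ball---is exactly the paper's. But there is one genuine slip and one place where the paper is cleaner.

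\textbf{The monotonicity is backwards.} You write that it suffices to do the ``hardest case $p=\infty$'' and deduce the rest from ``$q$-compact $\Rightarrow$ $p$-compact for $q<p$''. That implication means $p=\infty$ is the \emph{easiest} case, not the hardest; proving operator $\infty$-compactness of $\mathbf{B}_E$ gives nothing for smaller $p$. (If anything, proving $p=1$ would give all $p$.) Fortunately you do not actually rely on this reduction---you then argue directly for all $p$---so the error is not fatal, but the opening paragraph should be deleted or corrected.

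\textbf{The constant-juggling is unnecessary.} In your direct argument you correctly spot that the formal identity $\mathcal{S}_{p'}^N \to \mathcal{S}_1^N$ is not a complete contraction and propose absorbing a factor $N^{1/p}$. This is fine in spirit (some dimensional constant works, though whether it is exactly $N^{1/p}$ at the cb-level requires a separate check you have not done). The paper avoids this bookkeeping entirely with one extra observation: the matrix unit ball of $\mathcal{S}_{p'}^n$ is \emph{trivially} operator $p$-compact, since the identity $\mathcal{S}_{p'}^n \to \mathcal{S}_{p'}^n$ is itself a $\Theta^v$ for the obvious $v \in \mathcal{S}_p^n[\mathcal{S}_{p'}^n]$. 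Then one simply notes that $\mathcal{S}_{p'}^n$ and $\mathcal{S}_1^n$ are completely \emph{isomorphic} (automatic in finite dimensions), so $\mathbf{B}_{\mathcal{S}_1^n}$ is operator $p$-compact without computing any constant, and the quotient map to $E$ finishes as you describe. This is the same proof as yours, just with the norm comparison replaced by the soft fact that operator $p$-compactness is preserved under complete isomorphisms.
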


\begin{proof}
We follow the same strategy as in the proof of \cite[Cor. 6.2]{webster1998matrix}.
First, note that for any $n\in\N$, the matrix unit ball of $\mathcal{S}_{p'}^n$ is operator $p$-compact:
the identity map $\mathcal{S}_{p'}^n \to \mathcal{S}_{p'}^n$ is the map $\Theta^v$ corresponding to some $v \in \mathcal{S}_p^n[\mathcal{S}_{p'}^n]$.
Since for each $n\in\N$ the spaces $\mathcal{S}_{p'}^n$ and $\mathcal{S}_1^n$ are completely isomorphic, the matrix unit ball of the latter is operator $p$-compact.
Finally, since any finite-dimensional operator space is completely isomorphic to a quotient of an $\mathcal{S}_1^n$ space, the desired conclusion follows. 
\end{proof}

We denote by $\tcc$ the topology of uniform convergence on completely compact matrix sets.
As previously mentioned, for bounded nets of mappings, we now see that convergence in several different topologies actually coincides. 

\begin{proposition}\label{proposition-bounded-net}
Let $V$ and $W$ be  operator spaces, $T_\alpha, T\in \CB(V,W)$, and $1 \le p < \infty$. If the net $(T_\alpha)$ is  bounded, the following are equivalent:
\begin{enumerate}[(i)]
    \item $(T_\alpha)$ point-norm-converges to $T$.
    \item $(T_\alpha)$ $\tau_\infty$-converges to $T$. 
     \item $(T_\alpha)$ $\tau_p$-converges to $T$.
     \item $(T_\alpha)$ $\tau_{cc}$-converges to $T$. 
\end{enumerate}
\end{proposition}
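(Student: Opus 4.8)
The plan is to prove the cycle of implications $(i) \Rightarrow (iv) \Rightarrow (iii) \Rightarrow (ii) \Rightarrow (i)$, where the boundedness hypothesis $\sup_\alpha \|T_\alpha\|_{\cb} =: C < \infty$ is only needed for the first implication. Set $S_\alpha := T_\alpha - T$, so that $(S_\alpha)$ is a bounded net in $\CB(V,W)$ (with bound $\le C + \|T\|_{\cb}$) converging to $0$ in the point-norm topology; it suffices to show all four notions of convergence of $S_\alpha$ to $0$ are equivalent. The implications $(iv) \Rightarrow (iii) \Rightarrow (ii)$ are immediate from containments of the respective families of seminorms: every $\tau_\infty$-seminorm comes from an operator compact matrix set, which is in particular operator $p$-compact (for $p < \infty$, since operator $\infty$-compact implies operator $p$-compact by \cite[Prop. 3.12]{chavez2024revisiting}... wait, one needs the reverse; let me instead argue directly) — more carefully, $\tau_\infty$ uses seminorms $\|\cdot\|_{\mathbf K}$ for $\mathbf K$ operator compact, and by Theorem~\ref{thm-factorization-Choi-Kim} any operator compact set can be handled, but the cleanest route is: a relatively operator compact matrix set is relatively operator $p$-compact for every $p$, hence $\tau_\infty \subseteq \tau_p$; and a relatively operator $p$-compact matrix set is completely compact (since operator compact implies completely compact, as recalled after Webster's definition, and more generally operator $p$-compact implies completely compact — this needs justification, perhaps via Theorem~\ref{thm-factorization-Choi-Kim} since $B$ there is completely compact and $AT_0B$ maps the unit ball into a completely compact set as composition behaves well), so $\tau_p \subseteq \tcc$. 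Thus $(iv)\Rightarrow(iii)\Rightarrow(ii)$. Finally $(ii)\Rightarrow(i)$ is trivial since the point-norm seminorms $T\mapsto \|T(v)\|$ for $v\in V$ are among the $\tau_\infty$-seminorms (take the matrix set supported in level $n=1$ at the single point $v$, which is operator compact being finite).

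The substantive implication is $(i)\Rightarrow(iv)$: a bounded net converging point-norm to $0$ converges uniformly on completely compact matrix sets. Fix a completely compact matrix set $\mathbf K = (K_n)$ over $V$ and $\eps > 0$. By the definition of completely compact, there is a finite-dimensional subspace $V_\eps \subseteq V$ such that every $x \in K_n$ (any $n$) lies within $\eps$ of some $v \in M_n(V_\eps)$; moreover $\mathbf K$ is completely bounded, so $K_n \subseteq R\, \mathbf B_{M_n(V)}$ for some constant $R$. For $x\in K_n$, choosing such a $v\in M_n(V_\eps)$ (which we may take with $\|v\|_{M_n(V)}\le R+\eps$), we estimate
\[
\|(S_\alpha)_n(x)\|_{M_n(W)} \le \|(S_\alpha)_n(x - v)\|_{M_n(W)} + \|(S_\alpha)_n(v)\|_{M_n(W)} \le \|S_\alpha\|_{\cb}\,\eps + \|(S_\alpha)_n(v)\|_{M_n(W)}.
\]
The first term is $\le (C+\|T\|_{\cb})\eps$, uniformly in $\alpha$, $n$, $x$. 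For the second term, I use Lemma~\ref{lemma-finite-matrix-ball}: the matrix unit ball $\mathbf B_{V_\eps}$ of the finite-dimensional space $V_\eps$ is operator $p$-compact (indeed operator $\infty$-compact), so $\mathbf B_{V_\eps} \subseteq \Theta^{v_0}(\mathbf B_{\mathcal S_\infty'})$ for some $v_0 \in \mathcal S_\infty[V_\eps]\subseteq \mathcal S_\infty[V]$. By \eqref{eqn-equality-norm-K-norm-Sp-min} applied with $p=\infty$, $\sup_{n}\sup_{v\in (R+\eps)\mathbf B_{M_n(V_\eps)}}\|(S_\alpha)_n(v)\|_{M_n(W)} = (R+\eps)\,\|(Id_{\mathcal S_\infty}\otimes S_\alpha)(v_0)\|_{\mathcal S_\infty\widehat\otimes_{\min}W}$, and this is a single $\tau_\infty$-seminorm evaluated at $S_\alpha$. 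So it remains to show this tends to $0$; equivalently, since $v_0 \in \mathcal S_\infty[V_\eps] = \mathcal S_\infty \widehat\otimes_{\min} V_\eps$ with $V_\eps$ finite-dimensional, and $S_\alpha \to 0$ pointwise on the finite-dimensional space $V_\eps$ (hence in $\cb$-norm on $V_\eps$, as all norms on $\CB(V_\eps,W)$ restricted appropriately... actually pointwise convergence on a finite-dimensional domain implies norm convergence of the restrictions $S_\alpha|_{V_\eps}$ in $\CB(V_\eps,W)$ since $\mathcal F(V_\eps,W)=\CB(V_\eps,W)$ is finite-dimensional when... no, $W$ need not be finite-dimensional).

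Here is the fix for that last point, which I expect to be the main obstacle: pointwise convergence $S_\alpha(w)\to 0$ for each $w$ in the finite-dimensional $V_\eps$ does imply $\|(Id_{\mathcal S_\infty}\otimes S_\alpha)(v_0)\|\to 0$. Write $v_0 = \sum_{k=1}^m a_k \otimes w_k$ as a finite sum with $a_k \in \mathcal S_\infty$ and $\{w_k\}$ a basis of $V_\eps$ (possible since $V_\eps$ is finite-dimensional, so the algebraic tensor product $\mathcal S_\infty \otimes V_\eps$ is dense and we may first approximate $v_0$ by such a finite sum up to $\eps$ in $\mathcal S_\infty\widehat\otimes_{\min}V_\eps$, absorbing the error into the bounded net). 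Then $(Id_{\mathcal S_\infty}\otimes S_\alpha)(v_0) = \sum_k a_k \otimes S_\alpha(w_k)$, and its $\min$-norm is bounded by $\sum_k \|a_k\|_{\mathcal S_\infty}\,\|S_\alpha(w_k)\|_W \to 0$ since each $\|S_\alpha(w_k)\|_W \to 0$. This closes the loop: choosing $\alpha$ large makes the second term $< \eps$ for all $n$ simultaneously, and combined with the first term we get $\|S_\alpha\|_{\mathbf K} \le (C+\|T\|_{\cb}+1)\eps$ eventually, proving $(i)\Rightarrow(iv)$. The one genuinely delicate verification I would want to double-check is that operator $p$-compact (for finite $p$) implies completely compact, needed for $\tau_p \subseteq \tcc$ in $(iii)\Rightarrow(ii)$; if that is not already recorded it follows from Theorem~\ref{thm-factorization-Choi-Kim} together with the stability of completely compact matrix sets under the maps $A$ (operator compact, hence completely compact) and composition, which I would spell out.
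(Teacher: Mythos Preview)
Your cycle $(i)\Rightarrow(iv)\Rightarrow(iii)\Rightarrow(ii)\Rightarrow(i)$ has a genuine gap at $(iii)\Rightarrow(ii)$. You claim ``a relatively operator compact matrix set is relatively operator $p$-compact for every $p$, hence $\tau_\infty \subseteq \tau_p$'', but the monotonicity goes the other way: for $q<p$, operator $q$-compact implies operator $p$-compact (this is exactly \cite[Prop.~3.12]{chavez2024revisiting} as quoted in the paper), so the chain of classes is
\[
\{\text{operator $p$-compact}\}\subseteq \{\text{operator compact}\}\subseteq \{\text{completely compact}\},
\]
and correspondingly $\tau_p \subseteq \tau_\infty \subseteq \tcc$. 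Thus $\tau_p$-convergence does \emph{not} trivially imply $\tau_\infty$-convergence; your $(iii)\Rightarrow(ii)$ is simply not a containment-of-seminorms step. (You half-noticed this yourself, then talked yourself back into the wrong inclusion.) The same mistake makes your justification of $(iv)\Rightarrow(iii)$ needlessly circuitous: you do not need Theorem~\ref{thm-factorization-Choi-Kim}; ``operator $p$-compact $\Rightarrow$ operator compact $\Rightarrow$ completely compact'' gives $\tau_p\subseteq\tcc$ directly.

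The fix is tiny: reroute the cycle as $(i)\Rightarrow(iv)\Rightarrow(ii)\Rightarrow(iii)\Rightarrow(i)$, or simply note that $(iv)$ implies both $(ii)$ and $(iii)$ by the containments above, and that either of $(ii)$ or $(iii)$ implies $(i)$ since singletons at level $1$ are operator $p$-compact for every $p$. Your substantive step $(i)\Rightarrow(iv)$ is correct; the finite-dimensionality of $V_\eps$ really does give $v_0=\sum_{k=1}^m a_k\otimes w_k$ exactly (no approximation needed, since $\mathcal S_\infty\widehat\otimes_{\min}V_\eps=\mathcal S_\infty\otimes V_\eps$), and then $\|(Id_{\mathcal S_\infty}\otimes S_\alpha)(v_0)\|\le\sum_k\|a_k\|\,\|S_\alpha(w_k)\|\to 0$. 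This is essentially a self-contained merger of the paper's $(i)\Rightarrow(ii)$ (which cites an external result on stable point-norm convergence) and its $(iii)\Rightarrow(iv)$ (which uses Lemma~\ref{lemma-finite-matrix-ball}); your direct argument is arguably cleaner, once the cycle is repaired.
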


\begin{proof}
    $(i)\Rightarrow (ii)$ By \cite[Remark 6.1.1]{ChaDiGa-tensor-norms} if the bounded net $(T_\alpha)$ converges to $T$ in the  point-norm topology then it converges in the stable point-norm topology. That is, for every $v \in \mathcal{S}_\infty[V]$ we have that $\n{(Id_{\mathcal{S}_\infty}\otimes (T_\alpha-T))(v)}_{\mathcal{S}_\infty[W]}$ converges to 0. Note that, since $\mathcal{S}_\infty[W]=\mathcal{S}_\infty\widehat\otimes_{\min} W$, this quantity is nothing but $\n{\Theta^{w_\alpha}}_{\CB(\mathcal{S}_1,W)}$ with $w_\alpha=Id_{S_\infty}\otimes (T_\alpha - T)(v)\in \mathcal{S}_\infty[W]$. Now, to verify that $(T_\alpha)$ converges to $T$
 uniformly on relatively operator compact matrix sets, it suffices to take $\mathbf{K} = \Theta^v(\mathbf{B}_{\mathcal{S}_1})$ for some $v \in \mathcal{S}_\infty[V]$.
For each $x \in \mathbf{K}$  there exists $y \in B_{M_n(\mathcal{S}_1)}$ such that $x = (\Theta^v)_n y$ for certain $n$. Therefore,
\[
\n{(T_\alpha)_nx-T_nx} _{M_n} = \n{\big(\Theta^{w_\alpha}\big)_ny}_{M_n} \le \n{\Theta^{w_\alpha}}_{\CB(\mathcal{S}_1,\C)}
\]
which implies that $(T_\alpha)$ converges to $T$
 uniformly on $\mathbf{K}$.

  $(ii)\Rightarrow (iii)$ It is clear, since any operator $p$-compact matrix set is operator compact.

    $(iii)\Rightarrow (iv)$ Suppose that $\mathbf{L}$ is a completely compact matrix set over $V$. Without loss of generality, assume $\mathbf{L} \subseteq \mathbf{B}_V$ and $\|T_\alpha\|_{\cb}\le 1$, $\n{T}_{\cb}\le 1$.
Given $\varepsilon>0$, there exists a finite-dimensional $E \subseteq V$ such that every point in $\mathbf{L}$
 is $\varepsilon/2$-close to a point in $\mathbf{B}_E$. By Lemma \ref{lemma-finite-matrix-ball},  $\mathbf{B}_E$ is  operator $p$-compact so, by hypothesis, for $\alpha$ large enough $T_\alpha$ and $T$ are $\epsilon/2$-close on $\mathbf{B}_E$, which implies that $T_\alpha$ and $T$ are $\varepsilon$-close on $\mathbf{L}$.

 $(iv) \Rightarrow (i)$ This implication follows from the fact that for each $v\in V$, the matrix set $\mathbf K$, where $K_1=\{v\}$ and $K_n=\emptyset$ (for every $n\ge 2$) is completely compact.
\end{proof}

Now we present a noncommutative version of (a part of)  \cite[Thm. 2.1]{Delgado-Oja-Pineiro-Serrano} and \cite[Thm. 4.2]{Choi-Kim}. 
 Surprisingly, in part $(iv)$, we need to employ the topology $\tau_{cc}$ instead of the perhaps expected $\tau_{\infty}$ (uniform convergence on operator compact matrix sets). For the case $p=\infty$, it should be noted that Webster \cite[Thm. 4.4]{webster1998matrix} already proved the equivalence $(i) \Leftrightarrow (iii)$.

\begin{theorem}\label{thm-characterizations-p-OAP}
Let $1 \le p \le \infty$ and let $V$ be an operator space. The following are equivalent:
\begin{enumerate}[(i)]
    \item $V$ has $p$-OAP.
    \item For every operator space $W$, $\mathcal{F}(W,V)$ is $\tau_p$-dense in $\mathcal{K}_\infty^o(W,V)$.
    
    \item For every operator space $W$, $\mathcal{F}(W,V)$ is $\|\cdot\|_{\cb}$-dense in $\mathcal{K}_p^o(W,V)$.
    \item For every operator space $W$, $\mathcal{F}(W,V)$ is $\tcc$-dense in $\mathcal{K}_p^o(W,V)$.
\end{enumerate}
\end{theorem}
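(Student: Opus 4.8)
The plan is to establish the cycle of implications
$(i)\Rightarrow(ii)\Rightarrow(iii)\Rightarrow(iv)\Rightarrow(i)$, using the factorization result Theorem \ref{thm-factorization-Choi-Kim} as the engine that converts the $p$-OAP (which a priori only talks about approximating $\mathrm{Id}_V$ uniformly on operator $p$-compact matrix sets) into statements about approximating operator $p$-compact maps \emph{into} $V$.

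For $(i)\Rightarrow(ii)$: suppose $S\in\mathcal{K}_\infty^o(W,V)$, so $S(\mathbf{B}_W)\subseteq\Theta^v(\mathbf{B}_{\mathcal{S}_\infty'})$ for some $v\in\mathcal{S}_\infty[V]=\mathcal{S}_\infty\widehat\otimes_{\min}V$. Given an operator $p$-compact matrix set $\mathbf{K}=\Theta^u(\mathbf{B}_{\mathcal{S}_p'})$ over $W$ (by Proposition \ref{prop-uniform-convergence-over-p-compact} these suffice) and $\varepsilon>0$, one wants $R\in\mathcal{F}(W,V)$ with $\|R-S\|_{\mathbf{K}}<\varepsilon$. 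Apply the $p$-OAP to approximate $\mathrm{Id}_V$ in $\tau_p$ uniformly on the operator $p$-compact matrix set $S(\mathbf{K})\subseteq V$ (it is operator $p$-compact because $S(\mathbf K)\subseteq S_n(\mathbf K_n)$ and amplifications of the $\cb$-map $S$ preserve operator $p$-compactness; alternatively, compose $u$ through $S$ to get the witnessing element of $\mathcal S_p[V]$): pick $T\in\mathcal{F}(V,V)$ with $\|T\,x - x\|_{M_n}<\varepsilon$ for all $x\in S_n(\mathbf K_n)$. Then $R:=T\circ S\in\mathcal{F}(W,V)$ and $\|R-S\|_{\mathbf{K}}=\sup_{x\in\mathbf K}\|T_n S_n x - S_n x\|<\varepsilon$. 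This gives $\tau_p$-density in $\mathcal{K}_p^o(W,V)$, but since $\mathcal K_p^o\subseteq\mathcal K_\infty^o$ with $\kappa_\infty^o\le\kappa_p^o$ (monotonicity of $p$-compactness), one actually wants it for $\mathcal K_\infty^o$; here one uses that $\tau_p$-convergence on the \emph{larger} class $\mathcal K_\infty^o$ is what is being claimed and the same argument with $S\in\mathcal K_\infty^o$ works verbatim, the only point being that $S(\mathbf K)$ for $\mathbf K$ operator $p$-compact is still operator $p$-compact even when $S$ is merely operator $\infty$-compact---which is true since $S$ is completely bounded.

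For $(ii)\Rightarrow(iii)$: this is the step where Theorem \ref{thm-factorization-Choi-Kim} is essential. Given $S\in\mathcal{K}_p^o(W,V)$, factor $S=AT_0B$ with $A$ operator compact, $T_0$ operator $p$-compact, $B$ completely compact, and $\kappa_p^o(T_0)\le\kappa_p^o(S)+\delta$ etc. The key observation is that $A$ operator compact means $A(\mathbf B)$ lies in an operator compact matrix set $\mathbf K$, so $\tau_\infty$-convergence (hence $\tau_p$-convergence, which is weaker) on the space of maps ending in $V$ translates into $\cb$-norm control after precomposing by $A$: if $R_\alpha\to\mathrm{Id}$-type approximants converge to $S$ in $\tau_p$, i.e. uniformly on $\mathbf B_{W'}$'s image $\ldots$---more precisely, apply $(ii)$ to the map $S$ viewed as ending in $V$ and the operator $p$-compact (indeed operator compact, being contained in $A$'s image) matrix set, to get $R\in\mathcal F(W,V)$ with $\|R-S\|$ small on that set; then $\|R-S\|_{\cb}=\|(R-S)(\mathbf B_W)\|\le\|R-S\|_{\mathbf K}$ precisely because $S(\mathbf B_W)=A(T_0B(\mathbf B_W))$ sits inside the operator compact matrix set $A(\mathbf B)$ and $R$ can be chosen of the form (finite rank)$\circ S$ so that $R-S$ also factors through the same small set. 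The bookkeeping of which map to feed into $(ii)$ and how the factorization constants combine is the delicate part. For $(iii)\Rightarrow(iv)$: immediate, since $\cb$-norm convergence implies $\tau_{cc}$-convergence (uniform convergence on completely compact matrix sets, which are $\cb$-bounded). Finally $(iv)\Rightarrow(i)$: take $W=V$ and $S=\mathrm{Id}_V$, which is operator $p$-compact if and only if $V$ is finite-dimensional, so this does not directly work---instead, given $v\in\mathcal S_p[V]$ and $\varepsilon>0$, let $W=\mathcal S_{p'}$ and $S=\Theta^v\in\CB(\mathcal S_{p'},V)$, which is operator $p$-compact with $\kappa_p^o(\Theta^v)\le\|v\|_{\mathcal S_p[V]}$ by definition. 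The matrix set $\Theta^v(\mathbf B_{\mathcal S_p'})$ is operator $p$-compact, but is it completely compact? By the remark after Webster's definition, operator compact $\Rightarrow$ completely compact, and operator $p$-compact $\Rightarrow$ operator compact, so yes. Hence $\tau_{cc}$-density in $\mathcal K_p^o(\mathcal S_{p'},V)$ gives $T\in\mathcal F(\mathcal S_{p'},V)$ with $\sup_{x\in\Theta^v(\mathbf B)}\|T x - \Theta^v x\|<\varepsilon$, i.e. $\|T-\Theta^v\|_{\mathbf K}<\varepsilon$ for $\mathbf K=\Theta^v(\mathbf B_{\mathcal S_p'})$. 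One then needs to promote this $T\in\mathcal F(\mathcal S_{p'},V)$ to a finite-rank map \emph{on $V$}: since $T$ has finite rank, its image is finite-dimensional, and one uses a local-reflexivity/finite-dimensional lifting argument---or more simply, observes that $\|T-\Theta^v\|_{\mathbf K}<\varepsilon$ says exactly $\|(Id_{\mathcal S_p}\otimes(\widetilde T))(v)-v\|<\varepsilon$ for a suitable finite-rank $\widetilde T$ on $V$ built from $T$, via \eqref{eqn-equality-norm-K-norm-Sp-min}. I expect the main obstacle to be $(ii)\Rightarrow(iii)$: correctly using the three-term factorization so that the operator compactness of the outer factor $A$ upgrades $\tau_p$-approximation to $\cb$-norm approximation while keeping track of the norms, and making sure the approximating finite-rank maps can be arranged to factor through the small matrix set (which is why one approximates after composing, rather than approximating an arbitrary finite-rank map).
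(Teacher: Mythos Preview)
Your arguments for $(i)\Rightarrow(ii)$ and $(iii)\Rightarrow(iv)$ are fine, and for $(ii)\Rightarrow(iii)$ you have the right engine though the sketch is tangled; the paper uses only two factors: write the given map as $SR$ with $R$ operator $p$-compact and $S$ operator compact, note that $R(\mathbf{B}_W)$ is an operator $p$-compact matrix set over the intermediate space, and apply $(ii)$ to $S\in\mathcal{K}_\infty^o$ on that matrix set to obtain a finite-rank $A$ with $\|(S-A)R\|_{\cb}<\varepsilon$.

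The genuine gap is in $(iv)\Rightarrow(i)$. First there is a type mismatch: you write $\sup_{x\in\Theta^v(\mathbf{B})}\|Tx-\Theta^v x\|$ for $T,\Theta^v:\mathcal{S}_{p'}\to V$, but $\Theta^v(\mathbf{B}_{\mathcal{S}_{p'}})$ is a matrix set over $V$, not over $\mathcal{S}_{p'}$; the topology $\tcc$ on $\CB(\mathcal{S}_{p'},V)$ refers to completely compact matrix sets over the \emph{domain} $\mathcal{S}_{p'}$, and $\mathbf{B}_{\mathcal{S}_{p'}}$ is not such a set. More seriously, the ``promotion'' of a finite-rank $T:W\to V$ to a finite-rank $\widetilde{T}:V\to V$ is the entire difficulty of this implication, and your invocation of \eqref{eqn-equality-norm-K-norm-Sp-min} goes the wrong way: that identity concerns maps \emph{out of} $V$, whereas you hold a map \emph{into} $V$; no lifting comes for free here. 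The paper's route is substantially more delicate: factor $\Theta^v=BA$ with $A$ completely compact and $B$ operator $p$-compact, pass to the quotient $W=\mathcal{S}_{p'}/\ker B$ so that the induced $\widehat{B}:W\to V$ is \emph{injective}, and take $\mathbf{H}=QA(\mathbf{B}_{\mathcal{S}_{p'}})$ as the relevant completely compact matrix set over $W$. Hypothesis $(iv)$ produces $T=\sum_k y_k'\otimes v_k\in\mathcal{F}(W,V)$ close to $\widehat{B}$ on $\mathbf{H}$. The key new idea is a Hahn--Banach argument (using injectivity of $\widehat{B}$ and Proposition~\ref{proposition-bounded-net}) showing that $\widehat{B}'(V')$ is $\tcc$-dense in $W'$, so each functional $y_k'$ can be approximated on $\mathbf{H}$ by some $\widehat{B}'v_k'$ with $v_k'\in V'$; one then sets $R=\sum_k v_k'\otimes v_k\in\mathcal{F}(V,V)$ and checks directly that $R$ approximates $\mathrm{Id}_V$ on $\Theta^v(\mathbf{B}_{\mathcal{S}_{p'}})$. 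This density-of-the-adjoint-range step is the missing ingredient in your proposal.
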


\begin{proof}

$(i) \Rightarrow (ii)$ is trivial from Proposition \ref{prop-p-OAP-density-in-CB}.

$(ii) \Rightarrow (iii)$: Fix $T \in \mathcal{K}_p^o(W,V)$. By Theorem \ref{thm-factorization-Choi-Kim} there exist an operator space $Z$, an operator $p$-compact map $R : W \to Z$, and an operator compact map $S : Z \to V$ such that $T = SR$.
By our assumption, since $R(\mathbf{B}_W)$ is operator $p$-compact, given $\varepsilon>0$ there exists $A \in \mathcal{F}(Z,V)$ such that for every $n\in\N$ and every $x \in R_n(B_{M_n(W)})$ we have $\n{ (S-A)_nx }_{M_n(V)} < \varepsilon$.
Now,
\[
\n{T-AR}_{\cb} = \n{(S-A)R}_{\cb} = \sup\left\{ \n{ (S-A)_nx }_{M_n(V)} \;:\;n\in\N, x \in R_n(B_{M_n(W)})  \right\} \le \varepsilon    
\]
which gives the desired conclusion.

$(iii) \Rightarrow (iv)$:
This is clear since every completely compact matrix set is contained in a multiple of the matrix unit ball. 

$(iv) \Rightarrow (i)$:
Let $v \in \mathcal{S}_p[V]$, $\mathbf{K} = \Theta^v (\mathbf B_{\mathcal{S}_p'})$ and $\varepsilon>0$. Note that, trivially from the definition, the mapping $\Theta^v : \mathcal{S}_p' \to V$ is operator $p$-compact.
By the proof of Theorem \ref{thm-factorization-Choi-Kim}  we can factor $\Theta^v = BA$ where $A : \mathcal{S}_p'\to \mathcal{S}_p'$ is completely compact and $B : \mathcal{S}_p' \to V$ is operator $p$-compact.
Now let $W = \mathcal{S}_p'/\ker B$, and let $Q : \mathcal{S}_p' \to W$ be the quotient map. Let $\widehat{B} : W \to V$ be the induced map making $\widehat{B}Q = B$, and note that $\widehat{B}$ is also operator $p$-compact.
Consider the matrix set $\mathbf{H} = (H_n)_n = Q A(\mathbf{B}_{\mathcal{S}_p'})$ over $W$, and note that $\mathbf{H}$ is 
relatively completely  compact because $A$ is completely compact. By our assumption, there exists $T \in\mathcal{F}(W,V)$ such that
\[
\sup \left\{ \| T_nh - \widehat{B}_nh \|_{M_n(V)} \;:\;  n\in\N, h \in H_n \right\} < \varepsilon/2.
\]
The mapping $T$, having finite-rank, can be written as $T = \sum_{k=1}^N y_k' \otimes v_k$, where $y_k' \in W'$, $v_k \in V$ and $\sum_{k=1}^N\|v_k\| = 1$.
We claim that the image of $\widehat{B}'$ is $\tcc$-dense in $W'$. If not, by the Hahn-Banach theorem there exists a nonzero continuous functional $\varphi : (W',\tcc) \to \C$ which vanishes on $\widehat{B}'V'$. Note that $\varphi : (W',w^*) \to \C$ is also continuous. Indeed, it is enough to see \cite[Cor. 4.46]{fabian2001functional} that $\varphi$ is $w^*$-continuous when restricted to $B_{W'}$ and this holds by the equivalences of Proposition \ref{proposition-bounded-net}.
Hence, $\varphi$ corresponds to evaluation at some $w_0\in W\setminus\{0\}$. Therefore, for each $v'\in V'$ we have that $0 = \pair{\varphi}{\widehat{B}'v'} = \pair{\widehat{B}'v'}{w_0} = \pair{v'}{\widehat{B}w_0}$, which implies that $\widehat{B}w_0=0$ contradicting the injectivity of $\widehat{B}$.
Thus, for each $k=1,2,\dotsc, N$ there exists $v_k' \in V'$ such that
\[
\sup \left\{ \| \mpair{ h}{  \widehat{B}'v_k'-y_k' } \|_{M_n} \;:\;  n\in\N, h \in H_n \right\} < \varepsilon/2.
\]
Now define $R = \sum_{k=1}^N v_k' \otimes v_k \in \mathcal{F}(V,V)$.
For every $n\in\N$ and $x \in K_n = (\Theta^v)_nB_{M_n(\mathcal{S}_p')}$, there exists $\sigma \in B_{M_n(\mathcal{S}_p')}$ such that $x = (\Theta^v)_n\sigma = (\widehat{B}QA)_n\sigma$.
Therefore $x = \widehat{B}_nh$ for $h = (QA)_n\sigma \in H_n$, and thus
\begin{multline*}
\|R_nx-x\|_{M_n(V)} \le \|(R\widehat{B})_nh - T_nh\|_{M_n(V)} + \|T_nh-\widehat{B}_nh\|_{M_n(V)} < \|(R\widehat{B})_nh - T_nh\|_{M_n(V)} + \varepsilon/2 \\
\le \sum_{k=1}^N \| \mpair{ \widehat{B}_nh}{ v_k'}- \mpair{h}{y_k'} \|_{M_n} \|v_k\| + \varepsilon/2 \le \max_{1\le k \le N} \| \mpair{h}{ \widehat{B}'v_k' - y_k'} \|_{M_n} + \varepsilon/2 < \varepsilon/2+\varepsilon/2 = \varepsilon,
\end{multline*}
which gives the desired conclusion.

\end{proof}

We would like to emphasize that the equivalence $(i) \Leftrightarrow (iii)$ in Theorem \ref{thm-characterizations-p-OAP} is more satisfactory than the corresponding situation for Yew's $\mathcal{S}_p$-AP, where the analogous equivalence is not known \cite[Thms. 6.4 and 6.6]{Yew} .

We stress that in Theorem \ref{thm-characterizations-p-OAP} we were unable to get an equivalence involving the topology $\tau_\infty$. This is likely due to the fact that in the noncommutative setting there are several notions of compactness—such as complete compactness and operator compactness—which all generalize the classical one.
It would be interesting to know whether there is  an operator space satisfying the density condition in $\tau_\infty$ but the space does not have $p$-OAP.

Similarly to the classical context \cite[Thm. 6.4]{Sinha-Karn} (see also  \cite[Cor. 2.5]{Delgado-Oja-Pineiro-Serrano}), the $2$-OAP holds in every operator space.

\begin{corollary}\label{cor-2-OAP}
Every operator space has the 2-OAP (and, hence, the $p$-OAP for every $1\le p\le 2$ by Proposition \ref{prop-monotonicity-p-OAP}).
\end{corollary}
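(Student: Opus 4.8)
The plan is to deduce the statement from the equivalence $(i)\Leftrightarrow(iii)$ in Theorem~\ref{thm-characterizations-p-OAP}: it suffices to show that for \emph{every} pair of operator spaces $W,V$, each operator $2$-compact map $T:W\to V$ lies in the $\|\cdot\|_{\cb}$-closure of $\mathcal F(W,V)$. This is meant to be the noncommutative counterpart of the classical argument that every Banach space has the $2$-AP, with the role of $\ell_2$ now played by $\mathcal S_2'$.

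First I would establish a truncation estimate. For $v\in\mathcal S_2[V]$ and $N\in\N$ put $v_N=(P_N\otimes\id_V)(v)$, where $P_N:\mathcal S_2\to\mathcal S_2$ is the completely contractive compression onto the top-left $N\times N$ corner. Then $v_N\to v$ in $\mathcal S_2[V]$: finitely supported matrices are dense in $\mathcal S_2[V]$ and the maps $P_N\otimes\id_V$ are uniformly completely contractive there (interpolate the obvious bounds on $\mathcal S_\infty[V]$ and $\mathcal S_1[V]$). Hence, using Lemma~\ref{lemma-Theta-well-defined} and the complete isometry $\mathcal S_2\widehat\otimes_{\min}V\hookrightarrow\CB(\mathcal S_2',V)$,
\[
\|\Theta^v-\Theta^{v_N}\|_{\CB(\mathcal S_2',V)}=\|\Theta^{v-v_N}\|_{\CB(\mathcal S_2',V)}\le\|v-v_N\|_{\mathcal S_2[V]}\longrightarrow 0 ,
\]
so that for every $v\in\mathcal S_2[V]$ the map $\Theta^v:\mathcal S_2'\to V$ is the $\|\cdot\|_{\cb}$-limit of the finite-rank maps $\Theta^{v_N}$.

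Then, given $T\in\mathcal K_2^o(W,V)$, I would pick $v\in\mathcal S_2[V]$ with $T(\mathbf B_W)\subseteq\Theta^v(\mathbf B_{\mathcal S_2'})$ and factor $T=\Theta^v\circ S$ with $S\in\CB(W,\mathcal S_2')$ a complete contraction. For this one uses that $\mathcal S_2'=\mathcal S_2$ is, as an operator space, the operator Hilbert space $OH$, so the closed subspace $E=(\ker\Theta^v)^{\perp}$ is completely contractively complemented in $\mathcal S_2'$. The associated projection $P_E$ satisfies $P_E(\mathbf B_{\mathcal S_2'})=\mathbf B_E$, and $\Theta^v=\Theta^v\circ P_E$ (the complementary part lands in $\ker\Theta^v$), whence $\Theta^v(\mathbf B_{\mathcal S_2'})=(\Theta^v|_E)(\mathbf B_E)$ while $\Theta^v|_E:E\to V$ is injective. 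Therefore $S:=(\Theta^v|_E)^{-1}\circ T:W\to E\hookrightarrow\mathcal S_2'$ is well defined and, because each amplification of $\Theta^v|_E$ is injective, $S$ sends $\mathbf B_W$ into $\mathbf B_E$; thus $S$ is completely contractive and $\Theta^v\circ S=T$. Consequently
\[
\|T-\Theta^{v_N}S\|_{\cb}\le\|\Theta^v-\Theta^{v_N}\|_{\CB(\mathcal S_2',V)}\,\|S\|_{\cb}\longrightarrow 0 ,
\]
with $\Theta^{v_N}S\in\mathcal F(W,V)$, which is exactly what is needed; the final parenthetical assertion then follows from Proposition~\ref{prop-monotonicity-p-OAP}.

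The only delicate ingredient is the factorization $T=\Theta^v\circ S$ with $S$ \emph{completely} bounded. In the Banach-space argument one lifts $T$ through $\ell_2\to\ell_2/\ker$ using merely that closed subspaces of a Hilbert space are $1$-complemented; here one needs the genuinely operator-space statement that closed subspaces of $\mathcal S_2'$ are completely contractively complemented, which is why $p=2$ is essential (it holds because $\mathcal S_2'=OH$, and has no analogue for $\mathcal S_p'$ with $p\ne2$). That is the main obstacle; everything else—the reduction via Theorem~\ref{thm-characterizations-p-OAP} and the truncation estimate for $\Theta^v$—is routine. Should one wish to avoid invoking the identification $\mathcal S_2=OH$, the factorization could instead be routed through the Choi–Kim factorization of Theorem~\ref{thm-factorization-Choi-Kim}, at the cost of a less transparent argument.
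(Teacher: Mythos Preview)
Your argument is correct, and it takes a genuinely different route from the paper's proof. The paper also uses the identification $\mathcal{S}_2=\OH$, but exploits it differently: it invokes the factorization of any $T\in\mathcal{K}^o_2(W,V)$ through a quotient of $\mathcal{S}_2$ (coming from \cite[Thm.~3.11]{ChaDiGa-Operator-p-compact}), observes that such a quotient is again an $\OH$ space, and then appeals to the CBAP of $\OH$ together with Webster's $\tcc$-density result to verify condition~(iv) of Theorem~\ref{thm-characterizations-p-OAP}. Your approach instead works directly with condition~(iii): you use the $\OH$-structure of $\mathcal{S}_2'$ only to get a completely contractive projection onto $(\ker\Theta^v)^\perp$, which yields the lifting $S$, and then you approximate $\Theta^v$ in $\cb$-norm by the explicit finite-rank truncations $\Theta^{v_N}$. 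This is more self-contained, avoiding both the external CBAP fact and the detour through the $\tcc$ topology; the paper's route, on the other hand, makes the connection to the approximation theory of $\OH$ more visible and is closer in spirit to the later slice-mapping proof in Theorem~\ref{th-homogeneous-hilbertian}.
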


\begin{proof}
Let $V$ and $W$ be operator spaces and
let $T \in \mathcal{K}^o_2(W,V)$.
From the proof of \cite[Thm. 3.11]{ChaDiGa-Operator-p-compact}, $T$ factors through a quotient of  $\mathcal{S}_2$. Now, by \cite[p. 129]{Pisier-OS-theory} or \cite[Rmk. 1.11]{Pisier-Asterisque}, $\mathcal{S}_2$ is completely isometric to $\OH(\N \times \N)$.
Note that a quotient of an $\OH(I)$ space is itself an $\OH(J)$ space, because its dual is a subspace of $\OH(I)'$, which itself is an $\OH$ space.
Therefore, $T$ factors through an $\OH$ space.
An $\OH$ space has the CBAP, and therefore it has the strong OAP \cite[Thm. 11.3.3]{Effros-Ruan-book}.
By \cite[Sec. 4.4]{Webster}, $\mathcal{F}(W, \OH)$ is $\tcc$-dense in $\CB(W,\OH)$, which then implies that condition $(iv)$ in Theorem \ref{thm-characterizations-p-OAP} is satisfied and therefore $V$ has 2-OAP.
\end{proof}

In fact, note that the previous corollary implies \cite[Thm. 6.4]{Sinha-Karn} because Proposition \ref{prop-p-AOP-implies-p-AP} allows us to transfer approximation properties from an operator space to the underlying Banach space.
For the same reason, \cite[Thm. 6.2]{Sinha-Karn} implies that for every $p>2$ there exists an operator space that fails $p$-OAP.

As a consequence of Proposition \ref{proposition-bounded-net} and Theorem \ref{thm-characterizations-p-OAP} we obtain a version of \cite[Cor. 2.6]{Delgado-Oja-Pineiro-Serrano} in the operator space setting. As is often the case when relating a space to its bidual, an assumption of local reflexivity — which does not necessarily hold for all operator spaces — is required. Recall that an  operator space $V$ is said to be \emph{strongly locally reflexive} if given finite-dimensional subspaces $F\subseteq V''$ and $N \subseteq V'$, and $\varepsilon>0$, there exists a complete isomorphism $T : F \to E \subseteq V$ such that
(a) $\n{T}_{\cb},\n{T^{-1}}_{\cb} < 1+\varepsilon$,
(b) $\pair{Tv}{v'} = \pair{v}{v'}$ for all $v \in F$ and $v' \in N$,
(c) $Tv=v$ for all $v\in F \cap V$.

\begin{corollary} \label{cor: tranference from V''}
    Let $V$ be a strongly locally reflexive operator space and $1\le p\le\infty$. If $V''$ has the $p$-OAP then $V$ has the same property.
\end{corollary}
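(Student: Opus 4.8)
The plan is to verify condition $(iv)$ of Theorem \ref{thm-characterizations-p-OAP} for $V$, namely that for every operator space $W$, the finite-rank maps $\mathcal{F}(W,V)$ are $\tcc$-dense in $\mathcal{K}_p^o(W,V)$. So fix an operator $p$-compact map $T : W \to V$, a completely compact matrix set $\mathbf{L}$ over $W$ (which we may assume sits inside $\mathbf{B}_W$), and $\varepsilon > 0$; the goal is to produce $R \in \mathcal{F}(W,V)$ with $\|R-T\|_{\mathbf{L}} < \varepsilon$. Since $\mathbf{L}$ is completely compact, there is a finite-dimensional $E \subseteq W$ so that every element of $\mathbf{L}$ is within $\varepsilon/3$ (in the relevant $M_n$-norm) of an element of $\mathbf{B}_E$; because $T$ is completely bounded this reduces the problem to approximating $T$ uniformly on $\mathbf{B}_E$ — that is, it suffices to approximate the restriction $T|_E : E \to V$ in $\cb$-norm by finite-rank maps $E \to V$, and then precompose with any completely bounded projection-type extension. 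Actually the cleanest route: since $E$ is finite-dimensional, $T|_E$ is automatically operator $p$-compact with small norm control, and $\mathbf{B}_E$ is operator $p$-compact by Lemma \ref{lemma-finite-matrix-ball}, so it is enough to approximate $T|_E$ in the topology $\tau_p$.

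Now I bring in the bidual. Consider $\iota_V \circ T : W \to V''$. Since $V''$ has the $p$-OAP, by Theorem \ref{thm-characterizations-p-OAP}$(iv)$ (applied with $W$ in the role of the source space and $V''$ in the role of the target) there is a finite-rank map $S \in \mathcal{F}(W, V'')$ with $\|S - \iota_V T\|_{\mathbf{L}} < \varepsilon/3$; in particular $\|S - \iota_V T|_E\|$ is small on $\mathbf{B}_E$. Write $S = \sum_{k=1}^N w_k' \otimes \xi_k$ with $w_k' \in W'$ and $\xi_k \in V''$. The finite-dimensional subspace $F := \spa\{\xi_1,\dots,\xi_N\} \subseteq V''$ is where strong local reflexivity enters: choose a finite-dimensional $M \subseteq V'$ rich enough to "detect" the approximation (for instance, containing functionals that norm the finitely many matrix-level discrepancies arising at the fixed matrix levels needed to control $\mathbf{B}_E$ up to $\varepsilon$ — here one uses that $\mathbf{B}_E$, being operator $p$-compact, is governed by a single $v \in \mathcal{S}_p^{(n)}[E]$ and finitely many matrix levels via equation \eqref{eqn-equality-norm-K-norm-Sp-min}). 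Strong local reflexivity then furnishes a complete isomorphism $\Phi : F \to E_0 \subseteq V$ with $\|\Phi\|_{\cb}, \|\Phi^{-1}\|_{\cb} < 1+\varepsilon$ and $\pair{\Phi \xi}{v'} = \pair{\xi}{v'}$ for $v' \in M$. Set $R := \Phi \circ S = \sum_k w_k' \otimes \Phi(\xi_k) \in \mathcal{F}(W,V)$.

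It remains to estimate $\|R - T\|_{\mathbf{L}}$. On the test elements coming from $\mathbf{B}_E$ we compare $R = \Phi S$ with $T$ by inserting $\iota_V T$: the term $\|\Phi S - \Phi \iota_V T\|$ is controlled by $(1+\varepsilon)\|S - \iota_V T\|_{\mathbf{L}} < (1+\varepsilon)\varepsilon/3$, while $\|\Phi \iota_V T - T\|$ on $\mathbf{B}_E$ must be made small — this is exactly where property (b) of strong local reflexivity is used, since $\Phi \iota_V T$ and $T$ agree when paired against functionals in $M$, and $M$ was chosen (via the finitely many matrix levels controlling $\mathbf{B}_E$) so that this pairing agreement forces norm-closeness up to $\varepsilon/3$ there. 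Adding up and pushing back from $\mathbf{B}_E$ to $\mathbf{L}$ via the initial $\varepsilon/3$-approximation gives $\|R - T\|_{\mathbf{L}} < \varepsilon$ after adjusting constants, so condition $(iv)$ holds and $V$ has the $p$-OAP. The main obstacle is the bookkeeping in the middle step: strong local reflexivity only gives \emph{finitely many} functional constraints, so one must first reduce the $\tcc$-approximation over $\mathbf{L}$ — a priori involving all matrix levels — to a condition testable by finitely many functionals at finitely many matrix levels, which is precisely what the operator $p$-compactness of $\mathbf{B}_E$ (Lemma \ref{lemma-finite-matrix-ball}) together with the equality \eqref{eqn-equality-norm-K-norm-Sp-min} and the boundedness equivalences of Proposition \ref{proposition-bounded-net} make possible.
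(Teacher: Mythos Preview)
Your overall strategy is the right one and matches the paper's intended route, but the key step --- making $\|\Phi \iota_V T - T\|$ small on $\mathbf{B}_E$ --- invokes the wrong part of strong local reflexivity and cannot work as written. Property (b) only tells you that $\langle \Phi \xi, v' \rangle = \langle \xi, v' \rangle$ for the finitely many $v' \in M$; for each $x\in E$ this forces $\Phi(\iota_V T x) - T x$ to lie in the annihilator $M^{\perp} \subseteq V$, which has finite codimension but is still infinite-dimensional, so its elements can have arbitrary norm. No finite family of functionals, at any finite list of matrix levels, can force this difference to be small --- the obstruction lives in the infinite-dimensional target $V$, not in the source, so the appeal to Lemma \ref{lemma-finite-matrix-ball} and \eqref{eqn-equality-norm-K-norm-Sp-min} does not help here.

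The fix is to use property (c). Enlarge $F$ to contain both $S(W)$ and $\iota_V T(E)$ (the latter is finite-dimensional since $E$ is, so $F$ stays finite-dimensional). Then (c) gives $\Phi \xi = \xi$ for every $\xi \in F \cap \iota_V(V)$, hence $\Phi_n(\iota_V T)_n y = T_n y$ for all $y \in M_n(E)$ and all $n$, so the problematic term vanishes identically. Two bookkeeping points also need repair: as you defined it, $F = \spa\{\xi_k\}$ need not contain $\iota_V T(E)$, so the expression $\Phi \iota_V T$ is not even well-defined; and transferring the estimate from $\mathbf{B}_E$ back to $\mathbf{L}$ requires $\|R\|_{\cb}$ to be bounded, which in turn needs control of $\|S\|_{\cb}$ --- so obtain $S$ from condition (iii) of Theorem \ref{thm-characterizations-p-OAP} rather than (iv), giving $\|S - \iota_V T\|_{\cb} < \varepsilon/3$ and hence $\|S\|_{\cb} \le \|T\|_{\cb} + \varepsilon/3$. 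With these corrections the argument goes through cleanly.
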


We omit the proof since it is canonically translated from the classical one.

We can now characterize the $p$-OAP for a dual space,
 an operator space version of \cite[Thm. 2.8]{Delgado-Oja-Pineiro-Serrano}. For this, we need the following definition \cite[Def. 4.3]{chavez2024revisiting}: A mapping $T : V \to W$  is \emph{quasi completely $p$-nuclear} if $j \circ T : V \to Y$ is completely $p$-nuclear \cite[Def. 3.1.3.1]{Junge-Habilitationschrift}, where $j : W \to Y$ is a completely isometric embedding of $W$ into an injective operator space $Y$. The class of all quasi completely $p$-nuclear mappings $T:V \to W$ is denoted by $\mathcal{QN}^o_p(V,W)$.
Once again the proof is canonically translated from the classical one, so we omit it.

\begin{theorem}\label{thm-p-OAP-and-quasi-p-nuclear}
Let $1 \le p \le \infty$ and let $V$ be an operator space. The following are equivalent:
\begin{enumerate}[(i)]
    \item $V'$ has $p$-OAP.
    \item For every operator space $W$, $\mathcal{F}(V,W)$ is $\n{\cdot}_{\cb}$-dense in $\mathcal{QN}_p^o(V,W)$.
\end{enumerate}
\end{theorem}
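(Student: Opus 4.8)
The plan is to mimic the classical duality argument relating the $p$-AP of a dual space to density of finite-rank maps in the quasi-$p$-nuclear mappings, transferring it carefully to the operator space world using the characterization already established in Theorem \ref{thm-characterizations-p-OAP}. The key structural fact to exploit is the duality between quasi completely $p$-nuclear maps $V \to W$ and operator $p$-compact maps: a completely $p$-nuclear map factors in a standard way through $\mathcal{S}_p$, and by \cite[Def. 4.3]{chavez2024revisiting} and the factorization machinery of Theorem \ref{thm-factorization-Choi-Kim}, $T \in \mathcal{QN}_p^o(V,W)$ essentially means $T' : W' \to V'$ is operator $p$-compact (at least after composing with the canonical embeddings); conversely an operator $p$-compact map $S : W \to V'$ produces, via adjoints and the embedding $\iota_V : V \to V''$, a quasi completely $p$-nuclear map.

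The key steps, in order, would be: First, establish the adjoint correspondence precisely — show that $T \in \mathcal{F}(V,W)$ is $\n{\cdot}_{\cb}$-close to a given $U \in \mathcal{QN}_p^o(V,W)$ if and only if $T' \in \mathcal{F}(W',V')$ is $\n{\cdot}_{\cb}$-close to $U' \in \mathcal{K}_p^o(W',V')$ (restricting codomain/domain appropriately via injective hulls), using that taking adjoints is a complete isometry $\CB(V,W) \to \CB(W',V')$ and sends finite-rank maps to finite-rank maps. Second, for the implication (i)$\Rightarrow$(ii): given $U \in \mathcal{QN}_p^o(V,W)$, embed $W \hookrightarrow Y$ into an injective $Y$ so $jU$ is completely $p$-nuclear; then $(jU)' : Y' \to V'$ is operator $p$-compact, so by Theorem \ref{thm-characterizations-p-OAP}(iii) applied to $V'$ (which has $p$-OAP) there are finite-rank maps $A_n : Y' \to V'$ with $\n{A_n - (jU)'}_{\cb} \to 0$; then take adjoints, restrict back along $\iota_V$ and the inclusion $W \to Y$, and one needs a finite-rank map $V \to W$ (not merely $V \to Y$) — here one uses that finite-rank maps into $Y$ whose range is $\n{\cdot}_{\cb}$-close to landing in $W$ can be perturbed to land in $W$ and ultimately corrected using the local structure, or more simply that $\mathcal{F}(V,W)$ dense in $\mathcal{QN}_p^o(V,W)$ only requires approximating, not exact factoring. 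Third, for (ii)$\Rightarrow$(i): apply Theorem \ref{thm-characterizations-p-OAP} to $V'$, so it suffices to show $\mathcal{F}(W,V')$ is $\n{\cdot}_{\cb}$-dense in $\mathcal{K}_p^o(W,V')$ for every $W$; given $S \in \mathcal{K}_p^o(W,V')$ its adjoint $S' : V'' \to W'$ composed with $\iota_V : V \to V''$ gives a map $S'\iota_V \in \mathcal{QN}_p^o(V,W')$ (verify quasi-$p$-nuclearity via the injective hull and the known duality between completely $p$-nuclear and operator $p$-compact mappings), approximate it by finite-rank maps via (ii), take adjoints, and check that the approximation descends to $W$.

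The main obstacle I expect is the codomain issue in the duality: quasi completely $p$-nuclear is defined through an embedding into an injective operator space, so the clean statement "$T' $ operator $p$-compact" only holds for $T : V \to Y$ with $Y$ injective, and one must show that the specific choice of injective hull is immaterial and that approximation by finite-rank maps into $W$ is equivalent to approximation by finite-rank maps into $Y$ that happen to (nearly) land in $W$. This is exactly the kind of point where the classical proof of \cite[Thm. 2.8]{Delgado-Oja-Pineiro-Serrano} uses that quasi-$p$-nuclear operators and their approximation behave well under composition with isometric embeddings, and transcribing it requires verifying that all the relevant maps (inclusion $W \hookrightarrow Y$, the embedding $\iota_V$, adjoints) are completely bounded/completely isometric as needed and that the operator space ideal structure on $\mathcal{QN}_p^o$ and $\mathcal{K}_p^o$ interacts correctly — which is guaranteed by the mapping ideal axioms (a)--(c) recalled in the preliminaries together with Theorem \ref{thm-factorization-Choi-Kim}. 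Since the authors explicitly say the proof is canonically translated from the classical one, the substance of the argument is precisely this bookkeeping, and no genuinely new operator-space phenomenon is needed beyond the dictionary already set up in Theorem \ref{thm-characterizations-p-OAP} and the cited results from \cite{chavez2024revisiting}.
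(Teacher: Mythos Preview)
Your overall plan—reduce to Theorem \ref{thm-characterizations-p-OAP} for $V'$ via the duality between $\mathcal{QN}_p^o$ and $\mathcal{K}_p^o$—is exactly what the paper has in mind, and your sketch of $(ii)\Rightarrow(i)$ is correct: if $S\in\mathcal{K}_p^o(W,V')$ then $S'\iota_V\in\mathcal{QN}_p^o(V,W')$, and after approximating by $T_n\in\mathcal{F}(V,W')$ the maps $T_n'\iota_W\in\mathcal{F}(W,V')$ do the job because $(S'\iota_V)'\iota_W=S$.

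The gap is in $(i)\Rightarrow(ii)$. Going through an injective hull $j:W\hookrightarrow Y$ and dualizing gives finite-rank approximants $R_n:Y'\to V'$ to $(jT)'$, but then $R_n'\iota_V$ takes values in $Y''$, not in $W$, and there is no general mechanism to ``perturb a finite-rank map so that it lands in $W$''; that step simply fails. The classical proof (and hence its canonical translation) does not route through the injective hull at all. Instead one uses the internal description of quasi $p$-nuclearity: $T\in\mathcal{QN}_p^o(V,W)$ yields $v'\in\mathcal{S}_p[V']$ and a factorization $T=\widehat{T}\Phi$ with $\Phi:V\to M:=\overline{\Phi(V)}\subset\mathcal{S}_p$ and $\widehat{T}:M\to W$, $\n{\widehat{T}}_{\cb}\le 1$. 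The adjoint $\Phi':M'\to V'$ is operator $p$-compact (its image of $\mathbf{B}_{M'}$ is $\Theta^{v'}(\mathbf{B}_{\mathcal{S}_{p'}})$). Now apply Theorem \ref{thm-characterizations-p-OAP}(iii) to $V'$ to get finite-rank $R_k:M'\to V'$ with $\n{R_k-\Phi'}_{\cb}\to 0$. Since $M\subset\mathcal{S}_p$ is \emph{reflexive}, $R_k':V''\to M''=M$, so $R_k'\iota_V:V\to M$ makes sense and $\n{R_k'\iota_V-\Phi}_{\cb}\to 0$. Then $\widehat{T}R_k'\iota_V\in\mathcal{F}(V,W)$ and $\n{\widehat{T}R_k'\iota_V-T}_{\cb}\to 0$. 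The reflexivity of $\mathcal{S}_p$ (for $1<p<\infty$; the endpoints are handled by the already-known OAP case and the trivial case) is precisely what makes the codomain issue disappear, and it is the ingredient missing from your sketch.
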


We introduce an additional equivalent characterization, complementary to those  in Theorem \ref{thm-characterizations-p-OAP}, regarding when a space possesses the $p$-OAP.  This one is related with approximation in the operator weakly $p$-compact norm, and establishes an operator space analogue of \cite[Thm. 6.3]{Sinha-Karn}.

\begin{theorem}\label{thm-p-OAP-and-weakly-p-compact}
Let $1 \le p \le \infty$.
An operator space $V$ has the $p$-OAP if and only if for every operator space $W$ we have that $\mathcal{F}(W,V)$ is $\omega_p^o$-dense in $\mathcal{K}_p^o(W,V)$.
\end{theorem}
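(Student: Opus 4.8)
The strategy is to mimic the structure of the proof of Theorem \ref{thm-characterizations-p-OAP}, where the implication $(iv)\Rightarrow(i)$ was the substantive direction. Since every relatively operator $p$-compact matrix set is in particular relatively operator weakly $p$-compact (indeed $\mathcal{S}_p[V]$ embeds contractively into $\mathcal{S}_p^w[V]$, as $\mathcal{S}_p[V]\hookrightarrow\mathcal{S}_p\widehat\otimes_{\min}V\hookrightarrow\CB(\mathcal{S}_{p'},V)=\mathcal{S}_p^w[V]$), the $\omega_p^o$-topology on $\mathcal{K}_p^o(W,V)$ is finer than the $\tau_p$-topology restricted there; hence $\omega_p^o$-density of $\mathcal{F}(W,V)$ in $\mathcal{K}_p^o(W,V)$ immediately gives condition $(ii)$ of Theorem \ref{thm-characterizations-p-OAP} (density in $\tau_p$, which is weaker than $\tau_\infty$ there but here we only need the weaker statement; more precisely it gives $\tau_p$-density in $\mathcal{K}_p^o\subseteq\mathcal{K}_\infty^o$, and one checks this suffices to run the argument). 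So the ``only if'' direction —producing, from the $p$-OAP, approximations in the \emph{stronger} $\omega_p^o$-norm— is the real content.

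\textbf{The main direction.} Assume $V$ has the $p$-OAP and fix $T\in\mathcal{K}_p^o(W,V)$ and $\varepsilon>0$. By Theorem \ref{thm-factorization-Choi-Kim} write $T=SR$ with $R:W\to Z$ operator $p$-compact and $S:Z\to V$ operator compact (the finer factorization $AT_0B$ can be collapsed to this form). The point is that $\omega_p^o(T-AR)$ for a finite-rank $A$ is controlled by how well $A$ approximates $S$ \emph{uniformly on the operator $p$-compact matrix set} $R(\mathbf{B}_W)$: writing $R(\mathbf{B}_W)\subseteq\Theta^v(\mathbf{B}_{\mathcal{S}_{p'}})$ for some $v\in\mathcal{S}_p[Z]$, one has $(T-AR)(\mathbf{B}_W)\subseteq (S-A)\big(\Theta^v(\mathbf{B}_{\mathcal{S}_{p'}})\big)=\Theta^{w}(\mathbf{B}_{\mathcal{S}_{p'}})$ with $w=(Id_{\mathcal{S}_p}\otimes(S-A))(v)$, and crucially $w$ lives not merely in $\mathcal{S}_p^w[V]$ but one gets the estimate $\omega_p^o(T-AR)\le\|w\|_{\mathcal{S}_p^w[V]}=\|(Id_{\mathcal{S}_p}\otimes(S-A))(v)\|_{\mathcal{S}_p\widehat\otimes_{\min}V}$, where the last equality is \eqref{eqn-equality-norm-K-norm-Sp-min} applied with $W$ replaced by $V$ (using $\mathcal{S}_p^w[V]=\CB(\mathcal{S}_{p'},V)$ and that $\mathcal{S}_p\widehat\otimes_{\min}V\hookrightarrow\CB(\mathcal{S}_{p'},V)$ isometrically on this class). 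Thus it is enough to make $\|(Id_{\mathcal{S}_p}\otimes(S-A))(v)\|_{\mathcal{S}_p\widehat\otimes_{\min}V}$ small, i.e. to approximate $S$ in the seminorm $\|\cdot\|_{\mathbf{K}}$ with $\mathbf{K}=\Theta^v(\mathbf{B}_{\mathcal{S}_{p'}})$, by a finite-rank $A\in\mathcal{F}(Z,V)$. But $\mathbf{K}$ is a relatively operator $p$-compact matrix set over $Z$, so this is exactly what the $p$-OAP of $V$ provides via Proposition \ref{prop-p-OAP-density-in-CB}(iv) (density of $\mathcal{F}(Z,V)$ in $\CB(Z,V)$ in $\tau_p$, which by Proposition \ref{prop-uniform-convergence-over-p-compact} is uniform convergence on operator $p$-compact matrix sets). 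Taking such $A$ with $\|(S-A)\|_{\mathbf{K}}<\varepsilon$ and setting $A'=AR\in\mathcal{F}(W,V)$ yields $\omega_p^o(T-A')<\varepsilon$.

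\textbf{The converse direction.} For ``if'', suppose $\mathcal{F}(W,V)$ is $\omega_p^o$-dense in $\mathcal{K}_p^o(W,V)$ for every $W$. Given $v\in\mathcal{S}_p[V]$ and $\varepsilon>0$, the map $\Theta^v:\mathcal{S}_{p'}\to V$ is operator $p$-compact (trivially from the definition, as noted in the proof of Theorem \ref{thm-characterizations-p-OAP}), so by hypothesis there is $R\in\mathcal{F}(\mathcal{S}_{p'},V)$ with $\omega_p^o(\Theta^v-R)<\varepsilon$. Now $\Theta^v-R$ is operator weakly $p$-compact, and since $R$ has finite rank, $\Theta^v-R$ is in fact operator $p$-compact; the key observation is that the $\omega_p^o$-norm dominates the $\kappa_\infty^o$-norm (an operator weakly $p$-compact set sits inside a multiple of a witness $\Theta^w(\mathbf{B}_{\mathcal{S}_{p'}})$ with $w\in\mathcal{S}_p^w[V]=\CB(\mathcal{S}_{p'},V)$, whence $(\Theta^v-R)(\mathbf{B}_{\mathcal{S}_{p'}})$ is contained in a bounded operator compact matrix set of the appropriate size), so $\Theta^v-R$ is close to $\Theta^v$ in $\tau_\infty$, a fortiori in $\tau_p$. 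This shows $R$ can be taken $\tau_p$-close to $\Theta^v$, which is precisely the density statement needed to verify condition $(ii)$ of Theorem \ref{thm-characterizations-p-OAP}; hence $V$ has the $p$-OAP. (Alternatively, and more directly: precompose with a suitable factorization as in the $(iv)\Rightarrow(i)$ argument of Theorem \ref{thm-characterizations-p-OAP} to build the finite-rank approximant to $Id_V$ on $\mathbf{K}=\Theta^v(\mathbf{B}_{\mathcal{S}_{p'}})$.)

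\textbf{Main obstacle.} The delicate point is the inequality $\omega_p^o(T-AR)\le\|(Id_{\mathcal{S}_p}\otimes(S-A))(v)\|_{\mathcal{S}_p\widehat\otimes_{\min}V}$, i.e. checking that the finite-rank perturbation $w=(Id_{\mathcal{S}_p}\otimes(S-A))(v)$ is a legitimate \emph{witness} in $\mathcal{S}_p^w[V]=\CB(\mathcal{S}_{p'},V)$ with the norm control given by the $\mathcal{S}_p\widehat\otimes_{\min}V$-norm — this uses Lemma \ref{lemma-Theta-well-defined}, the identification $\mathcal{S}_p^w[V]=\CB(\mathcal{S}_{p'},V)$, and \eqref{eqn-equality-norm-K-norm-Sp-min}; everything else is a routine transcription of the Banach-space argument of \cite[Thm. 6.3]{Sinha-Karn} combined with the machinery already assembled in Theorem \ref{thm-characterizations-p-OAP}.
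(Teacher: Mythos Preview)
Your forward direction is correct, though you have made it harder than necessary. The factorization $T=SR$ from Theorem \ref{thm-factorization-Choi-Kim} is superfluous: since $T$ itself is operator $p$-compact there is already a witness $v\in\mathcal{S}_p[V]$ with $T(\mathbf{B}_W)\subseteq\Theta^v(\mathbf{B}_{\mathcal{S}_{p'}})$, and one can approximate $Id_V$ (rather than $S$) by a finite-rank $R\in\mathcal{F}(V,V)$ straight from the definition of the $p$-OAP. Setting $S=RT$ one then gets $(S-T)(\mathbf{B}_W)\subseteq\Theta^u(\mathbf{B}_{\mathcal{S}_{p'}})$ with $u=(Id_{\mathcal{S}_p}\otimes R)(v)-v$ and $\|u\|_{\mathcal{S}_p^w[V]}<\varepsilon$. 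This is exactly the paper's argument, and it avoids the detour through an auxiliary space $Z$; your version works for the same reason, just with an extra layer.

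Your converse direction, however, has a genuine gap. The claim that ``the $\omega_p^o$-norm dominates the $\kappa_\infty^o$-norm'' is not justified and is in general false: a witness $w\in\mathcal{S}_p^w[V]=\CB(\mathcal{S}_{p'},V)$ only gives that $\Theta^w(\mathbf{B}_{\mathcal{S}_{p'}})$ is relatively operator \emph{weakly} $p$-compact, not operator compact, so your parenthetical ``whence $(\Theta^v-R)(\mathbf{B}_{\mathcal{S}_{p'}})$ is contained in a bounded operator compact matrix set'' does not follow. The correct observation is much simpler and is what the paper uses: since $\|\cdot\|_{\cb}\le\omega_p^o(\cdot)$ (the mapping ideal property), $\omega_p^o$-density of $\mathcal{F}(W,V)$ in $\mathcal{K}_p^o(W,V)$ for every $W$ immediately yields $\|\cdot\|_{\cb}$-density, which is condition (iii) of Theorem \ref{thm-characterizations-p-OAP}. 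There is no need to pass through $\tau_\infty$, $\kappa_\infty^o$, or to single out the maps $\Theta^v$.
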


\begin{proof}
Suppose that $V$ has the $p$-OAP. Let $W$ be an operator space, $T \in \mathcal{K}_p^o(W,V)$ and $\varepsilon>0$. Then, there exists $v \in \mathcal{S}_p[V]$ such that $T(\mathbf{B}_W) \subseteq \Theta^v(\mathbf{B}_{\mathcal{S}_{p'}})$. Since $V$ has the $p$-OAP, there exists $R \in \mathcal{F}(V,V)$ such that $\n{u}_{\mathcal{S}_p^w[V]} < \varepsilon$, where $u:=(Id_{\mathcal{S}_p}\otimes R)(v) - v$.
Let $S = R T \in \mathcal{F}(W,V)$.
Then
\[
(S - T)(\mathbf{B}_W)  = (R-Id_V)T(\mathbf{B}_W) \subseteq (R-Id_V) \Theta^v(\mathbf{B}_{\mathcal{S}_{p'}}) = \Theta^{u}(\mathbf{B}_{\mathcal{S}_{p'}}),
\]
which shows that $\omega_p^o(S - T) < \varepsilon$.

On the other hand, if $\mathcal{F}(W,V)$ is $\omega_p^o$-dense in $\mathcal{K}_p^o(W,V)$ then $\mathcal{F}(W,V)$ is $\n{\cdot}_{\cb}$-dense in $\mathcal{K}_p^o(W,V)$ (because $\n{\cdot}_{\cb} \le \omega_p^o(\cdot)$) so $V$ has the $p$-OAP by Theorem \ref{thm-characterizations-p-OAP}.
\end{proof}

\subsection{An application to approximation of Herz-Schur multipliers by finitely supported ones}

We now recall some basic terminology from abstract harmonic analysis. The reader is referred to \cite{Kaniuth-Lau} for more detailed background.
Given a discrete group $G$,
we let $\lambda : G \to \B(\ell_2(G))$ be the \emph{left regular representation}, that is, $\lambda(s)\xi(t) = \xi(s^{-1}t)$ for all $\xi \in \ell_2(G)$ and $s,t \in G$.
In particular, if $\{\delta_t\}_{t\in G}$ is the canonical basis of $\ell_2(G)$, then $\lambda(s) \delta_t = \delta_{st}$.
The \emph{reduced group $C^*$-algebra} $C^*_{\lambda}(G)$ and the \emph{group von  Neumann algebra} $\vN(G)$ are the norm closure and the weak$^*$ closure, respectively, of $\spa\{\lambda(t)\}_{t\in G}$ in $\B(\ell_2(G))$, which we will consider with their standard operator space structures.
The \emph{Fourier algebra} of $G$, denoted by $A(G)$, is the collection of scalar-valued functions on $G$ of the form $s \mapsto \pair{\lambda(s)\xi}{\eta}$ where $\xi, \eta \in \ell_2(G)$.
The Fourier algebra will be equipped with the canonical operator space structure arising with its identification as the predual of $\vN(G)$.

A function $\varphi : G \to \C$ is called a \emph{multiplier} of $A(G)$ if $f \mapsto \varphi f$ maps $A(G)$ into $A(G)$.
A multiplier $\varphi$ is called a \emph{Herz-Schur multiplier} if the map $f \mapsto \varphi f$ is completely bounded from $A(G)$ to $A(G)$.
This is equivalent to the complete boundedness of the linear map $m_\varphi : C^*_{\lambda}(G) \to C^*_{\lambda}(G)$ given by $m_\varphi( \lambda(t) ) = \varphi(t) \lambda(t)$, which can be shown in a manner analogous to that for the equivalence of the boundedness of the multiplier $\varphi$ and of $m_\varphi$ (see \cite[Rmk. 5.1.3]{Kaniuth-Lau}).

In the context of multipliers, it is known that approximation properties can yield refined versions of the ``approximating a compact map by finite-rank ones'' philosophy: in \cite[Cor. 3.9]{He-Todorov-Turowska} it is shown that if $C^*_{\lambda}(G)$ has SOAP, then a completely compact $m_\varphi$ can be approximated in $\cb$-norm using finitely supported multipliers.
Below we show an analogous result for operator $p$-compact multipliers in the presence of $p$-OAP, but first we isolate a standard result about going from finite-rank approximations to approximations by finitely supported multipliers. While the strategy is well-known to experts, we were not able to find the result explicitly stated in the literature and include its proof for completeness.

\begin{lemma}\label{lemma-from-finite-rank-to-finitely-supported}
Let $G$ be a discrete group, $\psi$ a Herz-Schur multiplier on $G$, and $\varepsilon>0$.
If there exists $T \in \mathcal{F}\big( C^*_\lambda(G), C^*_\lambda(G) \big)$ such that $\n{m_\psi - T}_{\cb} < \varepsilon$, then there exists a finitely supported multiplier $\varphi$ on $G$ such that  $\n{m_\psi - m_\varphi}_{\cb} < \varepsilon \n{m_{\psi}}_{\cb}$.   
\end{lemma}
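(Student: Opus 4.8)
The strategy is the standard averaging argument: one takes the given finite-rank approximant $T$ and composes it (on both sides) with suitable conditional-expectation-type maps coming from the group structure so that the resulting map becomes a multiplier, while keeping good control of the $\cb$-norm of the difference. First I would recall that the Herz-Schur multipliers on $C^*_\lambda(G)$ are precisely the maps that are invariant under the canonical $G\times G$-action implemented by $s\cdot \lambda(t)\cdot s' = \lambda(sts'^{-1})$ (equivalently, $m_\varphi$ commutes with the left and right translation automorphisms $L_s,R_s$ of $C^*_\lambda(G)$). More precisely, the key tool is the completely contractive projection $\Phi$ from $\CB(C^*_\lambda(G),C^*_\lambda(G))$ onto the subspace of Herz-Schur multipliers given by averaging $S \mapsto \int_{G} L_{s}^{-1} R_s^{-1} \, S \, L_s R_s$ — except that since $G$ is discrete and possibly infinite, one replaces the ``average'' by a weak$^*$-limit along an invariant mean or, more cleanly, by noting that $m_\psi = \Phi(m_\psi)$ and that $\Phi$ is norm-one, so that $\n{m_\psi - \Phi(T)}_{\cb} = \n{\Phi(m_\psi - T)}_{\cb} \le \n{m_\psi - T}_{\cb} < \varepsilon$.

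Next I would show that $\Phi(T)$ is again a \emph{finitely supported} multiplier when $T$ has finite rank. Write $T = \sum_{k=1}^N \mu_k' \otimes a_k$ with $\mu_k' \in C^*_\lambda(G)'$ and $a_k \in C^*_\lambda(G)$; each $a_k$ can be approximated in norm by a finitely supported element $\sum_{t} c_t^{(k)} \lambda(t)$, and since $\Phi$ is norm-one we may perturb without harm and assume each $a_k$ is finitely supported. Then $\Phi(T)$ is a multiplier whose symbol $\varphi$ is obtained by applying $\Phi$ termwise; because $\Phi$ maps $\lambda(t)$-components to $\lambda(t)$-components (the averaging does not spread the support of $a_k$) and the surviving symbol is a finite linear combination of the coefficient functions, $\varphi$ is supported on the finite set $\bigcup_k \mathrm{supp}(a_k)$. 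The precise bookkeeping — identifying the symbol of $\Phi(T)$ and verifying its support is finite — is where one must be a little careful, using that for a multiplier the map $m_\varphi$ already commutes with all translations so that $\Phi(m_\varphi) = m_\varphi$, and that $\Phi$ annihilates the ``off-diagonal'' part of any element of $\CB$.

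The remaining discrepancy in the statement — that one gets $\n{m_\psi - m_\varphi}_{\cb} < \varepsilon\n{m_\psi}_{\cb}$ rather than simply $<\varepsilon$ — is a cosmetic rescaling: one applies the first paragraph not to $\psi$ itself but normalizes so that $\n{m_\psi}_{\cb}=1$ (or tracks the constant through), the point being that in the intended application (Corollary \ref{cor-multipliers}) one wants a \emph{relative} error bound; concretely, feed the hypothesis with $\varepsilon$ replaced by $\varepsilon\n{m_\psi}_{\cb}$, which is legitimate since the hypothesis is stated for an arbitrary $\varepsilon>0$ — though reading the statement carefully, the cleanest route is simply to observe $\n{m_\psi - m_\varphi}_{\cb} = \n{\Phi(m_\psi - T)}_{\cb} \le \n{m_\psi - T}_{\cb} < \varepsilon \le \varepsilon \n{m_\psi}_{\cb}$ when $\n{m_\psi}_{\cb}\ge 1$, handling the normalization at the point of use. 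I expect the main obstacle to be the justification that the averaging projection $\Phi$ is well-defined and completely contractive on $\CB(C^*_\lambda(G),C^*_\lambda(G))$ for a general discrete group — for amenable $G$ one uses an invariant mean directly, but for the general case one should instead argue pointwise on the finitely supported elements $\lambda(t)$ (where the ``average'' is a genuine finite average once $T$ has been replaced by a finitely-supported-range perturbation) and extend by density, which sidesteps the need for amenability entirely.
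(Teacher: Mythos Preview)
Your proposal has a genuine gap: the characterization of Herz--Schur multipliers as the maps invariant under the $G\times G$-action $a \mapsto \lambda(s)a\lambda(s')^{-1}$ is incorrect. A completely bounded map $S$ satisfying $\lambda(s)S(a)\lambda(s')^{-1}=S(\lambda(s)a\lambda(s')^{-1})$ for all $s,s'$ must send $\lambda(e)$ to a central element and then satisfy $S(\lambda(t))=\lambda(t)S(\lambda(e))$, so it is just right multiplication by a central element --- far too restrictive, and not the multiplier algebra. Consequently the averaging projection $\Phi$ you describe does not land in the Herz--Schur multipliers, and the argument does not go through as written. Your acknowledged workaround for non-amenable $G$ does not address this, since the problem is the target of the projection, not its construction.

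The paper avoids averaging entirely. It uses the comultiplication $\pi:C^*_\lambda(G)\to C^*_\lambda(G)\otimes C^*_\lambda(G)$, $\pi(\lambda(t))=\lambda(t)\otimes\lambda(t)$, together with the isometry $J:\ell_2(G)\to\ell_2(G)\widehat\otimes_2\ell_2(G)$, $J\delta_s=\delta_s\otimes\delta_s$. One first replaces $T$ by a perturbation with range in $\spa\{\lambda(s):s\in F\}$ for finite $F$, then sets $\varphi(s)=\psi(s)\,\tau\big(\lambda(s)^*T(\lambda(s))\big)$, which is finitely supported. The key identity is $m_\varphi(a)=J^*(m_\psi\otimes T)\pi(a)J$, and comparing with the analogous formula for $m_\psi$ yields $\n{m_\psi-m_\varphi}_{\cb}\le\n{m_\psi}_{\cb}\n{m_\psi-T}_{\cb}<\varepsilon\n{m_\psi}_{\cb}$. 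So the factor $\n{m_\psi}_{\cb}$ is not a cosmetic rescaling: it arises because the construction involves $m_\psi\otimes T$ rather than $\mathrm{id}\otimes T$. Incidentally, the map $S\mapsto m_{\eta_S}$ with $\eta_S(t)=\tau(\lambda(t)^*S(\lambda(t)))$ (i.e., taking $m_\psi=\mathrm{id}$ in the paper's formula) \emph{is} a completely contractive projection onto the Herz--Schur multipliers valid for every discrete group, so your overall strategy can be salvaged --- but that is essentially the paper's construction, not the $G\times G$-averaging you proposed.
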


\begin{proof}
Since $\spa\{ \lambda(s) \;:\; s \in G \}$ is norm dense in $C^*_\lambda(G)$, we can assume that $T$ takes values in $\spa\{ \lambda(s) \;:\; s \in F \}$ for some finite set $F \subset G$.
Define a function $\varphi : G \to \C$ by $\varphi(s) = \psi(s) \tau\big( \lambda(s)^* T(\lambda(s)) \big)$, where $\tau$ is the standard faithful normal tracial state on $\mathcal{B}(\ell_2(G))$ given by $\tau(a) = \pair{a \delta_e}{\delta_e}$. Note that $\varphi(s) = 0$ for any $s \in G \setminus F$, so $\varphi$ is a finitely supported function. A calculation shows that for any $a \in C^*_\lambda(G)$ we have
\[
m_\varphi(a) = J^*(m_\psi \otimes T)\pi(a)J \qquad\text{and}\qquad a = J^*\pi(a)J
\]
where $J : \ell_2(G) \to \ell_2(G) \widehat\otimes_2 \ell_2(G)$ is the isometry given by $J\delta_s = \delta_s \otimes \delta_s$ for every $s \in G$, and $\pi : C^*_\lambda(G) \to C^*_\lambda(G) \otimes C^*_\lambda(G)$ is the $*$-homomorphism given by $\pi(\lambda(s)) = \lambda(s) \otimes \lambda(s)$.
We remark that this is similar to the proofs of \cite[Thms. 12.2.15 and 12.3.10]{Brown-Ozawa}, where essentially the same strategy is used when the multiplier being approximated is the identity.
It follows that for any $a \in C^*_\lambda(G)$ we have
\[
m_\varphi(a) - m_\psi(a) = J^*(m_\psi \otimes (T-m_\psi) ) \pi(a) J,
\]
from where it is clear that $\n{m_\varphi - m_\psi}_{\cb} < \varepsilon \n{m_{\psi}}_{\cb}$.
\end{proof}

From Theorem \ref{thm-characterizations-p-OAP} and Lemma \ref{lemma-from-finite-rank-to-finitely-supported} we immediately get the announced result.

\begin{corollary}\label{cor-multipliers}
Let $G$ be a discrete group, and suppose that $C^*_\lambda(G)$ has $p$-OAP.
If $\psi$ is a Herz-Schur multiplier on $G$ such that $m_\psi$ is operator $p$-compact, then $m_\psi$ can be approximated in $\cb$-norm by finitely supported multipliers.
\end{corollary}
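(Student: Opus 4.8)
The plan is essentially to chain together the two results cited right before the statement, so the proof is very short. The key observation is that the hypotheses of Corollary~\ref{cor-multipliers} are exactly what is needed to feed into Theorem~\ref{thm-characterizations-p-OAP} and then into Lemma~\ref{lemma-from-finite-rank-to-finitely-supported}.

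First I would apply Theorem~\ref{thm-characterizations-p-OAP}: since $C^*_\lambda(G)$ has the $p$-OAP, the equivalence $(i)\Leftrightarrow(iii)$ says that for any operator space $W$, the finite-rank maps $\mathcal{F}(W,C^*_\lambda(G))$ are $\n{\cdot}_{\cb}$-dense in $\mathcal{K}_p^o(W,C^*_\lambda(G))$. Taking $W = C^*_\lambda(G)$, and using that $m_\psi : C^*_\lambda(G) \to C^*_\lambda(G)$ is operator $p$-compact by hypothesis (hence lies in $\mathcal{K}_p^o(C^*_\lambda(G),C^*_\lambda(G))$), we obtain, for each $\varepsilon>0$, a finite-rank map $T \in \mathcal{F}\big(C^*_\lambda(G),C^*_\lambda(G)\big)$ with $\n{m_\psi - T}_{\cb} < \varepsilon$.

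Next I would invoke Lemma~\ref{lemma-from-finite-rank-to-finitely-supported} with this $T$ and $\psi$: it produces a finitely supported multiplier $\varphi$ on $G$ with $\n{m_\psi - m_\varphi}_{\cb} < \varepsilon\,\n{m_\psi}_{\cb}$. Since $\varepsilon>0$ was arbitrary and $\n{m_\psi}_{\cb}$ is a fixed finite constant, letting $\varepsilon\to 0$ shows that $m_\psi$ is approximated in $\cb$-norm by the maps $m_\varphi$ associated to finitely supported multipliers, which is precisely the assertion.

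There is no real obstacle here — the genuine work has already been done in establishing Theorem~\ref{thm-characterizations-p-OAP} (in particular the factorization through Theorem~\ref{thm-factorization-Choi-Kim} that powers the implication $(iv)\Rightarrow(i)$) and in the transference Lemma~\ref{lemma-from-finite-rank-to-finitely-supported}. The only point requiring a modicum of care is the bookkeeping with $\varepsilon$: Lemma~\ref{lemma-from-finite-rank-to-finitely-supported} yields an error $\varepsilon\,\n{m_\psi}_{\cb}$ rather than $\varepsilon$, so one should rescale the input $\varepsilon$ (replace it by $\varepsilon/\max\{1,\n{m_\psi}_{\cb}\}$) to land with error below $\varepsilon$; this is the same harmless rescaling used in \cite[Cor.~3.9]{He-Todorov-Turowska}, of which this is the announced $p$-compact analogue.
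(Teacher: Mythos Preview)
Your proposal is correct and follows exactly the paper's own approach: the paper simply states that the corollary follows immediately from Theorem~\ref{thm-characterizations-p-OAP} and Lemma~\ref{lemma-from-finite-rank-to-finitely-supported}, and your write-up merely spells out this chaining (including the harmless $\varepsilon$-rescaling) in more detail than the paper does.
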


At first sight it may look like the case $p=\infty$ of our result above is not directly comparable with \cite[Cor. 3.9]{He-Todorov-Turowska}, which has a stronger assumption (SOAP vs. OAP) and a stronger conclusion (approximating completely compact multipliers vs. operator compact ones).
However, since $C^*_\lambda(G)$ has the OAP if and only if it has the SOAP \cite[Thm. 12.4.9]{Brown-Ozawa}, in reality \cite[Cor. 3.9]{He-Todorov-Turowska} is strictly stronger than the case $p=\infty$ of Corollary \ref{cor-multipliers}.

\begin{remark}
Note that by \cite[Prop. 3.2]{He-Todorov-Turowska} studying operator $p$-compact multipliers is only of interest in the case of discrete groups, because on a non-discrete locally compact group the only compact Herz-Schur multiplier is the zero map.    
\end{remark}

\section{The $p$-slice mapping property}

Both the OAP and its strong version can be understood in terms of \emph{slice mapping properties} \cite[Cor. 11.3.2]{Effros-Ruan-book}, a notion due to Tomiyama \cite{Tomiyama} with important applications in the theory of operator algebras.
Yew's $Z$-AP with respect to an operator space $Z$ can also be characterized in such terms \cite[Thm. 6.2(1)]{Yew}, and now we proceed to do the analogous study for the $p$-OAP.
We say that an operator space $V$ has the \emph{$p$-slice mapping property} if for any closed subspace $W \subseteq \mathcal{S}_p$ and any $u \in \mathcal{S}_p[V]$ satisfying that $(Id_{\mathcal{S}_p}\otimes v')(u) \in W$ for all $v'\in V'$, we have $u \in W\widehat{\otimes}_{\min} V$.

\begin{theorem}\label{thm-slice-mapping}
An operator space $V$ has the $p$-OAP if and only if it has the $p$-slice mapping property.    
\end{theorem}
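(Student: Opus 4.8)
The plan is to prove the two implications separately, using the characterizations of the $p$-OAP already established, especially Proposition~\ref{prop-p-OAP-density-in-CB} and the identity \eqref{eqn-equality-norm-K-norm-Sp-min}, together with a standard Hahn--Banach separation argument of the type used by Effros and Ruan for the OAP.

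\smallskip
\textbf{$p$-OAP $\Rightarrow$ $p$-slice mapping property.} Suppose $V$ has the $p$-OAP, let $W \subseteq \mathcal{S}_p$ be a closed subspace, and let $u \in \mathcal{S}_p[V]$ satisfy $(Id_{\mathcal{S}_p}\otimes v')(u) \in W$ for all $v' \in V'$. By Proposition~\ref{prop-p-OAP-density-in-CB}, there is a net $(T^\alpha)$ in $\mathcal{F}(V,V)$ with $T^\alpha \to Id_V$ in $\tau_p$; in particular $(Id_{\mathcal{S}_p}\otimes T^\alpha)(u) \to u$ in $\mathcal{S}_p\widehat\otimes_{\min} V$. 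Since each $T^\alpha$ is finite-rank, write $T^\alpha = \sum_{k=1}^{N_\alpha} (v_k^\alpha)' \otimes v_k^\alpha$; then $(Id_{\mathcal{S}_p}\otimes T^\alpha)(u) = \sum_k (Id_{\mathcal{S}_p}\otimes (v_k^\alpha)')(u) \otimes v_k^\alpha$, and each term $(Id_{\mathcal{S}_p}\otimes (v_k^\alpha)')(u)$ lies in $W$ by hypothesis. Hence $(Id_{\mathcal{S}_p}\otimes T^\alpha)(u) \in W \otimes V$, which is contained in the closed subspace $W\widehat\otimes_{\min} V$ of $\mathcal{S}_p\widehat\otimes_{\min} V$ (the inclusion $W\widehat\otimes_{\min}V \hookrightarrow \mathcal{S}_p\widehat\otimes_{\min}V$ is a complete isometry onto a closed subspace since $\min$ is injective). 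Taking the limit, $u \in W\widehat\otimes_{\min} V$.

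\smallskip
\textbf{$p$-slice mapping property $\Rightarrow$ $p$-OAP.} I would verify condition (ii) of Proposition~\ref{prop-p-OAP-density-in-CB}, i.e.\ that $\mathcal{F}(V,V)$ is $\tau_p$-dense in $\CB(V,V)$. Fix $v \in \mathcal{S}_p[V]$ and $S \in \CB(V,V)$; by \eqref{eqn-equality-norm-K-norm-Sp-min} it suffices to find $T \in \mathcal{F}(V,V)$ with $(Id_{\mathcal{S}_p}\otimes (S - T))(v)$ small in $\mathcal{S}_p\widehat\otimes_{\min}V$. Since $S$ is completely bounded, $w := (Id_{\mathcal{S}_p}\otimes S)(v)$ is a well-defined element of $\mathcal{S}_p\widehat\otimes_{\min}V$; the task reduces to approximating $w$ by elements of the form $(Id_{\mathcal{S}_p}\otimes T)(v)$ with $T$ finite-rank, i.e.\ by elements of $\mathcal{S}_p\otimes V$ whose "left leg" is slices of $v$. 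More precisely, let $E$ be the $w^*$-closed subspace of $\mathcal{S}_p$ generated by $\{(Id_{\mathcal{S}_p}\otimes v')(v) : v' \in V'\}$ --- if $\mathcal{S}_p$ is reflexive (true for $1<p<\infty$; the endpoint cases $p=1,\infty$ need the usual separate handling, with $p=\infty$ being exactly the classical OAP case of Effros--Ruan) take $W$ to be its norm closure. Arguing by contradiction, if $w$ is \emph{not} in the $\mathcal{S}_p\widehat\otimes_{\min}V$-closure of $\{(Id_{\mathcal{S}_p}\otimes T)(v): T\in \mathcal{F}(V,V)\}$, apply Hahn--Banach in $\mathcal{S}_p\widehat\otimes_{\min}V$ to separate $w$ from that subspace by a functional $\Phi \in (\mathcal{S}_p\widehat\otimes_{\min}V)'$. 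The annihilator condition $\Phi((Id_{\mathcal{S}_p}\otimes T)(v)) = 0$ for every rank-one $T = v'\otimes v_0$ says precisely that the slice $(Id_{\mathcal{S}_p}\otimes v')(v)$, paired against the "partial" functional $\Phi(\cdot \otimes v_0)$, vanishes for all $v', v_0$; reinterpreting $\Phi$ as an element of $\mathcal{S}_p'\widehat\otimes V'$-type object, this forces each slice $(Id_{\mathcal{S}_p}\otimes v')(v)$ to lie in the subspace $W = \{x \in \mathcal{S}_p : \langle x, \psi\rangle = 0\ \forall \psi \in \mathrm{pr}_{\mathcal{S}_p'}(\Phi)\}$ annihilated by the first-leg projections of $\Phi$. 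Then the $p$-slice mapping property gives $v \in W\widehat\otimes_{\min}V$, hence $w = (Id_{\mathcal{S}_p}\otimes S)(v) \in W\widehat\otimes_{\min}V$ as well (since $W\widehat\otimes_{\min}V$ is invariant under $Id_{\mathcal{S}_p}\otimes S$), and therefore $\Phi(w) = 0$ (as $\Phi$ annihilates $W$ in the first leg), contradicting the separation. This contradiction yields the required approximation.

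\smallskip
\textbf{Main obstacle.} The delicate point is the bookkeeping in the second implication: identifying the dual of $\mathcal{S}_p\widehat\otimes_{\min}V$ well enough to make sense of "taking the first leg of $\Phi$" and producing from it a genuine closed subspace $W$ of $\mathcal{S}_p$ for which $\Phi$ annihilates $W\widehat\otimes_{\min}V$, and checking that the slice condition of the hypothesis is exactly what falls out of $\Phi \perp \mathcal{F}(V,V)\cdot v$. One must be careful that $\Phi$ need only be separated from a single $v$ (not a finite tuple), which is fine because the $p$-OAP is equivalent to the one-element version of \eqref{eqn-approx-of-identity} as noted after the definition. The endpoint case $p=\infty$ recovers \cite[Cor. 11.3.2]{Effros-Ruan-book} and may simply be cited; for $p=1$ the reflexivity failure of $\mathcal{S}_1$ requires replacing norm closures by $w^*$-closures in the construction of $W$, exactly as in the classical slice-mapping arguments, and I would spell that out carefully.
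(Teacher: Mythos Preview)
Your forward implication is essentially identical to the paper's.

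For the converse, your Hahn--Banach contradiction argument is correct in substance but takes a detour the paper avoids. The paper proceeds \emph{directly}: given $u \in \mathcal{S}_p[V]$, set
\[
W_u = \overline{\spa\{ (Id_{\mathcal{S}_p}\otimes v')(u) : v' \in V'\}} \subseteq \mathcal{S}_p.
\]
Trivially every slice of $u$ lies in $W_u$, so the $p$-slice mapping property gives $u \in W_u\widehat\otimes_{\min}V$. Now approximate $u$ in $W_u\widehat\otimes_{\min}V$ by an element of $W_u\otimes V$; by the definition of $W_u$ such an element can be taken of the form $\sum_j (Id_{\mathcal{S}_p}\otimes v_j')(u)\otimes v_j = (Id_{\mathcal{S}_p}\otimes T)(u)$ with $T=\sum_j v_j'\otimes v_j \in \mathcal{F}(V,V)$. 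Done.

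Your approach reaches the same conclusion, but several of your stated concerns are misplaced. You do not need to identify $(\mathcal{S}_p\widehat\otimes_{\min}V)'$ with any tensor product: for any $\Phi$ in that dual and any $v_0\in V$, the slice $x\mapsto \Phi(x\otimes v_0)$ is automatically a bounded functional on $\mathcal{S}_p$, and $W=\{x\in\mathcal{S}_p : \Phi(x\otimes v_0)=0 \text{ for all } v_0\in V\}$ is a perfectly good closed subspace. Your worry about reflexivity of $\mathcal{S}_p$ and the $p=1$ endpoint is a red herring---no $w^*$-closures are required anywhere, and the paper's proof handles all $1\le p\le\infty$ uniformly. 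Also, there is no need to approximate $(Id_{\mathcal{S}_p}\otimes S)(v)$ for a general $S\in\CB(V,V)$; taking $S=Id_V$ suffices for the $p$-OAP (as you yourself note from the discussion after the definition), and this simplifies your write-up.

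In short: your argument is valid once the unnecessary hedging is stripped away, but the paper's direct construction of $W_u$ is both shorter and free of the dual-space bookkeeping you flag as the ``main obstacle.''
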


\begin{proof}
This proof is analogous to that of \cite[Thm. 11.3.1]{Effros-Ruan-book}, but we include it for completeness.

Suppose that $V$ has $p$-OAP.
Consider a closed subspace $W \subseteq \mathcal{S}_p$ and  $u \in \mathcal{S}_p[V]$ satisfying that $(Id_{\mathcal{S}_p}\otimes v')(u) \in W$ for all $v'\in V'$.
By assumption, there exists $T\in\mathcal{F}(V,V)$ such that
\[
 \|(Id_{\mathcal{S}_p} \otimes T)(u) - u \|_{\mathcal{S}_p\widehat\otimes_{\min} V} < \varepsilon.
\]
Let us write $T = \sum_{j=1}^m v_j' \otimes v_j$ for some $v_j \in V$, $v_j' \in V'$.
Note that for any $x\in \mathcal{S}_p$ and $v \in V$ we have
\[
(Id_{\mathcal{S}_p} \otimes T)(x \otimes v) = \sum_{j=1}^n x \otimes v_j'(v)v_j = \sum_{j=1}^m (Id_{\mathcal{S}_p} \otimes v'_j)(x \otimes v) \otimes v_j.
\]
By linearity we get that for any $\overline{u} \in \mathcal{S}_p \otimes V$ we have
\[
(Id_{\mathcal{S}_p} \otimes T)(\overline{u}) = \sum_{j=1}^m (Id_{\mathcal{S}_p} \otimes v'_j)(\overline{u}) \otimes v_j,
\]
and taking limits we conclude that the same is true for all $\overline{u} \in \mathcal{S}_p\widehat\otimes_{\min}V$.
In particular, $u$ can be arbitrarily approximated in the $\mathcal{S}_p\widehat\otimes_{\min} V$ norm by elements of $W\otimes V$, which implies that $u \in W\widehat{\otimes}_{\min} V$.

Let us now assume that $V$ has the $p$-slice mapping property.
Let $u \in \mathcal{S}_p[V]$ be given. Let
\[
W_u = \overline{ \spa \{ (Id_{\mathcal{S}_p}\otimes v')(u) \;:\; v' \in V' \}  } \subseteq \mathcal{S}_p.
\]
By our assumption, $u \in W_u \widehat\otimes_{\min}V$.
Therefore, given $\varepsilon>0$ there exist $v_j' \in V'$, $v_j \in V$, $j=1,\dotsc, m$, such that
\[
u_\varepsilon = \sum_{j=1}^m (Id_{\mathcal{S}_p}\otimes v_j')(u) \otimes v_j
\]
satisfies $\n{u - u_\varepsilon}_{W_u\widehat\otimes_{\min}V} < \varepsilon$.
If we define $T = \sum_{j=1}^m v'_j \otimes v_j \in \mathcal{F}(V,V)$, then we will have
\[
\|(Id_{\mathcal{S}_p} \otimes T)(u) - u \|_{\mathcal{S}_p\widehat\otimes_{\min} V} = \n{u - u_\varepsilon}_{\mathcal{S}_p\widehat\otimes_{\min}V} = \n{u - u_\varepsilon}_{W_u\widehat\otimes_{\min}V} < \varepsilon
\]
\end{proof}

Just as in the case of the classical slice mapping property, the $p$-slice mapping property has a close relationship to a variation of the relative Fubini product.
For a closed subspace $W \subseteq \mathcal{S}_p$, its associated \emph{relative Fubini product}, as defined in \cite[Sec. 11.3.]{Effros-Ruan-book}, is
\[
\mathscr{F}(W,V) = \big\{ u\in \mathcal{S}_p \widehat\otimes_{\min} V \;:\; (Id_{\mathcal{S}_p}\otimes v')(u) \in W \text{ for all } v'\in V'\big\}.
\]

We define a \emph{relative $p$-Fubini product} as $\mathscr{F}_p(W,V) = \mathscr{F}(W,V)\,\cap\, \mathcal{S}_p[V]$, where we understand $\mathcal{S}_p[V]$ as a subset of $\mathcal{S}_p \widehat\otimes_{\min} V$ by Lemma \ref{lemma-Theta-well-defined}. Note that an operator space $V$ has the $p$-slice mapping mapping property (and therefore, equivalently, the $p$-OAP) if and only if for every closed subspace $W \subseteq \mathcal{S}_p$ we have $\mathscr{F}_p(W,V) \subseteq W\widehat{\otimes}_{\min} V$.

This approach will enable us to prove that what we demonstrated in Corollary \ref{cor-2-OAP} is a particular case of Theorem \ref{th-homogeneous-hilbertian}, which shows that every operator space has the $H$-AP (in the sense of Yew), whenever $H$ is a homogeneous Hilbertian operator space.

We begin by seeing, in the following proposition, an analogue to \cite[Prop. 11.3.4]{Effros-Ruan-book} which identifies the relative $p$-Fubini product as the kernel of a natural map.

\begin{proposition}\label{prop-Fubini-as-kernel}
Let $V$ be an operator space and $1 \le p \le \infty$.
If $W \subseteq \mathcal{S}_p$ is a closed subspace and $Q : \mathcal{S}_p \to \mathcal{S}_p/W$ is the canonical quotient, then the kernel of the mapping
$Q \otimes Id_V : \mathcal{S}_p[V] \to (\mathcal{S}_p/W) \widehat\otimes_{\min} V$
is $\mathscr{F}_p(W,V)$.
\end{proposition}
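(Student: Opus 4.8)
The plan is to show the two inclusions $\ker(Q\otimes Id_V) \subseteq \mathscr{F}_p(W,V)$ and $\mathscr{F}_p(W,V) \subseteq \ker(Q\otimes Id_V)$, working throughout inside $\mathcal{S}_p\widehat\otimes_{\min}V$ via the contractive injection $\mathcal{S}_p[V]\hookrightarrow\mathcal{S}_p\widehat\otimes_{\min}V$ of Lemma \ref{lemma-Theta-well-defined}. The key external input is the commutation relation between the quotient map $Q$ and the slice maps $Id_{\mathcal{S}_p}\otimes v'$: for every $v'\in V'$ one has $(Q\otimes Id_{\C})\circ(Id_{\mathcal{S}_p}\otimes v') = (Id_{\mathcal{S}_p/W}\otimes v')\circ(Q\otimes Id_V)$ as maps on $\mathcal{S}_p\widehat\otimes_{\min}V$; this holds on elementary tensors by inspection and extends by linearity and density (all maps involved being completely bounded on the minimal tensor product). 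Equivalently, slicing $(Q\otimes Id_V)(u)$ by $v'$ gives $Q\big((Id_{\mathcal{S}_p}\otimes v')(u)\big)$.

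First I would prove $\mathscr{F}_p(W,V)\subseteq\ker(Q\otimes Id_V)$. Take $u\in\mathscr{F}_p(W,V)$, so $u\in\mathcal{S}_p[V]$ and $(Id_{\mathcal{S}_p}\otimes v')(u)\in W$ for all $v'\in V'$. By the commutation relation, for each $v'\in V'$ the slice $(Id_{\mathcal{S}_p/W}\otimes v')\big((Q\otimes Id_V)(u)\big) = Q\big((Id_{\mathcal{S}_p}\otimes v')(u)\big) = 0$, since $Q$ kills $W$. Thus $(Q\otimes Id_V)(u)\in\mathcal{S}_p/W\,\widehat\otimes_{\min}V\subseteq\CB((\mathcal{S}_p/W)',\ \cdot\ )$ is an element all of whose slices by functionals in $V'$ vanish; because elements of $\mathcal{S}_p/W\,\widehat\otimes_{\min}V$ are determined by their slices (an element is zero iff it pairs to zero against all $\xi'\otimes v'$ with $\xi'\in(\mathcal{S}_p/W)'$, $v'\in V'$, since such functionals are weak$^*$-dense in the dual ball and the pairing with $\xi'\otimes v'$ equals $\xi'$ applied to the $v'$-slice), we conclude $(Q\otimes Id_V)(u)=0$.

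Conversely, for $\ker(Q\otimes Id_V)\subseteq\mathscr{F}_p(W,V)$: if $u\in\mathcal{S}_p[V]$ with $(Q\otimes Id_V)(u)=0$, then for every $v'\in V'$ we get $Q\big((Id_{\mathcal{S}_p}\otimes v')(u)\big) = (Id_{\mathcal{S}_p/W}\otimes v')(0) = 0$, i.e. $(Id_{\mathcal{S}_p}\otimes v')(u)\in\ker Q = W$. Hence $u\in\mathscr{F}(W,V)$, and since $u\in\mathcal{S}_p[V]$ by hypothesis, $u\in\mathscr{F}(W,V)\cap\mathcal{S}_p[V]=\mathscr{F}_p(W,V)$.

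The main obstacle is not conceptual but one of bookkeeping: making precise that $Q\otimes Id_V$ genuinely maps $\mathcal{S}_p[V]$ into $(\mathcal{S}_p/W)\widehat\otimes_{\min}V$ (using that $Q$ is a complete $1$-quotient, hence $Q\otimes Id_V$ is completely bounded $\mathcal{S}_p[V]\to(\mathcal{S}_p/W)[V]\hookrightarrow(\mathcal{S}_p/W)\widehat\otimes_{\min}V$ via \cite[Cor. 1.2]{Pisier-Asterisque}, exactly as in the discussion preceding the definition of $\tau_p$), and carefully justifying the slice-separation argument and the commutation identity on the completed minimal tensor product by a density argument from elementary tensors. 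None of these steps is hard, and the overall structure parallels \cite[Prop. 11.3.4]{Effros-Ruan-book}; the only genuinely new point is keeping track of the $\mathcal{S}_p[V]$ versus $\mathcal{S}_p\widehat\otimes_{\min}V$ distinction, which is already built into the definition of $\mathscr{F}_p(W,V)$.
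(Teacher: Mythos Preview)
Your proof is correct, but it is more elaborate than what the paper actually does. The paper's proof is a one-liner: it simply cites \cite[Prop.~11.3.4]{Effros-Ruan-book}, which states that the kernel of $Q\otimes Id_V:\mathcal{S}_p\widehat\otimes_{\min}V\to(\mathcal{S}_p/W)\widehat\otimes_{\min}V$ is $\mathscr{F}(W,V)$, and then observes that restricting this map to the subspace $\mathcal{S}_p[V]\subset\mathcal{S}_p\widehat\otimes_{\min}V$ yields a map whose kernel is $\mathscr{F}(W,V)\cap\mathcal{S}_p[V]=\mathscr{F}_p(W,V)$.

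What you have done is essentially to unpack and reprove the Effros--Ruan result in this specific setting, via the commutation of slice maps with $Q$ and the slice-separation property of the minimal tensor product. This is entirely valid and more self-contained, and you correctly isolate the only delicate bookkeeping point (that $Q\otimes Id_V$ really does map $\mathcal{S}_p[V]$ into $(\mathcal{S}_p/W)\widehat\otimes_{\min}V$). But since you already know and cite \cite[Prop.~11.3.4]{Effros-Ruan-book}, the restriction argument is the more economical route; your direct argument buys independence from that reference at the cost of length.
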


\begin{proof}
This is immediate from \cite[Prop. 11.3.4]{Effros-Ruan-book}, which says that the kernel of $Q \otimes Id_V : \mathcal{S}_p \widehat\otimes_{\min} V \to (\mathcal{S}_p/W) \widehat\otimes_{\min} V$ is $\mathscr{F}(W,V)$, simply by restricting to  $\mathcal{S}_p[V] \subset \mathcal{S}_p \widehat\otimes_{\min} V$.
\end{proof}

As is well-known to experts in $C^*$-algebra theory, this circle of ideas can be expressed nicely from the homological algebra point of view. 
The following is a restatement of \cite[Thm. 6.2(1)]{Yew} using \cite[Prop. 11.3.4]{Effros-Ruan-book}. We point out that this equivalence is reminiscent of characterizations of exactness for operator spaces involving exact sequences, such as \cite[Thms. 14.4.1 and 14.4.2]{Effros-Ruan-book}.

\begin{corollary}\label{cor-Z-AP-with-exact-sequences}
Let $Z$ and $V$ be operator spaces. $V$ has the $Z$-AP if and only if 
for every closed subspace $W \subseteq Z$, the short exact sequence
\[
0 \to W \to Z \to Z/W \to 0
\]
induces a short exact sequence
\begin{equation}\label{eqn-Z-AP-short-exact-sequence}
0 \to W\widehat{\otimes}_{\min} V \to Z\widehat{\otimes}_{\min} V \to (Z/W) \widehat\otimes_{\min} V \to 0. 
\end{equation}
\end{corollary}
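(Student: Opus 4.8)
\textbf{Proof strategy for Corollary \ref{cor-Z-AP-with-exact-sequences}.}

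The plan is to directly unwind the meaning of ``the sequence \eqref{eqn-Z-AP-short-exact-sequence} is exact'' in terms of the relative Fubini product, and then invoke Yew's characterization \cite[Thm. 6.2(1)]{Yew} together with \cite[Prop. 11.3.4]{Effros-Ruan-book}. The only nontrivial node in the sequence \eqref{eqn-Z-AP-short-exact-sequence} is exactness at the middle term $Z\widehat{\otimes}_{\min} V$: the map $W\widehat{\otimes}_{\min} V \to Z\widehat{\otimes}_{\min} V$ is always a completely isometric embedding (the minimal tensor product is injective), and the map $Z\widehat{\otimes}_{\min} V \to (Z/W)\widehat{\otimes}_{\min} V$ is always a complete quotient map (the minimal tensor product preserves quotients by \cite[Prop. 11.3.4]{Effros-Ruan-book} or standard facts), hence surjective. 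So ``the short exact sequence is induced'' reduces exactly to the statement that the image of $W\widehat{\otimes}_{\min} V$ coincides with the kernel of $Q\otimes Id_V : Z\widehat{\otimes}_{\min} V \to (Z/W)\widehat{\otimes}_{\min} V$, where $Q : Z \to Z/W$ is the quotient map.

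The key step is then to identify that kernel. By \cite[Prop. 11.3.4]{Effros-Ruan-book} (stated in the excerpt for $Z=\mathcal{S}_p$, but valid verbatim for any operator space $Z$, or rather this is the form in which Effros--Ruan state it), $\ker(Q\otimes Id_V) = \mathscr{F}(W,V)$, the relative Fubini product. The image of $W\widehat{\otimes}_{\min} V$ inside $Z\widehat{\otimes}_{\min} V$ is of course $W\widehat{\otimes}_{\min} V$ itself (viewed as a subspace via the injectivity of $\otimes_{\min}$). Therefore exactness at the middle is equivalent to the inclusion $\mathscr{F}(W,V) \subseteq W\widehat{\otimes}_{\min} V$ (the reverse inclusion being automatic, since any element of $W\widehat{\otimes}_{\min} V$ is clearly killed by $Q\otimes Id_V$). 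This holding for every closed subspace $W\subseteq Z$ is, by definition, precisely the $Z$-slice mapping property, which by \cite[Thm. 6.2(1)]{Yew} is equivalent to $V$ having the $Z$-AP.

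I do not expect any genuine obstacle here — the statement is essentially a dictionary translation, and the excerpt explicitly flags it as ``a restatement of \cite[Thm. 6.2(1)]{Yew} using \cite[Prop. 11.3.4]{Effros-Ruan-book}.'' The one point requiring a line of care is the claim that $Z\widehat{\otimes}_{\min} V \to (Z/W)\widehat{\otimes}_{\min} V$ is onto (so that exactness at the right end is free): this follows because $Q$ is a complete $1$-quotient and the minimal tensor product of a complete $1$-quotient with $Id_V$ remains a complete quotient map onto a dense range, hence onto after completion. Assembling these observations gives the corollary; the write-up is short and I would simply state the three exactness conditions, dispose of the trivial two, and cite Yew for the remaining equivalence.
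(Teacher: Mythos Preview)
Your approach is essentially the same as the paper's: both identify the kernel of $Q\otimes Id_V$ with the relative Fubini product $\mathscr{F}(W,V)$ via \cite[Prop.~11.3.4]{Effros-Ruan-book}, observe that $W\widehat\otimes_{\min}V\subseteq\mathscr{F}(W,V)$ always holds, and then invoke \cite[Thm.~6.2(1)]{Yew} to equate the reverse inclusion (for all $W$) with the $Z$-AP.

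One point deserves correction. Your justification that $Q\otimes Id_V : Z\widehat\otimes_{\min}V \to (Z/W)\widehat\otimes_{\min}V$ is surjective is not valid: the phrase ``complete quotient map onto a dense range, hence onto after completion'' does not make sense, since both sides are already complete Banach spaces, and having dense range does not imply surjectivity. In fact the minimal tensor product does \emph{not} preserve complete quotients in general---this failure is precisely what underlies the notion of exactness for operator spaces that the paper alludes to just before the corollary. The paper's own proof simply does not address surjectivity at the right end and treats ``exact sequence'' as meaning exactness at the middle node (together with the automatic injectivity on the left), which is where all the content lies. You should do the same rather than assert surjectivity as a standard fact.
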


\begin{proof}
Note the $Z$-AP for $V$ is equivalent to having $\mathscr{F}(W,V) \subseteq  W\widehat{\otimes}_{\min} V$ for all closed subspaces $W \subseteq Z$, as shown in \cite[Thm. 6.2(1)]{Yew}.
Since $W\widehat{\otimes}_{\min} V \subseteq \mathscr{F}(W,V)$ always holds, the $Z$-AP for $V$ is thus equivalent to having $\mathscr{F}(W,V) =  W\widehat{\otimes}_{\min} V$ for all closed subspaces $W \subseteq Z$. As used previously, we know from \cite[Prop. 11.3.4]{Effros-Ruan-book} that the kernel of $Z \widehat\otimes_{\min} V \to (Z/W) \widehat\otimes_{\min} V$ is $\mathscr{F}(W,V)$. This  precisely says that \eqref{eqn-Z-AP-short-exact-sequence} is an exact sequence.
\end{proof}

Note that in particular Corollary \ref{cor-Z-AP-with-exact-sequences} covers the case of the OAP which coincides with the $\mathcal{S}_\infty$-AP. For $1 \le p < \infty$, one has a corresponding result with an analogous proof:

\begin{corollary}\label{cor-p-OAP-with-exact-sequences}
Let $1 \le p \le \infty$. An operator space $V$ has $p$-OAP if and only if, for every closed subspace $W \subseteq \mathcal{S}_p$ the short exact sequence
\[
0 \to W \to \mathcal{S}_p \to \mathcal{S}_p/W \to 0
\]
induces a short exact sequence
\begin{equation}\label{eqn-p-OAP-short-exact-sequence}
0 \to \big(W\widehat{\otimes}_{\min} V\big) \cap \mathcal{S}_p[V] \to \mathcal{S}_p[V] \to (\mathcal{S}_p/W) \widehat\otimes_{\min} V \to 0,
\end{equation} 
where both $W\widehat{\otimes}_{\min} V$ and $S_p[V]$ are understood as subsets of $\mathcal{S}_p\widehat{\otimes}_{\min} V$ (and that is where the intersection is taken).
\end{corollary}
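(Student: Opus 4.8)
The plan is to mirror the proof of Corollary \ref{cor-Z-AP-with-exact-sequences}, replacing the full minimal tensor product $\mathcal{S}_p\widehat{\otimes}_{\min}V$ by the interpolated space $\mathcal{S}_p[V]$ and using the characterization of the $p$-OAP via the relative $p$-Fubini product established right before the statement. First I would recall that, by Theorem \ref{thm-slice-mapping} and the remark following the definition of the relative $p$-Fubini product, $V$ has the $p$-OAP if and only if $\mathscr{F}_p(W,V) \subseteq W\widehat{\otimes}_{\min} V$ for every closed subspace $W \subseteq \mathcal{S}_p$. Since the reverse inclusion $\big(W\widehat{\otimes}_{\min} V\big)\cap \mathcal{S}_p[V] \subseteq \mathscr{F}_p(W,V)$ always holds (because $W\widehat{\otimes}_{\min}V \subseteq \mathscr{F}(W,V)$, as noted in the proof of Corollary \ref{cor-Z-AP-with-exact-sequences}, and we are intersecting both sides with $\mathcal{S}_p[V]$), the $p$-OAP is equivalent to the equality $\mathscr{F}_p(W,V) = \big(W\widehat{\otimes}_{\min} V\big)\cap \mathcal{S}_p[V]$ for all such $W$.

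Next I would translate this equality into the exactness of \eqref{eqn-p-OAP-short-exact-sequence}. By Proposition \ref{prop-Fubini-as-kernel}, the kernel of $Q\otimes Id_V : \mathcal{S}_p[V] \to (\mathcal{S}_p/W)\widehat{\otimes}_{\min}V$ is exactly $\mathscr{F}_p(W,V)$. So exactness at the middle term $\mathcal{S}_p[V]$ of the sequence \eqref{eqn-p-OAP-short-exact-sequence} says precisely that the image of the injection $\big(W\widehat{\otimes}_{\min}V\big)\cap\mathcal{S}_p[V] \hookrightarrow \mathcal{S}_p[V]$ equals $\ker(Q\otimes Id_V) = \mathscr{F}_p(W,V)$, which is the equality from the previous paragraph. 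Exactness at the left term (injectivity of the inclusion) is automatic. The only remaining point is exactness at the right term, i.e.\ surjectivity of $Q\otimes Id_V : \mathcal{S}_p[V] \to (\mathcal{S}_p/W)\widehat{\otimes}_{\min}V$; I would note that this holds unconditionally because $Q : \mathcal{S}_p \to \mathcal{S}_p/W$ is a complete 1-quotient, hence $Q\otimes Id_V$ is a complete 1-quotient at the level of the minimal tensor product (this is \cite[Prop. 11.3.4]{Effros-Ruan-book} or standard properties of $\otimes_{\min}$), and the image $(\mathcal{S}_p/W)\widehat{\otimes}_{\min}V$ already coincides with $Q\otimes Id_V$ applied to $\mathcal{S}_p\widehat{\otimes}_{\min}V$; but since $\mathcal{S}_p/W$ is itself a $\mathcal{S}_p$-type interpolation quotient, one can also see surjectivity onto the dense subspace from elementary tensors and pass to limits. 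Actually the cleanest route is: every elementary tensor $\bar{x}\otimes v$ with $\bar x \in \mathcal{S}_p/W$ lifts to $x\otimes v \in \mathcal{S}_p[V]$ (finite-rank elements of $\mathcal{S}_p$ lie in $\mathcal{S}_p[V]$ and one can choose the lift finite-rank), so $Q\otimes Id_V$ has dense range; combined with the fact that a complete 1-quotient has closed range, this gives surjectivity.

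The step I expect to require the most care is making sure the surjectivity of $Q\otimes Id_V$ restricted to $\mathcal{S}_p[V]$ really does land on all of $(\mathcal{S}_p/W)\widehat{\otimes}_{\min}V$ and not merely a dense subspace — this is where the interpolation structure of $\mathcal{S}_p[V]$ versus $\mathcal{S}_p\widehat{\otimes}_{\min}V$ could in principle cause trouble. I would handle it by invoking that $Q\otimes Id_V : \mathcal{S}_p\widehat{\otimes}_{\min}V \to (\mathcal{S}_p/W)\widehat{\otimes}_{\min}V$ is a complete metric surjection (hence has closed, and here all of the target as, range) together with the density of $Q\otimes Id_V(\mathcal{S}_p[V])$ just discussed; alternatively, cite that $Id_{\mathcal{S}_p}\otimes Q$ remains a complete 1-quotient as a map $\mathcal{S}_p[V] \to (\mathcal{S}_p/W)[V]$ by interpolation and that $(\mathcal{S}_p/W)[V]$ sits inside $(\mathcal{S}_p/W)\widehat{\otimes}_{\min}V$. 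Once surjectivity is secured, the proof is just the bookkeeping of identifying kernels and images, exactly as in the proof of Corollary \ref{cor-Z-AP-with-exact-sequences}, and I would write it up in two or three lines citing Theorem \ref{thm-slice-mapping} and Proposition \ref{prop-Fubini-as-kernel}.
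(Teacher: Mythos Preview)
Your core approach matches the paper's exactly: both identify the kernel of $Q\otimes Id_V$ on $\mathcal{S}_p[V]$ as $\mathscr{F}_p(W,V)$ via Proposition~\ref{prop-Fubini-as-kernel}, observe that $(W\widehat{\otimes}_{\min}V)\cap\mathcal{S}_p[V]\subseteq\mathscr{F}_p(W,V)$ always holds, and conclude that exactness at the middle term is equivalent to $\mathscr{F}_p(W,V)\subseteq W\widehat{\otimes}_{\min}V$, which by the slice-mapping characterization is the $p$-OAP. The paper's proof stops there and does not discuss surjectivity at the right-hand term at all.

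Your additional concern about surjectivity of $Q\otimes Id_V:\mathcal{S}_p[V]\to(\mathcal{S}_p/W)\widehat{\otimes}_{\min}V$ is reasonable to raise, but your proposed resolution contains a genuine error. The claim that ``$Q\otimes Id_V$ is a complete $1$-quotient at the level of the minimal tensor product'' is false in general: the minimal tensor product does \emph{not} preserve complete quotients---this failure is precisely what the exactness property for operator spaces measures---and \cite[Prop.~11.3.4]{Effros-Ruan-book} only identifies the kernel, saying nothing about surjectivity. Consequently your ``dense range plus closed range'' argument collapses, since you have no independent reason for the range to be closed. The interpolation alternative is also problematic: the object $(\mathcal{S}_p/W)[V]$ is not defined (the $\mathcal{S}_p[\,\cdot\,]$ construction is specific to $\mathcal{S}_p$, not to an arbitrary quotient of it), and you have swapped the tensor factors midway. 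In short, the part of your write-up that goes beyond the paper's argument is the part that breaks; the paper simply treats ``exactness'' of \eqref{eqn-p-OAP-short-exact-sequence} as exactness at the middle term and leaves the right-hand surjectivity unaddressed.
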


\begin{proof}
By Proposition \ref{prop-Fubini-as-kernel}, the kernel of the mapping $Q \otimes Id_V : \mathcal{S}_p[V] \to (\mathcal{S}_p/W) \widehat\otimes_{\min} V$ is $\mathscr{F}_p(W,V)$, where $Q : \mathcal{S}_p \to \mathcal{S}_p/W$ is the canonical quotient. Therefore, \eqref{eqn-p-OAP-short-exact-sequence} is exact if and only if $\big(W\widehat{\otimes}_{\min} V\big) \cap \mathcal{S}_p[V] = \mathscr{F}_p(W,V)$. Trivially we always have $W\widehat{\otimes}_{\min} V \subseteq \mathscr{F}(W,V)$, so intersecting with $\mathcal{S}_p[V]$ we always have  $\big(W\widehat{\otimes}_{\min} V\big) \cap \mathcal{S}_p[V] \subseteq \mathscr{F}_p(W,V)$.
Therefore, \eqref{eqn-p-OAP-short-exact-sequence} is exact if and only if $\mathscr{F}_p(W,V) \subseteq W\widehat{\otimes}_{\min} V$.
As pointed out above, having this condition for all closed subspaces $W \subseteq \mathcal{S}_p$ precisely characterizes that $V$ has the $p$-OAP.    
\end{proof}

\begin{remark}
We thank the anonymous referee for correcting a previous statement of ours regarding the completeness of the spaces appearing in \eqref{eqn-p-OAP-short-exact-sequence}.
Obviously $S_p[V]$ and $(\mathcal{S}_p/W) \widehat\otimes_{\min} V$ are complete spaces, but what about $\big(W\widehat{\otimes}_{\min} V\big) \cap \mathcal{S}_p[V]$?
First note that by Lemma \ref{lemma-Theta-well-defined} and the metric mapping property of the minimal tensor product, the map $Q \otimes Id_V : \mathcal{S}_p[V] \to (\mathcal{S}_p/W) \widehat\otimes_{\min} V$ is continuous and therefore its kernel (namely $\mathscr{F}_p(W,V)$) is closed in $\mathcal{S}_p[V]$.
Now, the space $\big(W\widehat{\otimes}_{\min} V\big) \cap \mathcal{S}_p[V]$ is the inverse image of the closed subspace $W\widehat{\otimes}_{\min} V \subseteq \mathcal{S}_p\widehat\otimes_{\min}V$ under the continuous injection $\mathcal{S}_p[V] \to \mathcal{S}_p\widehat\otimes_{\min}V$  of Lemma \ref{lemma-Theta-well-defined}, which implies that $\big(W\widehat{\otimes}_{\min} V\big) \cap \mathcal{S}_p[V]$ is closed in $ \mathcal{S}_p[V]$ (and thus complete).
\end{remark}

While the previous corollary may not be too satisfactory, we will now use this approach to prove a generalization of Corollary \ref{cor-2-OAP} which we believe greatly clarifies why it is that every operator space has the 2-OAP. We start with a lemma that says that the condition in Corollary \ref{cor-Z-AP-with-exact-sequences} is automatically satisfied for completely complemented subspaces.

\begin{lemma}\label{lemma-complemented-implies-exact}
Let $W \subseteq Z$ be a completely complemented subspace of the operator space $Z$. Then the sequence \eqref{eqn-Z-AP-short-exact-sequence} is exact.
\end{lemma}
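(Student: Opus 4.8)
The plan is to exploit the functoriality of the minimal tensor product under the completely bounded maps supplied by the hypothesis. Since $W \subseteq Z$ is completely complemented, there is a completely bounded projection $P : Z \to Z$ with range $W$; write $\iota : W \hookrightarrow Z$ for the inclusion and $P_0 : Z \to W$ for the corestriction of $P$, so that $P = \iota \circ P_0$ and $P_0 \circ \iota = \mathrm{id}_W$. Also let $Q : Z \to Z/W$ be the quotient map. Tensoring each of these maps with $\mathrm{id}_V$ and using that $\min$ is a functor (the metric mapping property, \cite[Prop. 8.1.5]{Effros-Ruan-book}), I get completely bounded maps $\iota \otimes \mathrm{id}_V$, $P_0 \otimes \mathrm{id}_V$ and $Q \otimes \mathrm{id}_V$ at the level of the completed minimal tensor products.

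The three things to check are exactness at each spot of \eqref{eqn-Z-AP-short-exact-sequence}. First, $\iota \otimes \mathrm{id}_V : W \widehat{\otimes}_{\min} V \to Z \widehat{\otimes}_{\min} V$ is injective: this is the standard fact that a subspace injects completely isometrically into the whole space under $\min$ (it follows from the construction of $\min$ via embeddings into $\mathcal{B}(H)$, or from \cite[Prop. 8.1.5]{Effros-Ruan-book}), and in any case $(P_0 \otimes \mathrm{id}_V) \circ (\iota \otimes \mathrm{id}_V) = \mathrm{id}_{W \widehat{\otimes}_{\min} V}$ gives a left inverse, so injectivity is immediate. Second, $Q \otimes \mathrm{id}_V$ is onto $(Z/W) \widehat{\otimes}_{\min} V$: its image contains the algebraic tensor product $(Z/W) \otimes V$ (because $Q$ is onto), and that is dense; since $Q \otimes \mathrm{id}_V$ is a complete $1$-quotient map when $Q$ is (here I would invoke that $Q$, being a complete $1$-quotient with completely complemented kernel, stays a complete $1$-quotient after tensoring — alternatively just use density plus the open mapping theorem), the map is surjective. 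Third, exactness in the middle: clearly $(Q \otimes \mathrm{id}_V)(\iota \otimes \mathrm{id}_V) = (Q\iota) \otimes \mathrm{id}_V = 0$, so the image of $\iota \otimes \mathrm{id}_V$ is contained in the kernel. For the reverse inclusion, take $u \in Z \widehat{\otimes}_{\min} V$ with $(Q \otimes \mathrm{id}_V)(u) = 0$; I want $u \in \mathrm{ran}(\iota \otimes \mathrm{id}_V)$. The element $(P \otimes \mathrm{id}_V)(u) = (\iota \otimes \mathrm{id}_V)\big((P_0 \otimes \mathrm{id}_V)(u)\big)$ lies in the range of $\iota \otimes \mathrm{id}_V$, so it suffices to show $(P \otimes \mathrm{id}_V)(u) = u$, i.e. $((\mathrm{id}_Z - P) \otimes \mathrm{id}_V)(u) = 0$. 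The map $\mathrm{id}_Z - P$ factors as $\mathrm{id}_Z - P = j \circ Q$ for some completely bounded $j : Z/W \to Z$ (because $\mathrm{id}_Z - P$ kills $W$, hence factors through $Z/W = Z/\ker(\mathrm{id}_Z-P)$ up to the induced completely bounded map — here one uses that the induced map is completely bounded, which is automatic from the universal property of the quotient operator space). Then $((\mathrm{id}_Z - P) \otimes \mathrm{id}_V)(u) = (j \otimes \mathrm{id}_V)\big((Q \otimes \mathrm{id}_V)(u)\big) = (j \otimes \mathrm{id}_V)(0) = 0$, which is exactly what I wanted.

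The only delicate point — and the one I would be most careful about — is the surjectivity of $Q \otimes \mathrm{id}_V$ onto the \emph{completed} minimal tensor product; in general tensoring a quotient map with $\mathrm{id}_V$ under $\min$ need not be surjective without some flatness/exactness hypothesis, but here it is guaranteed precisely because the kernel $W$ is completely complemented, so that $Z \cong W \oplus (Z/W)$ completely isomorphically and $Z \widehat{\otimes}_{\min} V \cong (W \widehat{\otimes}_{\min} V) \oplus ((Z/W) \widehat{\otimes}_{\min} V)$, from which both surjectivity and exactness in the middle drop out at once. In fact the cleanest writeup is to observe first that the completely bounded isomorphism $(\iota, j) : W \oplus (Z/W) \to Z$ (built from the complementation) tensors to a completely bounded isomorphism of the three-fold direct sums, and then simply transport the trivial exact sequence $0 \to W\widehat\otimes_{\min}V \to (W \oplus Z/W)\widehat\otimes_{\min}V \to (Z/W)\widehat\otimes_{\min}V \to 0$ across this isomorphism; everything else is bookkeeping.
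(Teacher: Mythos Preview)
Your proof is correct and follows essentially the same route as the paper: both use the completely bounded projection $P$ and its complement to split $Z\widehat\otimes_{\min}V$, from which exactness is immediate (the paper phrases this via the ranges of the complementary projections $P\otimes Id_V$ and $(Id_Z-P)\otimes Id_V$ on $Z\widehat\otimes_{\min}V$, after replacing $Z/W$ by the isomorphic complement $W_0=(Id_Z-P)Z$, while you verify the three exactness conditions separately and then also sketch the direct-sum argument in your last paragraph). The only wobble is your first pass at surjectivity of $Q\otimes Id_V$---``density plus open mapping'' alone is not enough, and quotients do not in general survive $\widehat\otimes_{\min}$---but you correctly patch this via the section $j:Z/W\to Z$ arising from the complementation, which gives an explicit right inverse $j\otimes Id_V$.
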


\begin{proof}
Let $P : Z \to Z$ be a completely bounded projection onto $W$, and let $W_0 = (Id_Z - P)Z \subset Z$ be the corresponding complement. Note that in this case the sequence   \eqref{eqn-Z-AP-short-exact-sequence} is exact if and only if so is
\[
0 \to W\widehat{\otimes}_{\min} V \to Z\widehat{\otimes}_{\min} V \to W_0 \widehat\otimes_{\min} V \to 0.
\]
Now, from the mapping property of the minimal tensor product,
$P \otimes Id_V$ and $(Id_Z-P)\otimes Id_V$ are complementary projections in $\CB(Z\widehat{\otimes}_{\min} V,Z\widehat{\otimes}_{\min} V)$.
Moreover, their ranges are precisely $ W\widehat{\otimes}_{\min} V$ and $W_0\widehat{\otimes}_{\min} V$, respectively. Indeed, $Z \otimes V$ is dense in $Z\widehat{\otimes}_{\min} V$, so $P(Z) \otimes V = W \otimes V$ is dense in the range of $P \otimes Id_V$, which implies that the range of $P \otimes Id_V$ is precisely $ W\widehat{\otimes}_{\min} V$ (and analogously for the complementary projection).
Since the kernel of a projection always coincides with the range of the complementary projection, we are done.
\end{proof}

Recall that an operator space $H$ is called \emph{Hilbertian} if as a Banach space it is a Hilbert space, and \emph{homogeneous} if every bounded linear map $T : H \to H$ is completely bounded. See \cite[Sec. 9.2]{Pisier-OS-theory} for properties and examples of  homogeneous Hilbertian operator spaces.
In particular, $\mathcal{S}_2$ is homogeneous since it is completely isometrically isomorphic to $\OH$ (as pointed out in the proof of Corollary \ref{cor-2-OAP} above).

\begin{theorem}\label{th-homogeneous-hilbertian}
If $H$ is a homogeneous Hilbertian operator space, then every operator space has the $H$-AP.
In particular, every operator space has the $\mathcal{S}_2$-AP and therefore the $2$-OAP.
\end{theorem}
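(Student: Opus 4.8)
The plan is to deduce this directly from Corollary \ref{cor-Z-AP-with-exact-sequences} together with Lemma \ref{lemma-complemented-implies-exact}, reducing everything to a single structural fact about homogeneous Hilbertian operator spaces: all of their closed subspaces are completely complemented. First I would recall that, by Corollary \ref{cor-Z-AP-with-exact-sequences}, an operator space $V$ has the $H$-AP precisely when, for every closed subspace $W \subseteq H$, the short exact sequence $0 \to W \to H \to H/W \to 0$ induces a short exact sequence after tensoring with $V$ over $\widehat{\otimes}_{\min}$. Thus it suffices to verify this exactness for every closed $W \subseteq H$, and this is exactly the content of Lemma \ref{lemma-complemented-implies-exact} whenever $W$ is completely complemented in $H$.

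The key step, then, is to show that every closed subspace $W$ of a homogeneous Hilbertian operator space $H$ is completely complemented. Here I would use the Hilbert space structure: since $H$ is a Hilbert space as a Banach space, there is an orthogonal projection $P : H \to H$ with range $W$, and $P$ is bounded (with $\n{P} \le 1$). Because $H$ is homogeneous, \emph{every} bounded linear map $H \to H$ is automatically completely bounded, so $P \in \CB(H,H)$. Hence $P$ is a completely bounded projection of $H$ onto $W$, i.e. $W$ is completely complemented. Feeding this into Lemma \ref{lemma-complemented-implies-exact} gives the desired exactness for all closed $W \subseteq H$, and therefore $V$ has the $H$-AP. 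Since $V$ was arbitrary, every operator space has the $H$-AP.

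For the ``in particular'' clause I would recall, as already noted in the proof of Corollary \ref{cor-2-OAP}, that $\mathcal{S}_2$ is completely isometrically isomorphic to $\OH$ and hence is a homogeneous Hilbertian operator space; applying the first part yields that every operator space has the $\mathcal{S}_2$-AP. Finally, since Yew's $\mathcal{S}_p$-AP is stronger than the $p$-OAP (as observed just after the definition of the $p$-OAP, because one restricts attention to $v \in \mathcal{S}_p[V] \subseteq \mathcal{S}_p \widehat\otimes_{\min} V$), the $\mathcal{S}_2$-AP implies the $2$-OAP, completing the proof.

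I do not anticipate a serious obstacle: the only point requiring care is the — entirely routine — observation that the Hilbert-space orthogonal projection is a genuine completely bounded \emph{projection} onto $W$ (so that Lemma \ref{lemma-complemented-implies-exact} applies verbatim), and the recognition that homogeneity is precisely the hypothesis that upgrades this bounded projection to a completely bounded one. The conceptual payoff, relative to the more computational route through Theorem \ref{thm-characterizations-p-OAP} taken in Corollary \ref{cor-2-OAP}, is that it isolates ``all closed subspaces are completely complemented'' as the true reason behind the $2$-OAP for every operator space.
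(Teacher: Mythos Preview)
Your proposal is correct and follows essentially the same route as the paper: the paper's proof is simply the terse statement that any closed subspace of $H$ is complemented because $H$ is Hilbertian and thus completely complemented by homogeneity, after which Corollary \ref{cor-Z-AP-with-exact-sequences} and Lemma \ref{lemma-complemented-implies-exact} finish the argument. Your additional justification of the ``in particular'' clause via $\mathcal{S}_2 \cong \OH$ and the implication $\mathcal{S}_2$-AP $\Rightarrow$ $2$-OAP is correct and consistent with the remarks made earlier in the paper.
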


\begin{proof}
Any closed subspace $Z \subseteq H$ is complemented because $H$ is Hilbertian, and thus completely complemented by homogeneity. The desired conclusion now follows from Corollary \ref{cor-Z-AP-with-exact-sequences} and Lemma \ref{lemma-complemented-implies-exact}.    
\end{proof}

\section{Transference of the $p$-OAP from $V'$ to $V$} \label{sec transference}


The classical approximation property is well-known to transfer from the dual to the space, and the same is true for the $p$-AP
\cite[Thm. 2.7]{Choi-Kim}. In the context of operator spaces a similar transference result holds for the OAP \cite[Cor. 11.2.6]{Effros-Ruan-book}. 
In this section, we explore this type of relation for the $p$-OAP and demonstrate that the statement is valid with an additional hypothesis. To present this, we introduce the following definition, inspired by an analogous connection in the classical case which characterizes the Radon-Nikodým property for dual spaces, see \cite[Prop. 16.5]{Defant-Floret}.




\begin{definition}\label{defn-p-ORNP}
Let $1<p \le \infty$.
An operator space $V$ is said to have the \emph{\ORNP{p}} if the natural map $\mathcal{S}_{p'} \widehat\otimes_{\proj} V' \to (\mathcal{S}_p \widehat\otimes_{\min}V)'$ is surjective.
\end{definition}

Let us recall how the aforementioned natural map is defined, since this will be useful for us later in this section.
For any operator space $W$, by the universal property of the projective tensor product, the duality pairing $W \times W' \to \C$ corresponds to a complete contraction $W \widehat\otimes_{\proj} W' \to \C$ which is usually referred to as the \emph{tensor contraction}.
We will be particularly interested in the duality pairing between $\mathcal{S}_{p}$ and $\mathcal{S}_{p'}$, which is given by $\pair{a}{b} = \tr(ab^t)=\sum_{ij} a_{ij}b_{ij}$ where $a = (a_{ij}) \in \mathcal{S}_{p}$ and $b = (b_{ij}) \in \mathcal{S}_{p'}$.
For any operator spaces $V$ and $W$, it is known that this duality pairing induces a natural complete contraction (also called a tensor contraction)
\begin{equation}\label{eqn-Sp-tensor-contraction-general}
    (\mathcal{S}_p \widehat\otimes_{\min}V) \widehat\otimes_{\proj} (\mathcal{S}_{p'} \widehat\otimes_{\proj} W) \to V \widehat\otimes_{\proj} W
\end{equation}
which is given on elementary tensors by $a \otimes v \otimes b \otimes w \mapsto \pair{a}{b}v \otimes w$ \cite[Lemma 3.2]{CD-Chevet-Saphar}.
Note that in the special case $W=V'$, the mapping \eqref{eqn-Sp-tensor-contraction-general} can in turn then be composed with the tensor contraction $V \widehat\otimes_{\proj} V' \to \C$,
yielding a completely bounded linear map $ (\mathcal{S}_p \widehat\otimes_{\min}V) \widehat\otimes_{\proj} (\mathcal{S}_{p'} \widehat\otimes_{\proj} V') \to \C$.
By the universal property of the projective tensor product again \cite[Prop. 7.1.2]{Effros-Ruan-book}, this corresponds to the jointly completely bounded bilinear map $(\mathcal{S}_p \widehat\otimes_{\min}V) \times (\mathcal{S}_{p'} \widehat\otimes_{\proj} V') \to \C$ which is given on elementary tensors by $(a\otimes v, b\otimes v') \mapsto \pair{a}{b} \pair{v}{v'}$.
The natural map $\mathcal{S}_{p'} \widehat\otimes_{\proj} V' \to (\mathcal{S}_p \widehat\otimes_{\min}V)'$ appearing in Definition \ref{defn-p-ORNP} is precisely the one corresponding to said bilinear map.

We now exhibit examples of operator spaces with the \ORNP{p}. 
Recall that for a given Banach space $X$, $\MIN(X)$ (resp. $\MAX(X)$) is the unique operator structure on $X$ compatible with the norm of $X$ and such that for any operator space $V$ and any linear map $T : V \to X$ (resp. $S : X \to V$) we have
 $\|T : V \to \MIN(X)\|_{\cb} = \| T : V \to X \|$
 (resp. $\|S : \MAX(X) \to V\|_{\cb} = \| S : X \to V \|$).
 The reader is referred to \cite[Chap. 3]{Pisier-OS-theory} for more details.

\begin{proposition}\label{proposition-examples-p-ORNP}
\begin{enumerate}[(a)]
\item Let $1<p \leq \infty$ and let $X$ be a Banach space. Then $\MIN(X)$ has the \ORNP{p}. 
\item Every operator space has the \ORNP{\infty}.
\end{enumerate}
\end{proposition}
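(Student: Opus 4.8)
The plan is to treat part (b) first, since it is used in part (a).

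\textbf{Part (b).} Here $p=\infty$, so $p'=1$, and the two spaces appearing in Definition \ref{defn-p-ORNP} are identified by construction, namely $\mathcal{S}_1\widehat\otimes_{\proj}V'=\mathcal{S}_1[V']$ and $\mathcal{S}_\infty\widehat\otimes_{\min}V=\mathcal{S}_\infty[V]$. The duality $\mathcal{S}_p[V]'=\mathcal{S}_{p'}[V']$, in the case $p=\infty$, identifies $(\mathcal{S}_\infty[V])'$ with $\mathcal{S}_1[V']$, and a check on elementary tensors (both the natural map of Definition \ref{defn-p-ORNP} and this duality send $(b\otimes v',\,a\otimes v)$ to $\tr(ab^{t})\pair{v}{v'}$) shows that the natural map is exactly this identification. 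Hence it is a completely isometric isomorphism, in particular surjective.

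\textbf{Part (a).} The case $p=\infty$ follows from (b), so I would assume $1<p<\infty$. The first step is to pin down, at the Banach-space level, the source and target of the natural map. Embedding $\MIN(X)$ completely isometrically into $C(K)$ with $K=(B_{X'},w^{*})$ and using $\mathcal{S}_p\widehat\otimes_{\min}C(K)=C(K;\mathcal{S}_p)$ together with the injectivity of $\otimes_{\min}$, the norm induced on $\mathcal{S}_p\otimes X$ is $u\mapsto\sup_{\psi\in B_{X'}}\n{(\id_{\mathcal{S}_p}\otimes\psi)(u)}_{\mathcal{S}_p}$, which is the injective Banach tensor norm; so the underlying Banach space of $\mathcal{S}_p\widehat\otimes_{\min}\MIN(X)$ is $\mathcal{S}_p\check\otimes X$, and therefore its dual is the space $\mathcal{I}(\mathcal{S}_p,X')$ of integral operators. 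For the source, from $(E\widehat\otimes_{\proj}F)'=\CB(E,F')$ and $\n{\cdot}_{\cb}\ge\n{\cdot}$ one gets $\n{\cdot}_{\widehat\otimes_{\proj}}\le\n{\cdot}_{\pi}$ on the algebraic tensor product, so the natural map $\mathcal{S}_{p'}\widehat\otimes_{\proj}\MAX(X')\to\mathcal{I}(\mathcal{S}_p,X')$ factors through the Banach projective tensor product $\mathcal{S}_{p'}\widehat\otimes_{\pi}X'$; hence its image contains every nuclear operator $\mathcal{N}(\mathcal{S}_p,X')$.

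The remaining and decisive step is to show $\mathcal{I}(\mathcal{S}_p,X')=\mathcal{N}(\mathcal{S}_p,X')$ for \emph{every} Banach space $X$ (this is precisely where the operator-space setting improves on the classical one, in which the Radon--Nikod\'ym property of $X'$ is genuinely needed). I would dualize: an integral operator $T:\mathcal{S}_p\to X'$ has integral adjoint $T':X''\to\mathcal{S}_{p'}$, and since $\mathcal{S}_{p'}$ is reflexive --- hence has the Radon--Nikod\'ym property --- and has the approximation property, every integral operator into $\mathcal{S}_{p'}$ is nuclear, so $T'\in\mathcal{N}(X'',\mathcal{S}_{p'})$. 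Then its adjoint $(T')'=\iota_{X'}\circ T:\mathcal{S}_p\to X'''$ is nuclear, and composing with the canonical norm-one projection $P:X'''\to X'$ (so that $P\circ\iota_{X'}=\id_{X'}$) yields $T=P\circ(T')'\in\mathcal{N}(\mathcal{S}_p,X')$. Combining the three steps, the natural map is onto, which is the \ORNP{p} for $\MIN(X)$.

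I expect the main obstacle to be the careful bookkeeping around the underlying Banach spaces of the operator-space tensor products --- establishing $\mathcal{S}_p\widehat\otimes_{\min}\MIN(X)=\mathcal{S}_p\check\otimes X$ at the Banach level and $(\mathcal{S}_p\check\otimes X)'=\mathcal{I}(\mathcal{S}_p,X')$ --- and making sure the classical ``integral equals nuclear'' theorem is invoked under valid hypotheses; the biduality-plus-complementation reduction to the reflexive, approximable space $\mathcal{S}_{p'}$ is the conceptual heart of the argument.
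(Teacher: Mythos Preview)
Your proof is correct. Part (b) is exactly the paper's argument. For part (a) the paper proceeds more briskly: it invokes the Banach-space identities $\mathcal{S}_{p'}\widehat\otimes_{\proj}\MAX(X')=\mathcal{S}_{p'}\widehat\otimes_{\pi}X'$ and $\mathcal{S}_p\widehat\otimes_{\min}\MIN(X)=\mathcal{S}_p\widehat\otimes_{\varepsilon}X$ (both from \cite{Blecher-LeMerdy}) and then applies the classical Radon--Nikod\'ym characterization for dual spaces \cite[Prop.~16.5]{Defant-Floret}, using that $\mathcal{S}_{p'}$ is a separable dual. You instead unpack that classical result by hand: identifying the target as $\mathcal{I}(\mathcal{S}_p,X')$, showing the image contains $\mathcal{N}(\mathcal{S}_p,X')$, and then proving $\mathcal{I}=\mathcal{N}$ via the dualization through the reflexive space $\mathcal{S}_{p'}$ and the projection $X'''\to X'$. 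The paper's route is shorter; yours is more self-contained.

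Two small points. First, your ``factors through'' is phrased the wrong way round: the inequality $\n{\cdot}_{\widehat\otimes_{\proj}}\le\n{\cdot}_{\pi}$ gives a contraction $\mathcal{S}_{p'}\widehat\otimes_{\pi}X'\to\mathcal{S}_{p'}\widehat\otimes_{\proj}\MAX(X')$, not a map in the opposite direction; nevertheless your conclusion that the image of the natural map contains the nuclear operators still follows, since the classical map $\mathcal{S}_{p'}\widehat\otimes_{\pi}X'\to\mathcal{I}(\mathcal{S}_p,X')$ factors through the natural one. Second, your parenthetical that ``the operator-space setting improves on the classical one, in which the Radon--Nikod\'ym property of $X'$ is genuinely needed'' is off: already in the Banach setting it is the RNP of $\mathcal{S}_{p'}$ (always satisfied), not of $X'$, that decides surjectivity of $\mathcal{S}_{p'}\widehat\otimes_{\pi}X'\to(\mathcal{S}_p\widehat\otimes_{\varepsilon}X)'$, so no improvement is taking place.
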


\begin{proof}
$(a)$ Note that $\MIN(X)'=\MAX(X')$ \cite[Sec. 1.4.12]{Blecher-LeMerdy}, and
$\mathcal{S}_{p'} \widehat\otimes_{\proj} \MAX(X') = \mathcal{S}_{p'} \widehat\otimes_{\pi} X'$ 
and $\mathcal{S}_p \widehat\otimes_{\min} \MIN(X) = \mathcal{S}_p \widehat\otimes_{\varepsilon} X$ as Banach spaces \cite[Prop. 1.5.3 and Prop. 1.5.12]{Blecher-LeMerdy}. Therefore the result follows from the classical characterization of the Radon-Nikod\'ym property for dual spaces \cite[Prop. 16.5]{Defant-Floret} (since $\mathcal{S}_{p'}$ is a separable dual space, and thus it has the Radon-Nikod\'ym property).

$(b)$ Is immediate because in this case we even have a complete isometry $\mathcal{S}_1[V']= \mathcal{S}_1\widehat\otimes_{\proj} V' \to (\mathcal{S}_\infty \widehat\otimes_{\min}V)' = \mathcal{S}_\infty[V]'$ \cite[Cor. 1.8]{Pisier-Asterisque}.


\end{proof}

The \ORNP{p} is a convenient technical tool, but at the moment it is still very mysterious to the authors. Besides Proposition \ref{proposition-examples-p-ORNP}, we do not know further examples of spaces that do or do not have it.

We now describe the functionals on $\CB(V,W)$ which are $\tau_p$-continuous. This is a noncommutative version of \cite[Thm. 2.5]{Choi-Kim}, although the expressions for the functionals in that result are more explicit than ours. We have chosen this  abstract presentation to emphasize the role of the \ORNP{p}, and postpone the precise formulas until Proposition \ref{prop-double-tensor-contraction}.

\begin{proposition}\label{prop-dual-of-tau_p}
Let $V,W$ be operator spaces and $1<p\le \infty$.
\begin{enumerate}[(a)]
\item For any $v=(v_{ij}) \in \mathcal{S}_p[V]$,
and $\omega \in \mathcal{S}_{p'} \widehat\otimes_{\proj} W'$
the expression
$
\varphi(T) = \pair{\omega}{ (Tv_{ij}) }
$
defines a functional $\varphi\in(\CB(V,W),\tau_p)'$.
\item If $W$ has the \ORNP{p}, then all of the functionals in $(\CB(V,W),\tau_p)'$ can be represented as in part (a). 
\end{enumerate}
\end{proposition}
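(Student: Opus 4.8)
The plan is to establish both parts by unwinding the chain of natural identifications built in the discussion preceding the statement, and then using the \ORNP{p} hypothesis only for part (b). For part (a), the key observation is that the seminorm $T \mapsto \|(Id_{\mathcal{S}_p}\otimes T)(v)\|_{\mathcal{S}_p\widehat\otimes_{\min}W}$ is, by definition of $\tau_p$, continuous for each $v\in\mathcal{S}_p[V]$. So it suffices to show that the functional $\varphi(T) = \pair{\omega}{(Tv_{ij})}$ factors through the map $T \mapsto (Id_{\mathcal{S}_p}\otimes T)(v) \in \mathcal{S}_p\widehat\otimes_{\min}W$, followed by a bounded functional on $\mathcal{S}_p\widehat\otimes_{\min}W$. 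First I would fix $v\in\mathcal{S}_p[V]$ and $\omega\in\mathcal{S}_{p'}\widehat\otimes_{\proj}W'$, and observe that the element $\omega$ induces, via the tensor contraction $(\mathcal{S}_p\widehat\otimes_{\min}W)\widehat\otimes_{\proj}(\mathcal{S}_{p'}\widehat\otimes_{\proj}W')\to W\widehat\otimes_{\proj}W'\to\C$ from \eqref{eqn-Sp-tensor-contraction-general} composed with the tensor contraction of $W$, a bounded linear functional $\ell_\omega$ on $\mathcal{S}_p\widehat\otimes_{\min}W$; concretely, on an elementary tensor $a\otimes w$ and $\omega=b\otimes w'$ this reads $\ell_\omega(a\otimes w)=\pair{a}{b}\pair{w}{w'}$. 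Then I would check that $\ell_\omega\big((Id_{\mathcal{S}_p}\otimes T)(v)\big) = \pair{\omega}{(Tv_{ij})} = \varphi(T)$: this is a matter of verifying the identity on $v$ of the form $a\otimes v_0$ (where it is $\pair{a}{b}\pair{Tv_0}{w'}$ on both sides) and extending by linearity and density, using that $Id_{\mathcal{S}_p}\otimes T$ and both contractions are continuous. Since $\ell_\omega$ is bounded and $v\mapsto(Id_{\mathcal{S}_p}\otimes T)(v)$ is precisely the map whose composition with the norm defines a defining seminorm of $\tau_p$, $\varphi$ is $\tau_p$-continuous.

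For part (b), I would argue as follows. Let $\varphi\in(\CB(V,W),\tau_p)'$. By definition of the topology $\tau_p$ as the weakest making the seminorms $p_v(T)=\|(Id_{\mathcal{S}_p}\otimes T)(v)\|_{\mathcal{S}_p\widehat\otimes_{\min}W}$ continuous (with $v$ ranging over $\mathcal{S}_p[V]$), standard locally convex duality (a basic neighbourhood of $0$ is of the form $\{T: p_{v_k}(T)<\varepsilon,\ k=1,\dots,n\}$, and such a finite family can be absorbed into a single $v=v_1\oplus\cdots\oplus v_n\in\mathcal{S}_p[V]$ as noted after \eqref{eqn-approx-of-identity}) gives a single $v\in\mathcal{S}_p[V]$ and a constant $C$ with $|\varphi(T)|\le C\,p_v(T)$ for all $T$. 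Consider the linear map $\Phi:\CB(V,W)\to\mathcal{S}_p\widehat\otimes_{\min}W$, $\Phi(T)=(Id_{\mathcal{S}_p}\otimes T)(v)$. The inequality says $\varphi$ vanishes on $\ker\Phi$ and is bounded on the range of $\Phi$, so it factors as $\varphi=\psi\circ\Phi$ for a bounded functional $\psi$ defined on $\overline{\Phi(\CB(V,W))}\subseteq\mathcal{S}_p\widehat\otimes_{\min}W$, which by Hahn--Banach extends to $\psi\in(\mathcal{S}_p\widehat\otimes_{\min}W)'$. Now the \ORNP{p} hypothesis enters: by Definition \ref{defn-p-ORNP} the natural map $\mathcal{S}_{p'}\widehat\otimes_{\proj}W'\to(\mathcal{S}_p\widehat\otimes_{\min}W)'$ is surjective, so there is $\omega\in\mathcal{S}_{p'}\widehat\otimes_{\proj}W'$ with $\psi=\ell_\omega$ in the notation above. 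Tracing back, $\varphi(T)=\ell_\omega\big((Id_{\mathcal{S}_p}\otimes T)(v)\big)=\pair{\omega}{(Tv_{ij})}$, which is exactly the representation in part (a).

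The main obstacle I anticipate is the bookkeeping in part (a): carefully identifying the functional $\ell_\omega$ on $\mathcal{S}_p\widehat\otimes_{\min}W$ associated to $\omega\in\mathcal{S}_{p'}\widehat\otimes_{\proj}W'$ through the chain of tensor contractions, and verifying the key identity $\ell_\omega\big((Id_{\mathcal{S}_p}\otimes T)(v)\big)=\pair{\omega}{(Tv_{ij})}$ rigorously — in particular handling the fact that $v$ is a priori only in $\mathcal{S}_p[V]$ (an interpolation space), so one passes through its image in $\mathcal{S}_p\widehat\otimes_{\min}V$ via Lemma \ref{lemma-Theta-well-defined}, checks the identity on elementary tensors, and then extends by continuity/density using that all the maps involved ($Id_{\mathcal{S}_p}\otimes T$, the tensor contractions) are continuous on the relevant completed tensor products. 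The part (b) argument, by contrast, is a soft functional-analysis factorization once part (a) pins down the shape of the functionals, with the only real input being the surjectivity hypothesis \ORNP{p}.
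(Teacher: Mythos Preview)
Your proposal is correct and follows essentially the same approach as the paper's proof: for (a) you bound $|\varphi(T)|$ by $\|\omega\|$ times the defining $\tau_p$-seminorm associated to $v$, and for (b) you reduce to a single seminorm, factor through the range of $T\mapsto (Id_{\mathcal{S}_p}\otimes T)(v)$, extend by Hahn--Banach, and invoke the \ORNP{p}. The only cosmetic difference is that the paper phrases the continuity bound in (a) via the equivalent seminorm $\|T\|_{\mathbf{K}}$ with $\mathbf{K}=\Theta^v(\mathbf{B}_{\mathcal{S}_{p'}})$ (using \eqref{eqn-equality-norm-K-norm-Sp-min}), whereas you work directly with the original $\tau_p$-seminorms; the arguments are otherwise the same.
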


\begin{proof}
$(a)$ First, let us note that for any $v=(v_{ij}) \in \mathcal{S}_p[V]$ and $T \in \CB(V,W)$ we have $(Tv_{ij}) \in \mathcal{S}_p[W]$ \cite[Cor. 1.2]{Pisier-Asterisque}, so the expression for $\varphi$ above makes sense because $\mathcal{S}_p[W]$ is contained in $\mathcal{S}_p \widehat\otimes_{\min}W$ (Lemma \ref{lemma-Theta-well-defined}) and because of the natural contractive map $\mathcal{S}_{p'} \widehat\otimes_{\proj} W' \to (\mathcal{S}_p \widehat\otimes_{\min}W)'$.

 Thus, if $\varphi : \CB(V,W) \to \C$ is given by $\varphi(T) = \pair{\omega}{ (Tv_{ij}) }$,
 then (see \eqref{eqn-equality-norm-K-norm-Sp-min})
 \[
 |\varphi(T)| \le \n{\omega} \n{(Tv_{ij})}_{\mathcal{S}_p\widehat\otimes_{\min}W} = \n{\omega} \n{T}_{\mathbf{K}}
 \] 
 where $\mathbf{K} = \Theta^v(\mathbf B_{\mathcal{S}_p'})$, showing that $\varphi : (\CB(V,W),\tau_p) \to \C$ is continuous because of Proposition \ref{prop-uniform-convergence-over-p-compact}.

$(b)$ Now suppose $\varphi : (\CB(V,W),\tau_p) \to \C$ is continuous.
Then, there exist a constant $C$ and $v = (v_{ij}) \in \mathcal{S}_p[V]$ such that for every $T\in\CB(V,W)$ we have $|\varphi(T)| \le C \n{T}_{\mathbf{K}}$ where $\mathbf{K} = \Theta^v(\mathbf B_{\mathcal{S}_p'})$, that is, $|\varphi(T)| \le C \n{(Tv_{ij})}_{\mathcal{S}_p\widehat\otimes_{\min} W}$.
Therefore, there is a linear functional $\omega$ defined on the closure of the image of the mapping $T \in \CB(V,W) \mapsto (Tv_{ij}) \in \mathcal{S}_p\widehat\otimes_{\min} W$ satisfying $\varphi(T) = \omega((Tv_{ij}))$, for every $T \in \CB(V,W)$. By Hahn-Banach we can extend $\omega$ to a functional in  $(\mathcal{S}_p\widehat\otimes_{\min} W)'$ that we still denote by $\omega$, which is given by some element of $\mathcal{S}_{p'} \widehat\otimes_{\proj} W'$ because $W$ has the \ORNP{p}.
\end{proof}

For the proof of the main result of this section, it will be useful to understand the previous proposition in a more conceptual way. Recall from \eqref{eqn-Sp-tensor-contraction-general} that we have a jointly completely contractive bilinear map (induced by the duality pairing/tensor contraction)
\begin{equation}\label{eqn-Sp-tensor-contraction-specialized}
(\mathcal{S}_p \widehat\otimes_{\min}V) \times (\mathcal{S}_{p'} \widehat\otimes_{\proj} W') \to V \widehat\otimes_{\proj} W' \subseteq \CB(V,W)',    
\end{equation}
where the last inclusion is given by restricting the duality given by $(V \widehat\otimes_{\proj} W')' =\CB(V,W'')$ \cite[Thm. 4.1]{Pisier-OS-theory} to $\CB(V,W) \subset \CB(V,W'')$.
What Proposition \ref{prop-dual-of-tau_p} says is that when in \eqref{eqn-Sp-tensor-contraction-specialized} we restrict to $S_p[V] \subset \mathcal{S}_p \widehat\otimes_{\min}V$ (Lemma \ref{lemma-Theta-well-defined}), we get a bilinear map
\[
\mathcal{S}_p[V] \times (\mathcal{S}_{p'} \widehat\otimes_{\proj} W') \to (\CB(V,W),\tau_p)'
\]
which is induced by the duality pairing/tensor contraction. Moreover, when $W$ has the \ORNP{p} then this bilinear map is a surjection.
We build upon this in the next result.
Note that part $(a)$ is an infinite version of \cite[Prop. 1.15]{Pisier-Asterisque}.


\begin{proposition}\label{prop-double-tensor-contraction}
Let $1<p \le \infty$, and let $V$, $W$ be operator spaces.
\begin{enumerate}[(a)]
\item The duality pairing $\mathcal{S}_p \times \mathcal{S}_{p'} \to \C$  induces a surjection $\mathscr{C}_p : \mathcal{S}_p[V] \times \mathcal{S}_{p'}[W] \to V \widehat\otimes_{\proj} W$,
which is given by $\mathscr{C}_p\big( (v_{ij}),(w_{ij})\big) = \sum_{i,j} v_{ij} \otimes w_{ij} =  \lim_{N \to \infty} \sum_{i,j=1}^N v_{ij} \otimes w_{ij}$ for $(v_{ij}) \in S_p[V]$ and $(w_{ij}) \in \mathcal{S}_{p'}[W]$.
Moreover, for any $u \in V \widehat\otimes_{\proj} W$ we have
\[
\n{u}_{\proj} = \inf\big\{ \n{v}_{S_p[V]} \n{w}_{\mathcal{S}_{p'}[W]} \;:\; u = \mathscr{C}_p(v,w)\big\}.
\]
\item The double duality pairing induces a trilinear map 
$\mathscr{D}_p : \mathcal{S}_p[V] \times \mathcal{S}_{p'}[\mathcal{S}_{p'}] \times \mathcal{S}_{p}[W'] \to (\CB(V,W),\tau_p)'$ given as follows: for $v=(v_{ij})_{ij} \in \mathcal{S}_p[V]$, $a=\big( (a^{ij}_{kl})_{kl} \big)_{ij} \in \mathcal{S}_{p'}[\mathcal{S}_{p'}]$, $w'=(w'_{kl}) \in \mathcal{S}_{p}[W']$ and $T \in \CB(V,W)$,
\[
\mathscr{D}_p(v,a,w')(T) = \sum_{ijkl} a^{ij}_{kl} \pair{w'_{kl}}{Tv_{ij}} = \lim_{N \to \infty} \sum_{ijkl=1}^N a^{ij}_{kl} \pair{w'_{kl}}{Tv_{ij}}.
\]
If $W$ has the \ORNP{p}, then the map $\mathscr{D}_p$ is surjective.
\end{enumerate}  
\end{proposition}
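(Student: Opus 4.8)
\textbf{Proof proposal for Proposition \ref{prop-double-tensor-contraction}.}

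The plan is to treat the two parts separately, with part $(a)$ feeding into part $(b)$. For part $(a)$, the finite-dimensional version $\mathscr{C}_p : \mathcal{S}_p^n[V] \times \mathcal{S}_{p'}^n[W] \to V \widehat\otimes_{\proj} W$ with the quoted norm formula is exactly \cite[Prop. 1.15]{Pisier-Asterisque} (modulo the identification of the duality pairing $\mathcal{S}_p^n \times \mathcal{S}_{p'}^n \to \C$ with the tensor contraction). First I would verify that the bilinear map is well-defined on the infinite level: given $(v_{ij}) \in \mathcal{S}_p[V]$ and $(w_{ij}) \in \mathcal{S}_{p'}[W]$, the truncations $v^{(N)} = (v_{ij})_{i,j=1}^N$ and $w^{(N)} = (w_{ij})_{i,j=1}^N$ have norms uniformly bounded by $\n{v}_{\mathcal{S}_p[V]}$ and $\n{w}_{\mathcal{S}_{p'}[W]}$ respectively (because the block restrictions $\mathcal{S}_p \to \mathcal{S}_p^n$ are complete contractions, and similarly after tensoring), so by the finite-dimensional norm estimate $\n{\mathscr{C}_p(v^{(N)},w^{(N)})}_{\proj} \le \n{v}_{\mathcal{S}_p[V]}\n{w}_{\mathcal{S}_{p'}[W]}$; a standard Cauchy-sequence argument on the differences $\mathscr{C}_p(v^{(N)},w^{(N)}) - \mathscr{C}_p(v^{(M)},w^{(M)})$ (splitting into the ``$v$ varies'' and ``$w$ varies'' pieces and using that the tails of $v$ and $w$ go to zero in the appropriate $\mathcal{S}_p[\cdot]$ norms) shows the limit $\sum_{ij} v_{ij}\otimes w_{ij}$ exists in $V\widehat\otimes_{\proj}W$ and the bound $\n{\mathscr{C}_p(v,w)}_{\proj} \le \n{v}_{\mathcal{S}_p[V]}\n{w}_{\mathcal{S}_{p'}[W]}$ passes to the limit. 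For the reverse inequality and surjectivity: any $u \in V\widehat\otimes_{\proj}W$ is a norm limit of finite sums $\sum_{k} x_k \otimes y_k$; each such finite sum, and in fact $u$ itself up to $\varepsilon$, lies in the image of $\mathscr{C}_p$ via the finite-dimensional result (realizing the finitely many $x_k$ as entries of an $\mathcal{S}_p^n[V]$-element and the $y_k$ as entries of an $\mathcal{S}_{p'}^n[W]$-element for $n$ large), and then a routine $1+\varepsilon$ iteration / geometric-series gluing argument produces a single pair $(v,w) \in \mathcal{S}_p[V] \times \mathcal{S}_{p'}[W]$ with $\mathscr{C}_p(v,w) = u$ and $\n{v}_{\mathcal{S}_p[V]}\n{w}_{\mathcal{S}_{p'}[W]}$ arbitrarily close to $\n{u}_{\proj}$.

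For part $(b)$, I would first observe that the map $\mathscr{D}_p$ factors through $\mathscr{C}_p$: composing the two layers of duality, note that for fixed $a \in \mathcal{S}_{p'}[\mathcal{S}_{p'}]$ and $w' \in \mathcal{S}_p[W']$, applying $\mathscr{C}_{p'}$ (in the roles $\mathcal{S}_{p'}$, $W'$) produces an element $\omega := \mathscr{C}_{p'}(a, w') \in \mathcal{S}_{p'} \widehat\otimes_{\proj} W'$, and then $\mathscr{D}_p(v,a,w')(T) = \pair{\omega}{(Tv_{ij})}$ is precisely the functional of Proposition \ref{prop-dual-of-tau_p}(a) — here one must check the bookkeeping that pairing $a \in \mathcal{S}_{p'}[\mathcal{S}_{p'}]$ against $(Tv_{ij}) \in \mathcal{S}_p[W]$ in the $\mathcal{S}_{p'}$-slot and against $w' \in \mathcal{S}_p[W']$ in the $W$-slot does assemble into $\sum_{ijkl} a^{ij}_{kl}\pair{w'_{kl}}{Tv_{ij}}$ and that the double series converges (absolutely, by Hölder-type estimates coming from the $\mathcal{S}_p$/$\mathcal{S}_{p'}$ duality combined with the norm formula from part $(a)$). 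Granting this, well-definedness and $\tau_p$-continuity of $\mathscr{D}_p(v,a,w')$ is immediate from Proposition \ref{prop-dual-of-tau_p}(a).

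For surjectivity of $\mathscr{D}_p$ when $W$ has the \ORNP{p}: by Proposition \ref{prop-dual-of-tau_p}(b), every $\varphi \in (\CB(V,W),\tau_p)'$ is of the form $\varphi(T) = \pair{\omega}{(Tv_{ij})}$ for some $v \in \mathcal{S}_p[V]$ and $\omega \in \mathcal{S}_{p'} \widehat\otimes_{\proj} W'$. Now apply the surjectivity half of part $(a)$ — with $\mathcal{S}_{p'}$ in place of $V$ and $W'$ in place of $W$, using $(\mathcal{S}_{p'})' = \mathcal{S}_p$ so the roles of $p$ and $p'$ swap correctly — to write $\omega = \mathscr{C}_{p'}(a, w')$ for some $a \in \mathcal{S}_{p'}[\mathcal{S}_{p'}]$ and $w' \in \mathcal{S}_p[W']$. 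Then $\varphi = \mathscr{D}_p(v,a,w')$ by the factorization established above, which proves surjectivity.

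I expect the main obstacle to be the careful handling of the infinite sums and the interchange of the two pairings in part $(b)$: making precise exactly in which space the double series $\sum_{ijkl} a^{ij}_{kl}\pair{w'_{kl}}{Tv_{ij}}$ converges and why, and verifying that ``pair in the $\mathcal{S}_{p'}$-variable first'' agrees with ``assemble into $\omega = \mathscr{C}_{p'}(a,w')$ and then pair against $(Tv_{ij})$''. This is essentially a Fubini argument for the iterated tensor contractions, and the cleanest route is probably to prove everything first at the finite level $\mathcal{S}_{p'}^n[\mathcal{S}_{p'}^n]$ etc. (where all sums are finite and \cite[Prop. 1.15]{Pisier-Asterisque} applies directly), and then pass to the limit using the uniform norm bounds from part $(a)$, exactly as in the well-definedness argument for $\mathscr{C}_p$.
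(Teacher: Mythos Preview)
Your proposal is correct and follows essentially the same route as the paper. For part $(a)$ the paper also reduces to \cite[Prop. 1.15]{Pisier-Asterisque} on finite blocks and passes to the limit; for the surjectivity/norm equality the paper makes your ``gluing'' explicit by writing $u = \sum_j u_j$ absolutely convergent in $V\widehat\otimes_{\proj}W$, applying the finite result to each $u_j$, rescaling so that $\sum_j \n{v_j}\n{w_j} = (\sum_j \n{v_j}^p)^{1/p}(\sum_j \n{w_j}^{p'})^{1/p'}$, and then invoking \cite[Cor. 1.3]{Pisier-Asterisque} to assemble the block-diagonal matrices into elements of $\mathcal{S}_p[V]$ and $\mathcal{S}_{p'}[W]$. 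For part $(b)$ the paper does exactly your factorization $\mathscr{D}_p(v,a,w') = \mathscr{E}(v,\mathscr{C}(a,w'))$ and handles the Fubini issue as you anticipate, by computing on finitely supported matrices and passing to the limit via the joint contractivity of the maps; one small wording point is that the paper phrases the second contraction as a ``transposed version'' of part $(a)$ rather than as $\mathscr{C}_{p'}$, which sidesteps the formal issue that $p=\infty$ gives $p'=1$ outside the stated range $1<p\le\infty$.
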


\begin{proof}
$(a)$
For finitely supported matrices $v \in \mathcal{S}_p[V]$ and $w \in \mathcal{S}_{p'}[W]$, \cite[Prop. 1.15]{Pisier-Asterisque} says precisely that $\n{ \mathscr{C}_p(u,v) } \le \n{v}_{S_p[V]} \n{w}_{\mathcal{S}_{p'}[W]}$.
Approximating arbitrary $v \in \mathcal{S}_p[V]$ and $w \in \mathcal{S}_{p'}[W]$ using finitely supported matrices, we obtain the same inequality (and also that the limit defining $\mathscr{C}_p$ exists).
Thus, for any $u \in V \widehat\otimes_{\proj} W$ we have that $\n{u}_{\proj}$ is less than or equal to the infimum in the statement.
Now, since $V \otimes W$ is dense in $V \widehat\otimes_{\proj} W$ we can write $u$ as an absolutely convergent series $\sum_j u_j$  of elements  $u_j \in V \otimes W$, and with $\sum_j \n{u_j}_{\proj}$ arbitrarily close to  $\n{u}_{\proj}$. 
For each $u_j$, use \cite[Prop. 1.15]{Pisier-Asterisque} to approximate $\n{u_j}_{\proj}$ by $\n{v_j}_{\mathcal{S}_p^{n_j}[V]} \n{w_j}_{\mathcal{S}_{p'}^{n_j}[W]}$ where $u_j = \mathscr{C}_p(v_j,w_j)$. By rescaling (that is, replacing $v_j$ and $w_j$ by $\alpha_j v_j$ and $\alpha_j^{-1} w_j$, respectively), we can assume (with the obvious modification in the case $p=\infty$)
\[
\sum_{j=1}^\infty \n{v_j}_{\mathcal{S}_p^{n_j}[V]} \n{w_j}_{\mathcal{S}_{p'}^{n_j}[W]} = \left( \sum_{j=1}^\infty \n{v_j}_{\mathcal{S}_p^{n_j}[V]}^p \right)^{1/p} \left( \sum_{j=1}^\infty \n{w_j}_{\mathcal{S}_{p'}^{n_j}[W]}^{p'} \right)^{1/p'}.
\]
Then by \cite[Cor. 1.3]{Pisier-Asterisque} the infinite block-diagonal matrix $v$ (resp. $w$) with the $v_j$'s (resp. the $w_j$'s) is in $\mathcal{S}_p[V]$ (resp. in $\mathcal{S}_{p'}[W]$) and $u = \mathscr{C}_p(v,w)$. Moreover, $\n{v}_{\mathcal{S}_p[V]} \n{w}_{\mathcal{S}_{p'}[W]}$ is arbitrarily close to $\sum_j \n{u_j}_{\proj}$ which in turn is arbitrarily close to $\n{u}_{\proj}$.


$(b)$ From the discussion preceding the statement of the proposition, we have a bilinear surjection
\begin{equation}\label{prop-double-tensor-contraction-eqn-1}
\mathscr{E} : \mathcal{S}_p[V] \times (\mathcal{S}_{p'} \widehat\otimes_{\proj} W') \to (\CB(V,W),\tau_p)'    
\end{equation}
induced by the tensor contraction,
and by part $(a)$ (more precisely, a transposed version of it) we have a jointly contractive bilinear surjection
\begin{equation}\label{prop-double-tensor-contraction-eqn-2}
\mathscr{C} : \mathcal{S}_{p'}[\mathcal{S}_{p'}] \times \mathcal{S}_{p}[W'] \to \mathcal{S}_{p'} \widehat\otimes_{\proj} W'
\end{equation}
which is also induced by the tensor contraction (note we have not used subindices for these bilinear maps to keep the notation simpler, but they certainly depend on $p$).
The aforementioned two maps then naturally define a trilinear surjective map
\[
\mathscr{D} : \mathcal{S}_p[V] \times \mathcal{S}_{p'}[\mathcal{S}_{p'}] \times \mathcal{S}_{p}[W'] \to (\CB(V,W),\tau_p)', 
\]
which we understand as a double tensor contraction, and is given by
\[
\mathscr{D}(v, a, w')(T) = \mathscr{E}(v,\mathscr{C}(a,w') )(T), \qquad T \in \CB(V,W)
\]
whenever $v=(v_{ij})_{ij} \in \mathcal{S}_p[V]$, $a=\big( (a^{ij}_{kl})_{kl} \big)_{ij} \in \mathcal{S}_{p'}[\mathcal{S}_{p'}]$, $w'=(w'_{kl}) \in \mathcal{S}_{p}[W']$.
Let us assume that $v$, $a$, and $w'$ are finitely supported, that is,
\[
v = \sum_{i,j=1}^N \varepsilon_{ij} \otimes v_{ij}, \quad a = \sum_{i,j,k,l=1}^N a^{ij}_{kl} \varepsilon_{ij}
 \otimes \varepsilon_{kl}, \quad w' = \sum_{k,l=1}^N \varepsilon_{kl} \otimes w'_{kl}\]
where $\varepsilon_{ij}$ are the matrix units. It is then clear that $\mathscr{C}(a,w') = \sum_{i,j=1}^N a^{ij}_{kl} \varepsilon_{ij} \otimes w'_{kl}$, and thus
\[
\mathscr{D}(v, a, w')(T) = \mathscr{E}(v,\mathscr{C}(a,w') )(T) = \sum_{ijkl=1}^N a^{ij}_{kl} \pair{w'_{kl}}{Tv_{ij}}, \qquad T \in \CB(V,W).
\]
For the general case, we just need to verify that things work well when we approximate $v$, $a$, and $w'$ by  their finitely supported blocks. This follows since \eqref{prop-double-tensor-contraction-eqn-2} is jointly contractive and \eqref{prop-double-tensor-contraction-eqn-1} is jointly (completely) contractive when considered as a map taking values in $V \widehat\otimes_{\proj} W' \subseteq \CB(V,W)'$, which means that $\mathscr{D}$ is jointly contractive as a trilinear map $\mathcal{S}_p[V] \times \mathcal{S}_{p'}[\mathcal{S}_{p'}] \times \mathcal{S}_{p}[W'] \to V \widehat\otimes_{\proj} W' \subseteq \CB(V,W)'$.

\end{proof}

As happened in Banach spaces (\cite[Lem. 2.6]{Choi-Kim}, \cite[Lem. 1.e.17]{Lindenstrauss-Tzafriri-I}), the adjoints of finite-rank mappings are $\tau_p$-dense in the space of finite-rank mappings on the dual. 

\begin{lemma}\label{lemma-Choi-Kim-2-6}
Let $V$ be an operator space, and define $\mathcal{F}'(V,V) = \{ T' \;:\; T\in\mathcal{F}(V,V)\} \subseteq \mathcal{F}(V',V')$. Then for each $1 \le p \le \infty$, $\mathcal{F}'(V,V)$ is $\tau_p$-dense in $\mathcal{F}(V',V')$.
\end{lemma}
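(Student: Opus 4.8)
The plan is to mimic the classical argument (\cite[Lem. 2.6]{Choi-Kim}, \cite[Lem. 1.e.17]{Lindenstrauss-Tzafriri-I}) but to phrase everything through the duality description of $\tau_p$-continuous functionals obtained in Proposition \ref{prop-dual-of-tau_p} and Proposition \ref{prop-double-tensor-contraction}. First I would fix a finite-rank $R\in\mathcal{F}(V',V')$, say $R=\sum_{k=1}^m \xi_k''\otimes v_k'$ with $v_k'\in V'$ and $\xi_k''\in V''$, together with a $\tau_p$-neighborhood of $R$ in $\mathcal{F}(V',V')$. By Proposition \ref{prop-uniform-convergence-over-p-compact} such a neighborhood is controlled by finitely many seminorms of the form $S\mapsto \n{S}_{\mathbf{K}}$ with $\mathbf{K}=\Theta^{v}(\mathbf{B}_{\mathcal{S}_{p}})$ for some $v=(v'_{ij})\in\mathcal{S}_p[V']$; equivalently, by \eqref{eqn-equality-norm-K-norm-Sp-min}, it is controlled by the quantities $\n{(Id_{\mathcal{S}_p}\otimes S)(v)}_{\mathcal{S}_p\widehat\otimes_{\min}V'}$. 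So the task reduces to: given $v\in\mathcal{S}_p[V']$ and $\varepsilon>0$, find $T\in\mathcal{F}(V,V)$ with $\n{(Id_{\mathcal{S}_p}\otimes T')(v)-(Id_{\mathcal{S}_p}\otimes R)(v)}_{\mathcal{S}_p\widehat\otimes_{\min}V'}<\varepsilon$.

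The key step is the standard reflexivity trick: approximate the finitely many vectors $v'_{ij}$ involved (more precisely, since $v\in\mathcal{S}_p[V']$ is an honest element, one truncates it to a finitely supported block at the cost of $\varepsilon/3$, leaving only finitely many entries $v'_{ij}$, $i,j\le N$) by a principle of local reflexivity / weak-star density argument. Concretely, the functionals $\xi_1'',\dots,\xi_m''\in V''$ can be $w^*$-approximated, uniformly on the finite set $\{v'_{ij}\}$, by elements $v_1,\dots,v_m\in V$; this is just Goldstine's theorem applied entry-by-entry. Setting $T=\sum_{k=1}^m v_k'\otimes v_k\in\mathcal{F}(V,V)$, one has $T'=\sum_k v_k''\otimes v_k'$ (identifying $v_k\in V$ with its image in $V''$), so $(Id_{\mathcal{S}_p}\otimes T')(v)$ and $(Id_{\mathcal{S}_p}\otimes R)(v)$ differ, entrywise, by $\sum_k \pair{v'_{ij}}{\xi_k''-v_k}\,v_k'$, a finitely supported matrix whose entries are small in norm; hence the whole difference is small in $\mathcal{S}_p^N[V']\subseteq\mathcal{S}_p\widehat\otimes_{\min}V'$, using that on a fixed $N\times N$ block the norm is dominated by a fixed multiple of the sum of the norms of the entries. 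Combining with the $\varepsilon/3$ truncation error completes the estimate.

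The main obstacle I anticipate is not the weak-star density itself but the bookkeeping needed to pass from ``$v'_{ij}$ are $w^*$-close in the finitely many scalars $\pair{v'_{ij}}{\xi_k''}$'' to a genuine norm bound on the infinite matrix $(Id_{\mathcal{S}_p}\otimes(T'-R))(v)$ in $\mathcal{S}_p\widehat\otimes_{\min}V'$. One must be careful that $(Id_{\mathcal{S}_p}\otimes R)(v)$ and $(Id_{\mathcal{S}_p}\otimes T')(v)$ genuinely lie in (the image of) $\mathcal{S}_p[V']$ and that the truncation is legitimate there — this is where Lemma \ref{lemma-Theta-well-defined} and \cite[Cor. 1.2]{Pisier-Asterisque} get used, exactly as in the proof of Proposition \ref{prop-dual-of-tau_p}. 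Once the difference is known to be a \emph{finitely supported} matrix with controlled entries, the estimate $\n{\cdot}_{\mathcal{S}_p^N\widehat\otimes_{\min}V'}\le N\max_{i,j}\n{\cdot}_{V'}$ (or a cleaner bound via property (b) of a mapping ideal / the triangle inequality on $\mathcal{S}_p^N\widehat\otimes_{\min}V'$) closes the argument, and since we only needed the $n=1$ case of the $\tau_p$-seminorms (cf. the remark after \eqref{eqn-approx-of-identity}) this handles the general $\tau_p$-neighborhood as well.
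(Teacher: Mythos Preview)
Your approach is correct in spirit, but it takes a more laborious route than the paper and also has one loose end worth flagging.

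The paper's proof is a three-line argument: it suffices to approximate a rank-one map $v''\otimes v'\in\mathcal{F}(V',V')$. Goldstine gives a \emph{bounded} net $(v_\alpha)\subset V$ with $v_\alpha\to v''$ in the $w^*$-topology, so $(\iota_V v_\alpha\otimes v')$ is a bounded net in $\mathcal{F}'(V,V)$ converging to $v''\otimes v'$ in the point-norm topology. Now Proposition~\ref{proposition-bounded-net} does all the work: for bounded nets, point-norm convergence coincides with $\tau_p$-convergence. This completely sidesteps the truncation-and-entrywise-estimate machinery you set up; in particular, there is no need to reduce to finitely supported matrices or to control the $\mathcal{S}_p^N\widehat\otimes_{\min}V'$-norm by entries.

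Your argument is essentially the content of Proposition~\ref{proposition-bounded-net} unpacked by hand for this special situation, so both proofs rely on the same idea (bounded Goldstine approximants plus a compactness-type reduction to finite data). The paper's version is shorter and more conceptual; yours is more self-contained but noticeably heavier.

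The loose end: when you say ``truncate $v$ to a finitely supported block at the cost of $\varepsilon/3$'', you are tacitly using that both $Id_{\mathcal{S}_p}\otimes R$ and $Id_{\mathcal{S}_p}\otimes T'$ have uniformly controlled $\cb$-norm, so that the truncation error propagates. For $R$ this is automatic, but $T'$ is constructed \emph{after} the truncation and depends on the Goldstine approximants $v_k$. You need to fix a bound $\|T'\|_{\cb}\le C$ a priori (take $\|v_k\|\le\|\xi_k''\|$, so $C=\sum_k\|\xi_k''\|\,\|v_k'\|$ works), and choose $N$ using this $C$ before picking the $v_k$'s. This is easy to arrange but should be said explicitly; otherwise the $\varepsilon/3$ accounting does not close. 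Also note the minor typo $\mathbf{B}_{\mathcal{S}_p}$ where $\mathbf{B}_{\mathcal{S}_{p'}}$ is intended.
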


\begin{proof}
Let $v'' \in V''$ and $v' \in V'$. By Goldstine's theorem, there is a bounded net $(v_\alpha)$ in $V$ which converges to $v''$ in the weak$^*$ topology.
Therefore, $v_\alpha \otimes v'$ is a bounded net in $\mathcal{F}'(V,V)$ which converges to $v'' \otimes v' \in \mathcal{F}(V,V)$ in the point-norm topology.
By Proposition \ref{proposition-bounded-net}, this implies $\tau_p$-convergence.
Since every mapping in $\mathcal{F}(V',V')$ is a finite sum of mappings of the form $v'' \otimes v'$, the conclusion follows.
\end{proof}

We are now in condition to prove a transference result of the $p$-OAP from the dual to the space under the $p$-tensor surjectivity property.
 Note that from Proposition \ref{proposition-examples-p-ORNP}, the case $p=\infty$ is precisely the known result that OAP always passes from the dual to the space without needing additional assumptions \cite[Cor. 11.2.6]{Effros-Ruan-book}.

\begin{theorem}\label{thm-transfer-p-OAP-from-dual}
Let $1<p\le \infty$, and suppose $V$ has the \ORNP{p}.
If $V'$ has the $p$-OAP, then so does $V$.   
\end{theorem}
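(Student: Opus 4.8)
The plan is to follow the classical transference argument (Choi–Kim \cite[Thm. 2.7]{Choi-Kim}) in its operator-space incarnation, using a duality/bipolar argument: to show $V$ has the $p$-OAP I will show that $\id_V$ lies in the $\tau_p$-closure of $\mathcal{F}(V,V)$ inside $\CB(V,V)$, and for this it suffices, by Hahn–Banach, to check that every $\tau_p$-continuous functional $\varphi$ on $\CB(V,V)$ that annihilates $\mathcal{F}(V,V)$ also annihilates $\id_V$. Here is where the two hypotheses enter. First, since $V$ has the \ORNP{p}, Proposition \ref{prop-double-tensor-contraction}(b) gives that such a $\varphi$ can be written as $\varphi = \mathscr{D}_p(v,a,w')$ for some $v=(v_{ij})\in\mathcal{S}_p[V]$, $a=\big((a^{ij}_{kl})_{kl}\big)_{ij}\in\mathcal{S}_{p'}[\mathcal{S}_{p'}]$, and $w'=(w'_{kl})\in\mathcal{S}_p[V']$, so that $\varphi(T)=\sum_{ijkl}a^{ij}_{kl}\pair{w'_{kl}}{Tv_{ij}}$ for all $T\in\CB(V,V)$.

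The idea is now to reinterpret $\varphi$ as a $\tau_p$-continuous functional on $\CB(V',V')$ evaluated at adjoints: using the duality pairing $\pair{w'_{kl}}{Tv_{ij}}=\pair{T'w'_{kl}}{\iota_V v_{ij}}$, the same data $(v,a,w')$ — after swapping the roles of the ``ket'' and ``bra'' slots — produces a $\tau_p$-continuous functional $\widetilde\varphi$ on $\CB(V',V')$ (again via Proposition \ref{prop-dual-of-tau_p}(a)/Proposition \ref{prop-double-tensor-contraction}(b), which only need the source/target to be operator spaces), satisfying $\widetilde\varphi(S)=\sum_{ijkl}a^{ij}_{kl}\pair{Sw'_{kl}}{\iota_V v_{ij}}$ and hence $\varphi(T)=\widetilde\varphi(T')$ for every $T\in\CB(V,V)$. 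Because $\varphi$ annihilates $\mathcal{F}(V,V)$, we get that $\widetilde\varphi$ annihilates $\mathcal{F}'(V,V)=\{T':T\in\mathcal{F}(V,V)\}$; by Lemma \ref{lemma-Choi-Kim-2-6} this set is $\tau_p$-dense in $\mathcal{F}(V',V')$, so $\widetilde\varphi$ annihilates all of $\mathcal{F}(V',V')$. Since $V'$ has the $p$-OAP, $\mathcal{F}(V',V')$ is $\tau_p$-dense in $\CB(V',V')$ (Proposition \ref{prop-p-OAP-density-in-CB}), whence $\widetilde\varphi\equiv 0$ on $\CB(V',V')$; in particular $\varphi(\id_V)=\widetilde\varphi(\id_{V'})=0$, which is exactly what we needed.

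The main obstacle — and the step deserving the most care — is the second paragraph: producing the functional $\widetilde\varphi$ on $\CB(V',V')$ from the representation of $\varphi$, and verifying it is genuinely $\tau_p$-continuous \emph{there}, i.e. that the infinite matrix data $v$, $a$, $w'$ can be regrouped/transposed so that the relevant element lands in $\mathcal{S}_p[V']$ (for the functional's ``vector'' slot, using that $w'\in\mathcal{S}_p[V']$) paired against something in $\mathcal{S}_{p'}\widehat\otimes_{\proj}V''$ coming from $v$ and $a$; this uses $\iota_V:V\hookrightarrow V''$ completely isometrically together with the tensor-contraction identities of Proposition \ref{prop-double-tensor-contraction}, and one must be careful that everything is stated for $1<p\le\infty$ (so that the duality $\mathcal{S}_p[V]'=\mathcal{S}_{p'}[V']$ and the identifications in Proposition \ref{prop-dual-of-tau_p} are available). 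Once the correspondence $\varphi\leftrightarrow\widetilde\varphi$ with $\varphi(T)=\widetilde\varphi(T')$ is set up cleanly, the remaining steps (annihilation of $\mathcal{F}'(V,V)$, density via Lemma \ref{lemma-Choi-Kim-2-6}, $p$-OAP of $V'$, evaluation at the identity) are routine.
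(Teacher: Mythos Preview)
Your proposal is correct and follows essentially the same route as the paper's proof: a Hahn--Banach argument, representation of $\varphi$ via $\mathscr{D}_p$ using the \ORNP{p}, construction of the companion functional $\widetilde\varphi$ on $\CB(V',V')$ with $\varphi(T)=\widetilde\varphi(T')$, and then Lemma \ref{lemma-Choi-Kim-2-6} together with the $p$-OAP of $V'$. The one point you flag as delicate---the transposition/regrouping of the data so that $\widetilde\varphi$ is genuinely $\tau_p$-continuous---is handled in the paper exactly as you anticipate: one sets $\widetilde\varphi=\mathscr{D}_p(w',a^t,\iota_V v)$, using that $\iota_V v\in\mathcal{S}_p[V'']$ (since $\iota_V$ is a complete isometry) and, crucially, the Fubini theorem for Schatten spaces \cite[Thm.~1.9]{Pisier-Asterisque} to ensure that the transposed matrix $a^t=\big((a^{ij}_{kl})_{ij}\big)_{kl}$ still lies in $\mathcal{S}_{p'}[\mathcal{S}_{p'}]$.
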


\begin{proof}
It suffices to show that if $\varphi \in (\CB(V,V),\tau_p)'$ and $\varphi(T)=0$ for all $T \in \mathcal{F}(V,V)$, then $\varphi(Id_V) = 0$.

By Proposition \ref{prop-double-tensor-contraction}, since $V$ has the \ORNP{p} there exist
$v=(v_{ij})_{ij} \in \mathcal{S}_p[V]$, $a=\big( (a^{ij}_{kl})_{kl} \big)_{ij} \in \mathcal{S}_{p'}[\mathcal{S}_{p'}]$ and $v'=(v'_{kl}) \in \mathcal{S}_{p}[V']$ such that $\varphi=\mathscr{D}_p(v,a,v')$.
Using the Fubini theorem for Schatten spaces \cite[Thm. 1.9]{Pisier-Asterisque}, $a^t = \big( (a^{ij}_{kl})_{ij} \big)_{kl} \in \mathcal{S}_{p'}[\mathcal{S}_{p'}]$.
Since $\iota_V : V \to V''$ is a complete isometry so is $Id_{\mathcal{S}_p}\otimes \iota_V : \mathcal{S}_p[V] \to \mathcal{S}_p[V'']$ \cite[Cor. 1.2]{Pisier-Asterisque}. Then, $\iota_Vv=(\iota_V v_{ij}) \in \mathcal{S}_p[V'']$ and thus by Proposition \ref{prop-double-tensor-contraction} we can define a functional $\psi \in (\CB(V',V'),\tau_p)'$ as $\psi=\mathscr{D}_p( v', a^t, \iota_Vv )$.
Checking the formulas, one readily verifies that for all $T \in \CB(V,V)$ we have $\psi(T')=\varphi(T)$.
Therefore, by our assumption it follows that $\psi(T')=0$ for all $T\in \mathcal{F}(V,V)$.

Using that $V'$ has the $p$-OAP, we know from Proposition \ref{prop-p-OAP-density-in-CB} that $\mathcal{F}(V',V')$ is $\tau_p$-dense in $\CB(V',V')$.
By Lemma \ref{lemma-Choi-Kim-2-6}, we in fact have that $\mathcal{F}'(V,V)$ is $\tau_p$-dense in $\CB(V',V')$. Since $\psi$ vanishes on $\mathcal{F}'(V,V)$ and is $\tau_p$-continuous, $\psi$ must vanish on all of $\CB(V',V')$ and thus $0 = \psi(Id_{V'}) = \varphi(Id_V)$, which completes the proof.
\end{proof}

The AP for a Banach space $X$ is well-known to be equivalent to the following: whenever $(x_n)$ and $(x_n')$ are sequences in $X$ and $X'$, respectively, satisfying that $\sum_{n=1}^\infty \n{x_n}\n{x_n'}<\infty$ and $\sum_{n=1}^\infty \pair{x_n'}{x}x_n=0$ for all $x\in X$, we have $\sum_{n=1}^\infty \pair{x_n'}{x_n} = 0$ \cite[Thm. 1.e.4]{Lindenstrauss-Tzafriri-I}. This is typically called a ``trace condition'', because it means that the trace is well defined on the image of the canonical map $X \widehat\otimes_{\pi} X' \to X \widehat\otimes_{\varepsilon} X' \subseteq \mathcal{L}(X,X)$. Other trace conditions have been found to characterize the $p$-AP for Banach spaces \cite[Prop. 3.1]{Delgado-Oja-Pineiro-Serrano}, and also the OAP for operator spaces \cite[Thm. 11.2.5, condition (iv)]{Effros-Ruan-book}.
By adjusting the arguments of \cite[Prop. 3.1]{Delgado-Oja-Pineiro-Serrano}
to the operator space setting in the manner of \cite[Thm. 11.2.5]{Effros-Ruan-book}, Proposition \ref{prop-double-tensor-contraction} gives the trace condition below for the $p$-OAP (under the additional assumption of the \ORNP{p}).

\begin{corollary}\label{cor-trace-condition}
Let $1<p\le\infty$ and let $V$ be an operator space with the \ORNP{p}. The following are equivalent:
\begin{enumerate}[(i)]
\item $V$ has the $p$-OAP.
\item Whenever $(v_{ij})_{ij} \in \mathcal{S}_p[V]$, $\big( (a^{ij}_{kl})_{kl} \big)_{ij} \in \mathcal{S}_{p'}[\mathcal{S}_{p'}]$, and $(v'_{kl}) \in \mathcal{S}_{p}[V']$ satisfy that $\sum_{ijkl} a^{ij}_{kl} \pair{v'_{kl}}{u} v_{ij} = 0$ for all $u \in V$ , we have $\sum_{ijkl} a^{ij}_{kl} \pair{v'_{kl}}{v_{ij}} = 0$.
\end{enumerate}
\end{corollary}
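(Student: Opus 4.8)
The plan is to run the standard Hahn--Banach argument relating approximation properties to trace conditions (as in \cite[Prop. 3.1]{Delgado-Oja-Pineiro-Serrano} and \cite[Thm. 11.2.5]{Effros-Ruan-book}), using Proposition \ref{prop-double-tensor-contraction} to describe $(\CB(V,V),\tau_p)'$ concretely. Recall that, by definition, $V$ has the $p$-OAP if and only if $Id_V$ belongs to the $\tau_p$-closure of the linear subspace $\mathcal{F}(V,V) \subseteq \CB(V,V)$ (equivalently, by Proposition \ref{prop-p-OAP-density-in-CB}, $\mathcal{F}(V,V)$ is $\tau_p$-dense in $\CB(V,V)$). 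Since $\tau_p$ is locally convex, the Hahn--Banach theorem shows that this holds precisely when every $\varphi \in (\CB(V,V),\tau_p)'$ with $\varphi|_{\mathcal{F}(V,V)}=0$ also satisfies $\varphi(Id_V)=0$. The entire proof then reduces to translating this last statement into condition (ii).

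First I would set up the dictionary. Given data $v=(v_{ij})_{ij} \in \mathcal{S}_p[V]$, $a=\big((a^{ij}_{kl})_{kl}\big)_{ij} \in \mathcal{S}_{p'}[\mathcal{S}_{p'}]$ and $v'=(v'_{kl}) \in \mathcal{S}_p[V']$, form $\varphi = \mathscr{D}_p(v,a,v') \in (\CB(V,V),\tau_p)'$, so that $\varphi(T) = \sum_{ijkl} a^{ij}_{kl}\pair{v'_{kl}}{Tv_{ij}}$ for all $T \in \CB(V,V)$; this is well defined by Proposition \ref{prop-double-tensor-contraction}(b), \emph{without} using the \ORNP{p}. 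Two evaluations are then immediate. Testing $\varphi$ at $Id_V$ gives $\varphi(Id_V) = \sum_{ijkl} a^{ij}_{kl}\pair{v'_{kl}}{v_{ij}}$, the quantity in the conclusion of (ii). Testing $\varphi$ at a rank-one map $\theta \otimes u$ (with $\theta \in V'$, $u \in V$, acting by $x\mapsto\pair{\theta}{x}u$) gives, after rearranging the absolutely convergent sum, $\varphi(\theta\otimes u) = \pair{\theta}{\,\sum_{ijkl} a^{ij}_{kl}\pair{v'_{kl}}{u}\,v_{ij}\,}$. Since $\mathcal{F}(V,V)$ is the linear span of such rank-one maps, $\varphi$ vanishes on $\mathcal{F}(V,V)$ if and only if $\pair{\theta}{z_u}=0$ for all $\theta\in V'$ and $u\in V$, where $z_u := \sum_{ijkl} a^{ij}_{kl}\pair{v'_{kl}}{u}\,v_{ij} \in V$; that is, if and only if $\sum_{ijkl} a^{ij}_{kl}\pair{v'_{kl}}{u}\,v_{ij}=0$ for every $u\in V$, which is exactly the hypothesis of (ii).

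With the dictionary in place the two implications follow. For (i)$\Rightarrow$(ii): given data satisfying the hypothesis of (ii), the associated $\varphi=\mathscr{D}_p(v,a,v')$ vanishes on $\mathcal{F}(V,V)$ by the above, hence $\varphi(Id_V)=0$ since $V$ has the $p$-OAP, i.e. $\sum_{ijkl} a^{ij}_{kl}\pair{v'_{kl}}{v_{ij}}=0$; note this direction does not use the \ORNP{p}. For (ii)$\Rightarrow$(i): let $\varphi\in(\CB(V,V),\tau_p)'$ vanish on $\mathcal{F}(V,V)$. Because $V$ has the \ORNP{p}, Proposition \ref{prop-double-tensor-contraction}(b) provides data $v,a,v'$ with $\varphi=\mathscr{D}_p(v,a,v')$; by the dictionary this data satisfies the hypothesis of (ii), so (ii) gives $\varphi(Id_V)=\sum_{ijkl} a^{ij}_{kl}\pair{v'_{kl}}{v_{ij}}=0$, and the Hahn--Banach reformulation yields the $p$-OAP. (Since every operator space has the \ORNP{\infty} by Proposition \ref{proposition-examples-p-ORNP}(b), the case $p=\infty$ recovers the unconditional trace characterization of the OAP.)

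I expect the only real work to be the bookkeeping in the dictionary step: checking that $z_u=\sum_{ijkl} a^{ij}_{kl}\pair{v'_{kl}}{u}\,v_{ij}$ is a genuine element of $V$, that the rearrangement of the quadruple sum defining $\varphi(\theta\otimes u)$ is legitimate, and that "$\varphi$ vanishes on $\mathcal{F}(V,V)$" reduces to "$\varphi$ vanishes on rank-one maps". All of this is controlled by the convergence assertions already established in Proposition \ref{prop-double-tensor-contraction}(a)--(b), so no genuinely new difficulty should arise; the role of the \ORNP{p} is confined to the surjectivity of $\mathscr{D}_p$ used in (ii)$\Rightarrow$(i).
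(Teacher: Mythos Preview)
Your proposal is correct and follows essentially the same Hahn--Banach argument as the paper's own proof: both directions proceed by testing $\varphi=\mathscr{D}_p(v,a,v')$ on rank-one maps to identify the hypothesis of (ii) with $\varphi|_{\mathcal{F}(V,V)}=0$, and on $Id_V$ to identify the conclusion of (ii) with $\varphi(Id_V)=0$, invoking the \ORNP{p} (via the surjectivity in Proposition \ref{prop-double-tensor-contraction}(b)) only for (ii)$\Rightarrow$(i). Your observation that (i)$\Rightarrow$(ii) does not require the \ORNP{p} is also explicitly noted in the paper.
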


\begin{proof}
 $(i) \Rightarrow (ii)$: Suppose that $V$ has the $p$-OAP, and let $v = (v_{ij})_{ij}$, $a = \big((a^{ij}_{kl})_{kl} \big)_{ij}$, and $v' = (v'_{kl})$ satisfy the assumption in part (ii).
 It follows that for every $w' \in V'$ and every $u \in V$ we have
 \[
 0 = w'(0) = w'\Big( \sum_{ijkl} a^{ij}_{kl} \pair{v'_{kl}}{u} v_{ij} \Big) = \sum_{ijkl} a^{ij}_{kl} \pair{v'_{kl}}{w'(v_{ij})u} = \mathscr{D}_p(v,a,v')(w' \otimes u),
 \]
 where $\mathscr{D}_p$ is the map defined in Proposition \ref{prop-double-tensor-contraction}.
 Therefore $\mathscr{D}_p(v,a,v')$ is a $\tau_p$-continuous functional on $\CB(V,V)$ that vanishes on $\mathcal{F}(V,V)$. Since $V$ has the $p$-OAP this implies  $\mathscr{D}_p(v,a,v')(Id_V)=0$, which is precisely the desired conclusion. 

$(ii) \Rightarrow (i)$:
Let $\varphi : \CB(V,V) \to \C$ be a $\tau_p$-continuous linear functional vanishing on $\mathcal{F}(V,V)$. Since $V$ has the \ORNP{p}, by Proposition \ref{prop-double-tensor-contraction} there exist $v=(v_{ij})_{ij} \in \mathcal{S}_p[V]$, $a=\big( (a^{ij}_{kl})_{kl} \big)_{ij} \in \mathcal{S}_{p'}[\mathcal{S}_{p'}]$, and $v'=(v'_{kl}) \in \mathcal{S}_{p}[V']$ such that for any $T \in \CB(V,V)$ we have $\varphi(T) = \mathscr{D}_p(v,a,v')(T)$.
Analogous calculations as in the first part of the proof show that the condition of $\varphi$ vanishing on $\mathcal{F}(V,V)$ implies that the assumption in condition (ii) is satisfied, which allows us to conclude $\sum_{ijkl} a^{ij}_{kl} \pair{v'_{kl}}{v_{ij}} = 0$, that is, $0 = \mathscr{D}_p(v,a,v')(Id_V) = \varphi(Id_V)$.
By Hahn-Banach we conclude $Id_V$ belongs to the $\tau_p$-closure of $\mathcal{F}(V,V)$, that is, $V$ has the $p$-OAP.
\end{proof}

We remark that the corollary above is indeed a trace condition, in the sense that part $(ii)$ is equivalent to having the trace being well defined on a certain class of mappings: those in the range of the composition of the canonical contractions below  
\[
\mathcal{S}_p[V] \times (\mathcal{S}_{p'} \widehat\otimes_{\proj} V') \to V \widehat\otimes_{\proj} V' \rightarrow V \widehat\otimes_{\min} V' \subseteq \CB(V,V).
\]
The corresponding class of mappings in the Banach space setting is described in \cite[Paragraph after Prop. 3.1]{Delgado-Oja-Pineiro-Serrano}.

We also remark that the implication (i) $\Rightarrow$ (ii) in Corollary \ref{cor-trace-condition} always holds, regardless of whether or not $V$ has the \ORNP{p}.

\newcommand{\etalchar}[1]{$^{#1}$}
\providecommand{\bysame}{\leavevmode\hbox to3em{\hrulefill}\thinspace}
\providecommand{\MR}{\relax\ifhmode\unskip\space\fi MR }
\providecommand{\MRhref}[2]{%
  \href{http://www.ams.org/mathscinet-getitem?mr=#1}{#2}
}
\providecommand{\href}[2]{#2}

\end{document}